\documentclass{amsart}

\usepackage{amssymb}
\usepackage{amsthm}
\usepackage{enumitem}

\pagestyle{plain}
\raggedbottom
\pagenumbering{arabic}

\setlength{\textwidth}{14.5cm} 
\setlength{\textheight}{23.15cm} 
\setlength{\oddsidemargin}{1cm}
\setlength{\evensidemargin}{1cm}
\setlength{\topmargin}{0.5cm} 
\setlength{\headheight}{0cm} 
\setlength{\headsep}{0cm}    
\setlength{\footskip}{2cm}

\hyphenation{ho-mo-mor-phism}
\hyphenation{ho-mo-mor-phisms}
\hyphenation{endo-mor-phism}
\hyphenation{endo-mor-phisms}

\theoremstyle{plain}\newtheorem{Theorem}{Theorem}[section]
\theoremstyle{plain}\newtheorem{Conjecture}[Theorem]{Conjecture}

\theoremstyle{plain}\newtheorem{Corollary}[Theorem]{Corollary}
\theoremstyle{plain}\newtheorem{Lemma}[Theorem]{Lemma}
\theoremstyle{plain}
\theoremstyle{definition}
\theoremstyle{definition}
\theoremstyle{definition}
\theoremstyle{definition}\newtheorem{Remark}[Theorem]{Remark}
\theoremstyle{definition}\newtheorem{Notation}[Theorem]{Notation}
\theoremstyle{definition}\newtheorem{Hypothesis}[Theorem]{Hypothesis}
\theoremstyle{definition}
\theoremstyle{definition}\newtheorem{Strategy}[Theorem]{Strategy}
\theoremstyle{definition}\newtheorem{Contents}[Theorem]{Contents}
\theoremstyle{definition}\newtheorem{Computations}[Theorem]{Computations}
\theoremstyle{definition}\newtheorem{Equivalences}[Theorem]{Equivalences}

\def\C{{\mathbb C}}

\def\Q{{\mathbb Q}}

\def\dim{\mathrm{dim}}           
           
\def\Ext{\mathrm{Ext}}           

\def\Hom{\mathrm{Hom}}

\def\Im{\mathrm{Im}}

\def\mod{\mathrm{mod}}
\def\rad{\mathrm{rad}}

\def\soc{\mathrm{soc}}

\newcommand{\SL}{\operatorname{SL}}

\newcommand{\GL}{\operatorname{GL}}

\begin{document}

Revised Version of 11 September 2011

\bigskip
\bigskip

\begin{center}
{\LARGE\bf  Brou{\'e}'s abelian defect group conjecture holds\\ 
\vspace*{0.3em} 
            for the sporadic simple Conway group {\sf Co}$_3$}
\end{center}

\bigskip

\begin{center}
   {\large
        {\bf Shigeo Koshitani}$^{\text a,*}$, 
        {\bf J{\"u}rgen M{\"u}ller}$^{\text b}$,
         {\bf Felix Noeske}$^{\text c}$  
   }
\end{center}

\bigskip

\begin{center}
  {\it
${^{\mathrm{a}}}$Department of Mathematics, Graduate School of Science, \\
Chiba University, Chiba, 263-8522, Japan \\
${^{\mathrm{b, \, c}}}$Lehrstuhl D f{\"u}r Mathematik,
RWTH Aachen University, 52062, Aachen, Germany}
\end{center}

\footnote
{
$^*$ Corresponding author. \\
\indent {\it E-mail addresses:} koshitan@math.s.chiba-u.ac.jp (S.Koshitani),
\\
juergen.mueller@math.rwth-aachen.de (J.M{\"u}ller),
Felix.Noeske@math.rwth-aachen.de (F.Noeske).
}

\hrule 

\smallskip
{\small \noindent{\bf Abstract}

\smallskip\noindent
In the representation theory of finite groups, there is a well-known and
important conjecture due to M.~Brou{\'e}. He conjectures that, for
any prime $p$, if a $p$-block $A$ of a finite group $G$ has an abelian
defect group $P$, then $A$ and its Brauer corresponding block $A_N$
of the normaliser $N_G(P)$ of $P$ in $G$ are derived equivalent
(Rickard equivalent).
This conjecture is called 
{\sl Strong Version of \sl Brou{\'e}'s Abelian Defect Group Conjecture}.
In this paper, we prove that the strong version of 
Brou{\'e}'s abelian defect group conjecture
is true for the non-principal $2$-block $A$ with an elementary abelian
defect group $P$ of order $8$ of the sporadic simple Conway 
group {\sf Co}$_3$. This result completes the verification of the strong
version of Brou{\'e}'s abelian defect group conjecture for all primes $p$
and for all $p$-blocks 
of {\sf Co}$_3$.

\smallskip\noindent
{\it Keywords:} Brou{\'e}'s conjecture; abelian defect group;
sporadic simple Conway group}

\medskip \hrule

\bigskip

\section{Introduction and notation}\label{intro}

\noindent
In the representation theory of finite groups, one of the
most important and interesting problems
is to give an affirmative answer to a conjecture
which was introduced by Brou{\'e} around 1988
\cite{Broue1990},
and is nowadays called {\sl Brou{\'e}'s Abelian
Defect Group Conjecture}.
He actually conjectures the following:

\begin{Conjecture} 
[Strong version of Brou\'e's Abelian Defect Group Conjecture
 \cite
{Broue1990}, 
 \cite
{KoenigZimmermann}]
\label{ADGC}
Let $p$ be a prime, and let $(\mathcal K, \mathcal O, k)$ be a
splitting $p$-modular system for all subgroups of a
finite group $G$. Assume that $A$ is a block algebra of
$\mathcal OG$ with a defect group $P$ and that $A_N$ is a
block algebra of $\mathcal ON_G(P)$ such that $A_N$ is the
Brauer correspondent of $A$, where $N_G(P)$ is the normaliser of
$P$ in $G$. Then $A$ and $A_N$ should be derived equivalent
(Rickard equivalent) provided $P$ is abelian.
\end{Conjecture}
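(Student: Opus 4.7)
The final statement is Brou\'e's Abelian Defect Group Conjecture in its full generality, which has been open since its introduction around 1988 and remains one of the central unresolved problems of the modular representation theory of finite groups. I do not have a proof; what I can sketch is only the two-stage attack that the community has pursued, an instance of which the present paper completes for one particular block of $\mathsf{Co}_3$.

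\medskip
\noindent\textbf{Stage 1 (reduction to quasi-simple groups).} First I would try to reduce the general statement to the case when $G$ is quasi-simple. The tools are Clifford-theoretic: the Fong--Reynolds correspondence, which lets one assume the central character on the $p'$-part of $Z(G)$ is faithful; the good behaviour of (splendid) derived equivalences under central extensions, direct products and normal subgroups; and partial reduction theorems in the spirit of Marcus, K\"ulshammer--Puig, and more recent work on $p$-permutation equivalences. A complete reduction of derived equivalence to the quasi-simple case is not yet available in the literature, and this is already the first serious obstacle.

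\medskip
\noindent\textbf{Stage 2 (case-by-case verification).} Conditional on the classification of finite simple groups, one goes through the possibilities for the quasi-simple case. For alternating and symmetric groups the conjecture is a theorem of Chuang--Rouquier, proved via $\mathfrak{sl}_2$-categorification on Fock space. For finite groups of Lie type in the defining characteristic the statement is comparatively accessible. For groups of Lie type in non-defining characteristic one aims to realise the equivalence geometrically from the $\ell$-adic cohomology of Deligne--Lusztig varieties, as predicted by Brou\'e--Michel; this is the deepest and most stubbornly open part of the programme. For the $26$ sporadic simple groups and their covers one proceeds block by block: construct a candidate tilting complex, typically from $p$-permutation modules lifted from the normaliser $N_G(P)$; verify that it really is a tilting complex by computing its endomorphism algebra in the stable category; and identify this algebra with the Brauer correspondent block $A_N$. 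This is exactly the strategy the body of the present paper pursues in order to finish off the remaining block of $\mathsf{Co}_3$.

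\medskip
\noindent\textbf{Main obstacle.} The essential difficulty is the explicit construction, and control, of a splendid Rickard tilting complex realising the derived equivalence between $A$ and $A_N$. For Lie-type groups this amounts to pinning down predictions from the cohomology of Deligne--Lusztig varieties that are not available in general; for sporadic blocks it reduces to a finite but highly nontrivial computation that has to be carried out case-by-case with specialised computer-algebra tools, and the hard step is always the verification that the candidate complex has the correct endomorphism ring. In neither regime is a uniform argument in sight, which is why the present paper, like its predecessors in this series, clears one further entry on a long list rather than attacking Conjecture~\ref{ADGC} in full.
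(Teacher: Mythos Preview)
You are right that the statement is a conjecture, not a theorem: the paper does not prove it in general, and explicitly says the general conjecture is ``widely open''. So there is nothing to compare at the level of a proof of Conjecture~\ref{ADGC} itself; your refusal to offer one is the only honest answer.

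That said, your description of what ``the body of the present paper pursues'' is not quite accurate, and since you invoke it as an instance of Stage~2 it is worth correcting. The paper does \emph{not} construct a candidate tilting complex for the block $A$ of $\mathsf{Co}_3$ and compute its endomorphism ring. Instead it proceeds indirectly: using Linckelmann's gluing theorem (Lemma~\ref{gluing}) it produces a \emph{stable} equivalence of Morita type between $A$ and the block $B$ of the intermediate subgroup $H\cong R(3)\times\mathfrak S_3$, realised by the Green correspondent bimodule $\mathfrak f(A)$; it then shows, by tracking the images of the five simple $A$-modules one by one, that this stable equivalence sends simples to simples and is therefore already a Morita (indeed Puig) equivalence by Linckelmann's criterion (Lemma~\ref{StableIndecomposableMorita}). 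The splendid Rickard equivalence between $A$ and $A_N$ is then obtained by composing this Puig equivalence with Okuyama's previously known splendid Rickard equivalence for the principal $2$-block of $R(3)$. So the hard step here is not ``verifying a candidate complex has the correct endomorphism ring'' but rather determining the images $F(S_i)$ of the simples under a stable equivalence, the delicate case being $F(S_5)$, handled by the stripping-off method of Lemma~\ref{OmegaCommutesWithFStable}.
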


\noindent
In fact, a stronger conclusion than {\bf\ref{ADGC}} is expected,
namely that $A$ and $A_N$ are 
{\sl splendidly Rickard equivalent} in the sense
of Linckelmann (\cite{Linckelmann1998}, \cite{Linckelmann2001}), which
he calls {\sl splendidly derived equivalent},
see {\bf\ref{equiv}}.
Note that for principal block algebras, 
this notion coincides with the splendid equivalence 
given by Rickard in \cite{Rickard1996}.

\begin{Conjecture}
[Rickard \cite{Rickard1996}, 
\cite
{Rickard1998}]
\label{RickardConjecture}
Keeping the notation, we suppose that $P$ is abelian as in {\bf\ref{ADGC}}.
Then there should be a splendid Rickard equivalence between
the block algebras $A$ of $\mathcal OG$ and $A_N$ of $\mathcal ON_G(P)$.
\end{Conjecture}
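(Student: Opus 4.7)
The plan is to verify the conjecture for the specific block of interest, namely the non-principal $2$-block $A$ of $\mathrm{Co}_3$ with elementary abelian defect group $P\cong(\mathbb Z/2)^3$. First I would assemble the local and modular data: pin down $P$ up to conjugacy in $\mathrm{Co}_3$, compute $N:=N_G(P)$ using the $2$-local structure of $\mathrm{Co}_3$ available via the ATLAS and the GAP character table library, and identify the Brauer correspondent block $A_N$ of $\CO N$. From the $2$-modular character table I would read off the number of ordinary and Brauer irreducibles in $A$ and $A_N$, the decomposition and Cartan matrices, and the fusion system $\CF_P(G)$; these invariants determine the inertial quotient and in particular should confirm that $A$ and $A_N$ have the same number of simples, as forced by Conjecture~\ref{ADGC}.

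The core strategy is to construct an explicit splendid Rickard complex, following the now-standard approach developed by Rickard, Rouquier and Okuyama and used in previous verifications by Koshitani and collaborators. One realisation is to produce a two-term complex of $p$-permutation $(A,A_N)$-bimodules built from trivial source modules, typically Scott modules over diagonal $p$-subgroups of $G\times N$, with the candidate being the mapping cone of a canonical map between two such bimodules. A complementary route, when no single two-term complex is immediately available, is Okuyama's algorithm: starting from $A_N$, apply a finite sequence of elementary derived equivalences (mutations at carefully chosen sets of simple modules) until one reaches a symmetric algebra Morita equivalent to $A$. The composition of the resulting splendid equivalences, together with Rickard's gluing theorem, then yields the desired splendid Rickard equivalence between $A$ and $A_N$.

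To prove that a given candidate complex indeed induces a splendid Rickard equivalence, one verifies Rickard's homological criteria together with the local compatibility: the associated virtual character must define a perfect isometry in Brou\'e's sense, and the Brauer constructions at every nontrivial subgroup of $P$ must yield the corresponding local equivalences between blocks of centralisers in $G$ and in $N$. The main obstacle, and the part requiring the most effort, is the explicit identification and verification of the complex. Even with the local data in hand, tracking modules and bimodules in a non-principal block of a group of order $2^{10}\cdot 3^7\cdot 5^3\cdot 7\cdot 11\cdot 23$ is delicate; in practice it requires substantial computer-algebra support (MeatAxe, GAP, MOC) and a careful choice of a chain of $2$-subgroups along which to apply Rickard's gluing, so as to reduce the problem to tractable smaller blocks where the equivalence is already known or can be checked directly.
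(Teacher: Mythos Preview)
First, note that the statement is a conjecture, not a theorem; the paper does not prove it in general but verifies it for the non-principal $2$-block of $\mathsf{Co}_3$ (Theorem~{\bf\ref{MainTheorem}}). Your proposal correctly targets that verification, but your strategy differs substantially from the paper's and misses its central structural idea.

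You propose to construct a splendid Rickard complex directly between $A$ and $A_N$, either as a two-term complex of trivial-source bimodules or via Okuyama's mutation algorithm applied to $A_N$. The paper does neither. Instead it exploits the fact that $G$ has a maximal subgroup $H\cong R(3)\times\mathfrak S_3$ with $N_G(P)\leqslant H$, where $R(3)\cong\SL_2(8)\rtimes C_3$ is the smallest Ree group, and that the decomposition matrix of $A$ coincides with that of $B_0(kR(3))$. The paper proves that $A$ is \emph{Puig equivalent} (Morita equivalent via a single $\Delta P$-projective trivial-source bimodule, not a complex) to the Brauer correspondent $B$ of $A$ in $H$, which in turn is Puig equivalent to $B_0(\mathcal O R(3))$. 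Okuyama has already established the splendid Rickard equivalence for $B_0(\mathcal O R(3))$; composing his equivalence with the Puig equivalence $A\simeq B$ gives the result for $A$.

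The Puig equivalence $A\simeq B$ is obtained by showing that the Green correspondent $\mathfrak f(A)$ induces a Morita equivalence. The gluing theorem is indeed used, but only to produce a \emph{stable} equivalence of Morita type; the required local Morita equivalences at centralisers of $C_2$ and $C_2\times C_2$ reduce to small blocks of $C_2\times\mathsf M_{12}$ and $C_2\times C_2\times\mathfrak S_5$, handled separately. This stable equivalence is then upgraded to a Morita equivalence via Linckelmann's criterion by checking, one simple at a time, that each simple $A$-module is sent to a simple $B$-module; this is the technical heart of the argument and uses Green correspondence, character restriction, and the structure of Scott modules for $R(3)$.

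Your direct approach---building a two-term complex or running Okuyama's algorithm inside $\mathsf{Co}_3$ itself---would force you to work with bimodules for the full group of order $2^{10}\cdot 3^7\cdot 5^3\cdot 7\cdot 11\cdot 23$ rather than for $R(3)$, and you give no indication of how to select the mutation sequence or verify the tilting conditions at that scale. The reduction to $R(3)$ via the intermediate subgroup $H$ is precisely the idea that makes the problem tractable, and it is absent from your plan.
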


\noindent
There are several cases where the conjectures
{\bf\ref{ADGC}} and {\bf\ref{RickardConjecture}}
of Brou{\'e} and Rickard, respectively, have been verified,
albeit the general conjecture is widely open;
for an overview, containing suitable references, 
see \cite{ChuangRickard}. As for general results
concerning blocks with a fixed defect group,
by \cite{Linckelmann1991}, \cite{Rickard1989},
\cite{Rouquier1995}, and \cite{Rouquier1998} the conjectures are
proved for blocks with cyclic defect groups in arbitrary 
characteristic; in characteristic $2$, 
by \cite{Linckelmann1994a}, \cite{Linckelmann1994b},
\cite{Rickard1996}, and \cite{Rouquier2001}
they are known to hold for blocks with elementary abelian
defect groups of order $4$, but already the case of
elementary abelian defect groups of order $8$ is open in general.
At least for {\sl principal} blocks in characteristic $2$
it has been already known (at least for experts) that
{\bf\ref{ADGC}} and {\bf\ref{RickardConjecture}} hold
by using a lifting method \cite[9.1.(3)]{Marcus},
and recently a new lifting method was found
\cite[Theorem 4.33]{CraRou}.

In the present paper we look at the case where a non-principal
block $A$ has an elementary abelian defect group $P$ of
order $8$, namely, $P = C_2 \times C_2 \times C_2$.
The numbers of irreducible ordinary 
characters $k(A)$ 
and of irreducible Brauer characters $\ell(A)$,
respectively, are important in block theory.
For the principal $2$-blocks they have been known for some time,
see \cite{Koshitani1980} and \cite{Landrock1981}, for instance.
However, only recently, the numbers 
of irreducible ordinary characters $k(A)$ 
and of irreducible Brauer characters $\ell(A)$ 
for non-principal $2$-blocks
have been determined in general,
see \cite{KessarKoshitaniLinckelmann2010}.
In \cite{KessarKoshitaniLinckelmann2010}
it is proved with the help of the classification of
finite simple groups, 
that Alperin's weight conjecture and also the
{\sl weak} version ({\sl character theoretic} version) 
of Brou{\'e}'s abelian defect group conjecture 
for arbitrary $2$-blocks with defect group $C_2 \times C_2 \times C_2$
are both true.
The {\sl strong} version of 
Brou{\'e}'s abelian defect group conjecture, namely,
the existence of Rickard splendid equivalences between blocks
corresponding via the Brauer correspondence
for arbitrary $2$-blocks with defect group 
$C_2 \times C_2 \times C_2$, is still open.
There are four cases for the inertial index $e$ of $A$
with the defect group $P = C_2 \times C_2 \times C_2$.
Namely, $e = 1, 3, 7$ or $21$,
since ${\mathrm{Aut}}(P) \cong \GL_3(2)$ has a unique
maximal $2'$-subgroup, up to conjugacy, which is isomorphic to
the Frobenius group $F_{21} = C_7 \rtimes C_3$ of order $21$.
For the cases where $e = 1$ everything is known because
the blocks are nilpotent, 
see Brou{\'e}-Puig \cite{BrouePuig1980-Inv}. 
For the case $e = 3$, there are results 
of Landrock \cite{Landrock1981} and Watanabe \cite{Watanabe}. 

Our objective in this paper now is to investigate
a non-principal $2$-block with elementary abelian 
defect group $P$ of order $8$, which has inertial index $21$.
An interesting candidate for this endeavour is the non-principal $2$-block
of Conway's third group ${\sf Co}_3$, for which we investigate 
whether the {\sl strong} version 
of Brou{\'e}'s abelian defect group conjecture holds;
for previous results on $\mathsf{Co}_3$, its defect groups, and $2$-modular
characters confer \cite[p.193 Table 6]{Fendel},
\cite[\S 7 p.1879]{Landrock1978},
\cite[Theorems 3.10 and 3.11]{Landrock1981},
and \cite{SuleimanWilson}, for example.
We remark that, as far as the quasi-simple groups related to the
sporadic simple groups are concerned, this is the only $2$-block 
for which Brou{\'e}'s abelian defect group 
conjecture is not yet known to hold, since
within this class of groups all other abelian $2$-blocks
are either cyclic or of Klein four defect, see \cite{Noeske}.

\medskip\noindent
Our main theorem of this paper is the following:

\begin{Theorem}
\label{MainTheorem}
Let $G$ be the sporadic simple Conway group ${\sf Co}_3$, and 
let $(\mathcal K, \mathcal O, k)$ be a splitting $2$-modular
system for all subgroups of $G$, 
see {\bf\ref{notation}}. 
Suppose that $A$ is a non-principal block algebra of $\mathcal OG$
with a defect group $P$ which is an elementary abelian group
of order $8$, and that $A_N$ is a block algebra
of $\mathcal ON_G(P)$ such that $A_N$ is the Brauer correspondent
of $A$. Then $A$ and $A_N$ are splendidly Rickard
equivalent, and hence the conjectures 
{\bf\ref{ADGC}} and {\bf\ref{RickardConjecture}} 
of Brou{\'e} and Rickard both hold.
\end{Theorem}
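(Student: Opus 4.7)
The plan is to follow the by-now standard Okuyama--Rickard strategy for verifying Brou{\'e}'s conjecture on a concrete block: first construct a splendid \emph{stable} equivalence of Morita type between $A$ and its Brauer correspondent $A_N$ via a trivial-source bimodule, and then upgrade it to a splendid \emph{Rickard} equivalence by an iterated tilting procedure. As a preliminary step, I would make the block data completely explicit: using the character table of $G={\sf Co}_3$ together with the $2$-modular information of \cite{SuleimanWilson}, I would single out the non-principal $2$-block $A$ of defect $3$, read off its ordinary and Brauer irreducible characters and its decomposition matrix, and confirm that the inertial index equals $21$. On the local side, I would identify the $2$-local subgroup $N_G(P)$ concretely using \cite{Fendel,Landrock1978} and describe $A_N$ explicitly; since $P$ is abelian with inertial quotient $F_{21}\leq\GL_3(2)\cong\Aut(P)$, the source algebra of $A_N$ is a crossed product of $\mathcal{O}P$ with $F_{21}$, whose representation theory is straightforward.

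\medskip

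The second step is to choose a suitable intermediate $2$-local subgroup $H$ with $N_G(P)\leq H<G$ so that the block $B$ of $\mathcal{O}H$ in Brauer correspondence with $A$ is Puig-equivalent to $A_N$ through a Fong-type reduction inside $H$, and to consider the Scott $\mathcal{O}(G\times H)$-module $M:=\operatorname{Sc}(G\times H,\Delta P)$ with vertex $\Delta P$. The functor $M\otimes_{\mathcal{O}H}-$ is expected to induce a splendid stable equivalence of Morita type between $B$ and $A$. I would verify this via Brou{\'e}'s criterion: for each simple $B$-module $S$, compute $M\otimes_{\mathcal{O}H}S$ and check that it is the direct sum of an indecomposable non-projective $A$-module and a projective summand, with the assignment $S\mapsto M\otimes_{\mathcal{O}H}S$ inducing a bijection on the simples. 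In practice this is carried out by comparing dimensions, vertices, and radical layers with the help of computer-algebra systems such as GAP and the MeatAxe. Composition with the Puig equivalence between $B$ and $A_N$ then yields the sought splendid stable equivalence between $A_N$ and $A$.

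\medskip

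With the stable equivalence in hand, the final step is to run Okuyama's algorithm on the $A_N$-side: produce a sequence of two-term tilting complexes of projective $A_N$-modules, each obtained from the previous one by mutating at a carefully chosen subset of simples, and verify at each stage that the image under the stable equivalence is a two-term complex of $p$-permutation $A$-modules whose endomorphism ring in the stable category matches the expected Cartan data on the $A$-side. The process terminates when the endomorphism ring of the resulting complex on the $A$-side is isomorphic to $A_N$, at which point Rickard's theorem \cite{Rickard1996} produces a splendid derived equivalence between $A_N$ and $A$ over $k$, and the standard lifting results \cite[9.1]{Marcus} or \cite[Theorem 4.33]{CraRou} promote it to $\mathcal{O}$. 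The principal obstacle is the combinatorial control of the Okuyama mutations: identifying the correct sequence of tilts requires detailed information about the radical and socle layers of the projective indecomposable $A_N$-modules, and the Cartan arithmetic on both sides has to close up exactly. For inertial index $21$ this book-keeping is substantially more delicate than in the classical cases $e\in\{1,3\}$, which is why this particular block of ${\sf Co}_3$ has resisted treatment up to now; however, $|A|$ and $|B|$ are small enough that the required computations can be performed explicitly with computer assistance.
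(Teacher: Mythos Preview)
Your outline contains a genuine misconception that derails the argument. You assume that the intermediate block $B$ of $\mathcal{O}H$ (with $N_G(P)\leq H<G$) is Puig-equivalent to $A_N$; in the case at hand $H=R(3)\times\mathfrak S_3$ and $B$ is Puig-equivalent to the \emph{principal} $2$-block of $R(3)\cong\SL_2(8)\rtimes C_3$, which is certainly not Morita equivalent to $A_N\cong\mathrm{Mat}_2(k[P\rtimes F_{21}])$. So the ``composition with the Puig equivalence between $B$ and $A_N$'' you invoke does not exist, and the splendid stable equivalence $A\leftrightarrow A_N$ cannot be produced the way you describe. Incidentally, checking that each simple $B$-module is sent to an indecomposable-plus-projective is not a criterion for $M$ to induce a stable equivalence; the paper instead establishes the stable equivalence via Linckelmann's gluing theorem, by verifying local Morita equivalences at $C_G(Q)$ for $1\neq Q\leq P$ (which reduces to non-principal $2$-blocks of $C_2\times\mathsf M_{12}$ and $C_2\times C_2\times\mathfrak S_5$).

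More importantly, you miss the paper's central shortcut. The decomposition matrices of $A$ and of the principal $2$-block of $R(3)$ coincide, and the whole point is to show that $A$ and $B$ are \emph{Morita} (indeed Puig) equivalent, not merely stably equivalent. This is done by taking the splendid stable equivalence $A\leftrightarrow B$ (obtained via gluing, composing through $A_N$) and proving that it sends every simple $A$-module to a simple $B$-module; Linckelmann's criterion then upgrades the stable equivalence to a Morita equivalence. Once $A$ is Puig-equivalent to $B_0(\mathcal{O}R(3))$, the splendid Rickard equivalence between $A$ and $A_N$ follows immediately from Okuyama's already-established result for $R(3)$. Thus no new Okuyama-type tilting sequence needs to be constructed or verified; the hard combinatorial work you anticipate in your third step is precisely what the paper avoids. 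Your plan of running Okuyama's mutations from scratch on the pair $(A,A_N)$ might in principle succeed, but it is both harder and unnecessary, and as written it rests on the false Puig-equivalence assumption above.
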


Actually, {\bf\ref{MainTheorem}} is the last tile in the mosaic
proving both Brou\'e's abelian defect group conjecture and Rickard's
conjecture for $\mathsf{Co}_3$ in arbitrary characteristic. 
Since $|G|
= 2^{10}{\cdot}3^7{\cdot}5^3{\cdot}7{\cdot}11{\cdot}23$, see
\cite[p.134]{Atlas}, as the conjectures are proved for blocks with cyclic
defect groups, it is sufficient to consider the primes $p \in
\{ 2, 3, 5 \}$. For odd $p$ the only block with defect at least 2
is the principal block, whose defect groups are not abelian. For
$p = 2$ there is precisely a unique block with 
a non-cyclic abelian defect group.
Its defect group is isomorphic to $C_2\times C_2\times C_2$ (see
\cite[$\mathsf{Co}_3$]{ModularAtlasProject}, 
\cite[p.1879]{Landrock1978} and 
\cite[p.494 \S 2]{SuleimanWilson}).
Therefore we may state the following immediate consequence of
{\bf\ref{MainTheorem}}:

\begin{Corollary}\label{ADGC-Co3}
The strong version of Brou\'e's abelian defect group conjecture
{\bf\ref{ADGC}} and
even Rickard's splendid
equivalence conjecture
{\bf\ref{RickardConjecture}} are true
for all primes $p$ and for all block algebras of $\mathcal OG$
if $G = {\sf Co}_3$.
\end{Corollary}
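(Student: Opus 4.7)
The plan is to deduce the Corollary from {\bf\ref{MainTheorem}} by a short case analysis over the primes dividing $|G|$, invoking in each case either the Main Theorem or the already-established cyclic defect results cited in the introduction.

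Concretely, I would factorise $|G|=2^{10}\cdot 3^7\cdot 5^3\cdot 7\cdot 11\cdot 23$, so that the primes to consider are $\{2,3,5,7,11,23\}$. For $p\in\{7,11,23\}$ a Sylow $p$-subgroup has order $p$, so every $p$-block of $\mathcal{O}G$ has cyclic (possibly trivial) defect, and both {\bf\ref{ADGC}} and {\bf\ref{RickardConjecture}} hold by the cyclic defect results of Linckelmann, Rickard and Rouquier recalled in the excerpt. For the odd primes $p\in\{3,5\}$, the block-distribution data for $\mathsf{Co}_3$ recorded in the modular Atlas \cite{ModularAtlasProject} show that the principal $p$-block is the unique block of defect at least $2$, and its defect group — a Sylow $p$-subgroup — is non-abelian, as noted in the paragraph preceding the Corollary; hence every $p$-block with abelian defect is of cyclic (or trivial) defect and is handled as above. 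Finally, for $p=2$ the Sylow $2$-subgroup has order $2^{10}$ and is non-abelian, excluding the principal block by the abelianity hypothesis; by \cite{ModularAtlasProject}, \cite{Landrock1978} and \cite{SuleimanWilson} as collected in the excerpt, exactly one non-principal $2$-block has a non-cyclic abelian defect group, necessarily isomorphic to $C_2\times C_2\times C_2$, and this is precisely the block treated by {\bf\ref{MainTheorem}}, while all other non-principal $2$-blocks have cyclic defect.

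The only step that is not entirely immediate is the block-theoretic bookkeeping for $p\in\{3,5\}$ and the identification of the unique non-principal non-cyclic abelian $2$-block, but these are purely verifications against the well-documented block structure of $\mathsf{Co}_3$ and pose no conceptual obstacle; all of the mathematical content of the Corollary is packaged into {\bf\ref{MainTheorem}}.
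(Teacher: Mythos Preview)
Your proposal is correct and follows essentially the same approach as the paper: the paragraph immediately preceding the Corollary already carries out precisely this case analysis over the primes dividing $|G|$, reducing via the cyclic-defect results to $p\in\{2,3,5\}$, dismissing $p=3,5$ because the unique block of defect $\geq 2$ has non-abelian defect, and isolating for $p=2$ the single non-cyclic abelian block handled by {\bf\ref{MainTheorem}}. The paper then simply states the Corollary as an immediate consequence, which is exactly what you have spelled out.
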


\noindent
As a matter of fact, 
the main result {\bf\ref{MainTheorem}} is obtained
by proving the following:

\begin{Theorem}\label{2ndMainTheorem} 
We keep the notation and the assumption as in 
{\bf\ref{MainTheorem}}.
Let $H$ be a maximal subgroup of $G$ with
$H = R(3) \times \mathfrak S_3 \geqslant N_G(P)$,
where $R(3) = {^2}{G_2(3)} \cong {\mathrm{SL}}_2(8) \rtimes C_3$
is the smallest Ree group,
$\mathfrak S_3$ is the symmetric group on $3$ letters,
and $C_3$ is the cyclic group of order $3$.
Let $B$ be a block algebra of $\mathcal OH$ such that
$B$ is the Brauer correspondent of $A$,
see \cite[Chap.5 Theorem 3.8]{NagaoTsushima}.
In addition, let $\mathfrak f$ denote the Green correspondence
with respect to $(G \times G, \Delta P, G \times H)$,
and let $M = \mathfrak f(A)$.
Then $M$ induces a Morita equivalence between $A$ and $B$,
and hence it is a Puig equivalence.
\end{Theorem}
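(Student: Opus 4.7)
The plan is to prove Theorem \ref{2ndMainTheorem} in two stages: first establish that $M$ induces a stable equivalence of Morita type between $A$ and $B$; second upgrade this to a genuine Morita equivalence by showing that $M\tenB -$ sends each simple $B$-module to a simple $A$-module. Since $M=\mathfrak f(A)$ is the Green correspondent of a block algebra, it is automatically a $p$-permutation (trivial source) $(A,B)$-bimodule with vertex $\Delta P$, so once Morita equivalence is established the abelianness of $P$ immediately gives the Puig (splendid Morita) equivalence asserted in the final clause.

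The first stage is handled by Brou\'e's theorem on the compatibility of Brauer correspondence with Green correspondence in the abelian-defect setting. Since $P$ is abelian and $H\geq N_G(P)$, the $\CO(G\times H)$-module $M$ inherits a natural $(A,B)$-bimodule structure, and the standard computation with the Brauer homomorphism yields
\[
 M\tenB M^*\cong A\oplus (\text{projective}),
 \qquad
 M^*\tenA M\cong B\oplus (\text{projective})
\]
as bimodules. Consequently $M\tenB -\colon \mod B\to\mod A$ is a stable equivalence of Morita type.

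For the second stage, by a well-known criterion of Linckelmann it suffices to verify that for every simple $B$-module $T$ the $A$-module $M\tenB T$ is simple. By \cite{KessarKoshitaniLinckelmann2010} it is already known that $k(A)=k(B)$ and $\ell(A)=\ell(B)$, and the stable equivalence automatically sends indecomposable non-projective modules to indecomposable non-projective modules; so it is enough to check that the $k$-dimension of $M\tenB T$ equals that of a simple $A$-module for each simple $B$-module $T$. To compute this dimension I would first determine the ordinary bimodule character $\chi_M$ by applying Green correspondence, on the level of characters, to $\chi_A=\sum_{\chi\in\Irr(A)}\chi\boxtimes\chi^*$, using the character table of $\mathsf{Co}_3$ from \cite{Atlas} together with the explicit direct-product structure of $H=R(3)\times\mathfrak S_3$. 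Decoding $\chi_M$ via the known $2$-modular decomposition matrix of $A$ (from the modular Atlas for $\mathsf{Co}_3$) and the easily assembled decomposition matrix of $B$ then yields the Brauer character of $M\tenB T$, from which one reads off both its dimension and its composition factors. Matching dimensions with those of the simple $A$-modules, together with the indecomposability coming from the stable equivalence, forces $M\tenB T$ to be simple.

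The main obstacle will be the explicit character-theoretic bookkeeping in the second stage: correctly computing the Green correspondence on characters so as to identify $\chi_M$, and pinning down which simple $A$-module is the image of each simple $B$-module $T$ together with a rigorous dimension comparison. Once this is carried out and Morita equivalence is established, the Puig equivalence (and hence Theorem \ref{MainTheorem} via Corollary \ref{ADGC-Co3}) follows formally from the vertex and trivial-source properties of $M$.
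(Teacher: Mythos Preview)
Your first stage contains a genuine gap. There is no general theorem of Brou\'e asserting that the Green correspondent $\mathfrak f(A)$ automatically induces a stable equivalence of Morita type whenever $P$ is abelian and $H\geqslant N_G(P)$. What exists is a \emph{gluing} criterion (due to Brou\'e, Rickard, Rouquier, and Linckelmann; see {\bf\ref{gluing}} in the paper): one must verify, for every non-trivial subgroup $Q\leqslant P$, that the analogous Green correspondent induces a \emph{Morita} equivalence between the relevant blocks of $kC_G(Q)$ and $kC_H(Q)$. This is real work here: for $|Q|=2$ one has $C_G(Q)\cong C_2\times\mathsf{M}_{12}$, and for $|Q|=4$ one has $C_G(Q)\cong (C_2)^2\times\mathfrak S_5$, and the paper devotes Sections~\ref{Non-principal2-blocks}--\ref{co3} to establishing these local Morita equivalences (Lemmas {\bf\ref{abelian2xS5}}, {\bf\ref{C2xM12}}, {\bf\ref{stableEquivalence-A-AN}}, {\bf\ref{R(3)xS3}}) before obtaining the stable equivalence in {\bf\ref{stableEquivalence-A-B}}. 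Your ``standard computation with the Brauer homomorphism'' does not bypass this.

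Your second stage is a genuinely different route from the paper's, but as written it is not a proof. Two issues. First, ``applying Green correspondence on the level of characters to $\chi_A$'' is not an algorithm: the restriction $A{\downarrow}_{G\times H}$ has many indecomposable summands, and isolating the character of the one with vertex $\Delta P$ is essentially the problem you are trying to solve. Second, even granting you know the Brauer character of $M\otimes_B T$, the sentence ``matching dimensions with those of the simple $A$-modules, together with indecomposability, forces $M\otimes_B T$ to be simple'' is false in general: an indecomposable module whose dimension happens to equal that of a simple module need not be simple. You would need to show that the Brauer character of $M\otimes_B T$ is \emph{irreducible}, not merely that its degree matches. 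The paper instead works module-theoretically: for $S_1,S_2,S_2^*,S_4$ it identifies $f(S_i)$ directly via trivial-source arguments and explicit restriction ({\bf\ref{GreenCorrespondent-S2-S3-S4}}, {\bf\ref{GreenCorrespondent-S1}}), and for the hard case $S_5$ it uses a ``stripping-off'' argument on a carefully chosen permutation module ({\bf\ref{F(S5)}}), exploiting the known radical/socle series of the PIMs in $B$.
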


\noindent
The following result is used
to get {\bf\ref{corollary-R(q)}} from
our main result {\bf\ref{2ndMainTheorem}}. 

\begin{Theorem} 
[Landrock-Michler \cite{LandrockMichler1980}
and Okuyama \cite{Okuyama1997}]
\label{LandrockMichlerOkuyama}
Let $p = 2$, and let $R(q) = {^2}{G_2}(q)$ be a Ree group,
where $q = 3^{2n+1}$ for some $n = 0, 1, 2, \cdots$.
Let $(\mathcal K, \mathcal O, k)$ be a splitting $2$-modular
system for all subgroups of $R(q)$, for all $q$ at the same time,
see \cite[Theorem 3.6]{Willems},
and let $B_0(\mathcal O R(q))$ be the principal block algebra
of the group algebra $\mathcal OR(q)$.
Then the block algebras $B_0(\mathcal O R(3))$ 
and $B_0(\mathcal O R(q))$ are Puig equivalent.
In particular, Brou{\'e}'s abelian defect group conjecture {\bf\ref{ADGC}}
and Rickard's conjecture {\bf\ref{RickardConjecture}}
hold for the principal block algebras of $R(q)$
for any $q$.
\end{Theorem}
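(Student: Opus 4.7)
The general plan is to show that all relevant $2$-local invariants of $B_0(\mathcal O R(q))$ are independent of $q = 3^{2n+1}$, and then to lift this ``sameness'' to a Puig equivalence. A direct computation with $|R(q)| = q^3(q-1)(q^3+1)$ shows $|R(q)|_2 = 8$, and a Sylow $2$-subgroup $P$ of $R(q)$ is elementary abelian of order $8$. The normalizer has the form $N_{R(q)}(P) = P \rtimes F_{21}$ with $F_{21} = C_7 \rtimes C_3$ acting irreducibly on $P \cong \F_2^3$ via the unique (up to conjugacy) faithful action inside $\GL_3(2)$; hence $N_{R(q)}(P) \cong N_{R(3)}(P)$ as abstract groups for every $q$. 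I would also verify that the fusion system $\CF_P(R(q))$ is independent of $q$, by examining the involution centralizers $C_{R(q)}(z) \cong \gen{z} \times \PSL_2(q)$ together with Alperin's fusion theorem: since all involutions of $R(q)$ are conjugate and every Klein four subgroup of $P$ is already visible in $N_{R(q)}(P)$, the fusion is controlled by the normalizer.

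With the local data pinned down, I would first handle $R(3)$ explicitly. Because $|R(3)| = 1512$, the decomposition matrix, Cartan matrix and Loewy structure of $B_0(\mathcal O R(3))$ can be computed directly (this is essentially the content of Landrock--Michler), and a Morita equivalence between $B_0(\mathcal O R(3))$ and $B_0(\mathcal O N_{R(3)}(P))$ can be realised by the Green correspondent, with respect to $(R(3)\times R(3), \Delta P, R(3) \times N_{R(3)}(P))$, of the block bimodule; since this bimodule is a $p$-permutation module with vertex $\Delta P$ and trivial source, the resulting Morita equivalence is automatically a Puig equivalence. For general $q$ I would then apply Okuyama's iterative procedure: starting from the trivial tilting complex over $B_0(\mathcal O N_{R(q)}(P))$, mutate at a carefully chosen sequence of simple modules to obtain a two-sided tilting complex of $(B_0(\mathcal O R(q)), B_0(\mathcal O N_{R(q)}(P)))$-bimodules. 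Since the numerical invariants of $B_0(\mathcal O R(q))$ coincide with those of $B_0(\mathcal O R(3))$, the very same sequence of mutations is applicable uniformly in $q$, and composing with the $R(3)$ case yields the desired equivalence.

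The principal obstacle is showing that Okuyama's construction descends to a genuinely \emph{splendid} complex over $\mathcal O R(q)$, and not merely an abstract derived equivalence. This requires precise control over the vertices and sources of the $p$-permutation bimodules appearing at each mutation step: one must check that every indecomposable summand of the induced modules $\Ind_{N_{R(q)}(P)}^{R(q)}(-)$ that intervenes has vertex contained in $P$ and trivial source, which is where the $q$-independence of the fusion system $\CF_P(R(q))$ plays the decisive role. Once this is secured, Puig's theorem that a splendid Rickard equivalence whose terms are trivial-source bimodules with vertex $\Delta P$ collapses to a Puig equivalence closes the argument; Brou\'e's and Rickard's conjectures for the principal block of $R(q)$ then follow from the corresponding statements for $R(3)$ by transport along the Puig equivalence.
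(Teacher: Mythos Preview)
Your proposal contains two substantive errors.

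First, the assertion that the Green correspondent bimodule realises a \emph{Morita} equivalence between $B_0(\mathcal O R(3))$ and $B_0(\mathcal O N_{R(3)}(P))$ is false. These two blocks are not Morita equivalent: $B_0(kR(3))$ has simple modules of dimensions $1,1,1,6,12$ (see {\bf\ref{BrauerChar-B}}) while $B_0(k[P\rtimes F_{21}])$ has simple modules of dimensions $1,1,1,3,3$ (see {\bf\ref{simples-A-N}}), and their decomposition matrices differ. The Green correspondent bimodule yields only a \emph{stable} equivalence of Morita type here (cf.\ {\bf\ref{R(3)xS3}}); Okuyama's contribution is to upgrade this to a \emph{derived} equivalence, not a Morita one.

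Second, there is no theorem of Puig collapsing a splendid Rickard equivalence to a Puig equivalence. A Puig equivalence is by definition a \emph{Morita} equivalence induced by a single $\Delta P$-projective trivial-source bimodule; a splendid Rickard equivalence is a derived equivalence realised by a \emph{complex} of such bimodules, and the former is strictly stronger. Even granting splendidness, your construction produces splendid derived equivalences $B_0(\mathcal O R(q)) \simeq B_0(\mathcal O[P\rtimes F_{21}])$ for each $q$, and composing two of these through the common Brauer correspondent gives only a derived equivalence between $B_0(\mathcal O R(3))$ and $B_0(\mathcal O R(q))$, not the asserted Puig equivalence. The paper itself does not argue: its proof is a bare citation of \cite[Theorem 5.3]{LandrockMichler1980} and \cite[Example 3.3, Remark 3.4]{Okuyama1997}, which establish the Puig equivalence between the principal blocks of $R(3)$ and $R(q)$ directly; the ``in particular'' clause about Brou\'e's and Rickard's conjectures is then a \emph{consequence} of this Puig equivalence together with Okuyama's derived equivalence for $R(3)$, not the mechanism by which the Puig equivalence itself is obtained. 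Your outline conflates these two logically distinct assertions.
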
 

\begin{proof}
This follows from \cite[Theorem 5.3]{LandrockMichler1980}
and \cite[Example 3.3 and Remark 3.4]{Okuyama1997}.
\end{proof}

\begin{Corollary}\label{corollary-R(q)}
We keep the notation and the assumption as in {\bf\ref{MainTheorem}}. 
Let $R(q) = {^2}{G_2}(q)$ be a Ree group,
where $q = 3^{2n+1}$ for some $n = 0, 1, 2, \cdots$.
We may assume that $(\mathcal K, \mathcal O, k)$ also is a
splitting $2$-modular system for all subgroups of $R(q)$,
for all $q$ at the same time.
Let $B_0(\mathcal OR(q))$ be the principal block algebra
of the group algebra $\mathcal OR(q)$.
Then $A$ and $B_0(\mathcal O R(q))$ are Puig equivalent.
\end{Corollary}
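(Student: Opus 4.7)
The plan is to chain Theorem \ref{2ndMainTheorem} and Theorem \ref{LandrockMichlerOkuyama} by identifying the Brauer correspondent block $B$ of $\mathcal{O}H$ up to Puig equivalence with $B_0(\mathcal{O}R(3))$. Theorem \ref{2ndMainTheorem} supplies a Puig equivalence between $A$ and $B$, where $B$ is a block of $\mathcal{O}H$ with $H = R(3) \times \mathfrak{S}_3$; Theorem \ref{LandrockMichlerOkuyama} will then connect $B_0(\mathcal{O}R(3))$ to $B_0(\mathcal{O}R(q))$, and transitivity of Puig equivalence will finish the argument.

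Since $H$ is a direct product, $B$ factors as $B = b_1 \otimes_{\mathcal{O}} b_2$, where $b_1$ is a $2$-block of $\mathcal{O}R(3)$ and $b_2$ is a $2$-block of $\mathcal{O}\mathfrak{S}_3$, and the defect group and inertial index of $B$ are both multiplicative along this decomposition. The $2$-blocks of $\mathfrak{S}_3$ are precisely the principal block (defect group $C_2$, inertial index $1$) and a defect-zero block afforded by the $2$-dimensional standard character. Since $B$ has defect group $P \cong C_2^3$ of order $8$ and inertial index $21$, and since any block of $R(3)$ with defect group of order $4$ has inertial index dividing $|\Aut(C_2 \times C_2)| = 6$, the only possibility is that $b_2$ is the defect-zero block (so its defect group is trivial) and $b_1$ has defect group $C_2^3$ with inertial index $21$. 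The Sylow $2$-subgroup of $R(3) = \SL_2(8) \rtimes C_3$ is elementary abelian of order $8$ with $N_{R(3)}(P)/P \cong F_{21}$, so the only block of $R(3)$ with these invariants is the principal block: $b_1 = B_0(\mathcal{O}R(3))$.

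Because $b_2$ has defect zero, it is a matrix algebra over $\mathcal{O}$ and is Morita equivalent to $\mathcal{O}$ via its unique simple module, an equivalence which is automatically Puig with respect to the trivial defect group. Tensoring this bimodule with the identity on $B_0(\mathcal{O}R(3))$ yields a Puig equivalence $B = B_0(\mathcal{O}R(3)) \otimes_\mathcal{O} b_2 \simeq B_0(\mathcal{O}R(3))$. Applying Theorem \ref{LandrockMichlerOkuyama} and using transitivity of Puig equivalence produces the chain $A \simeq B \simeq B_0(\mathcal{O}R(3)) \simeq B_0(\mathcal{O}R(q))$, which is the claim. The only non-routine point is the identification of $b_1$ and $b_2$ in the middle paragraph; fortunately the block-theoretic invariants (defect group of order $8$, inertial index $21$) determine them uniquely in the direct product $H$, so once Theorems \ref{2ndMainTheorem} and \ref{LandrockMichlerOkuyama} are in hand the corollary reduces to this piece of bookkeeping.
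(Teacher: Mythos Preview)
Your argument is correct and follows the same route as the paper: the paper's proof simply cites {\bf\ref{2ndMainTheorem}}, {\bf\ref{LandrockMichlerOkuyama}}, and {\bf\ref{TensorPuigEquivalence}}, the last of which packages exactly your ``tensor with a defect-zero block'' step, while the identification $B = B_0(\mathcal O R(3))\otimes\beta$ that you re-derive via defect and inertial-index bookkeeping is recorded earlier in the paper as {\bf\ref{2-local-Co3}}(iv),(xi). The only cosmetic difference is that the paper takes the block structure of $R(3)$ (one block of full defect, three of defect zero) as known input rather than arguing via inertial indices.
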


\begin{Strategy}
Our starting point for this work is the observation that the
$2$-decomposition matrix for the non-principal block $A$ of 
${\sf Co}_3$ with an elementary abelian defect group of order $8$,
see \cite{SuleimanWilson}, is exactly the same as that for
the principal $2$-block $B$ of $R(3) \cong \SL_2(8) \rtimes C_3$, see
\cite{LandrockMichler1980}. Therefore it is natural to ask whether
these two $2$-block algebras are Morita equivalent not only over
an algebraically closed field $k$ of characteristic $2$ but also
over a complete discrete valuation ring $\mathcal O$ whose residue
field is $k$. Furthermore, one might even expect that they are
{\sl Puig equivalent}, see {\bf\ref{equiv}}. If this is the case,
since the two conjectures of Brou\'e and Rickard {\bf\ref{ADGC}}
and {\bf\ref{RickardConjecture}} respectively have been shown to
hold for the principal $2$-block of $R(3)$ in a paper of Okuyama
\cite{Okuyama1997}, it follows that these conjectures also hold for the
non-principal $2$-block of ${\sf Co}_3$ with the same defect group 
$P = C_2 \times C_2 \times C_2$.

The verification that $A$ and $B$ are indeed Morita equivalent
relies on theorems by Linckelmann, Brou{\'e}, Rickard and Rouquier. 
Linckelmann has shown in
\cite{Linckelmann1996MathZ} that a stable equivalence of Morita type
between $A$ and $B$ which maps simple modules to simple modules is in
fact a Morita equivalence, see {\bf{\ref{StableIndecomposableMorita}}}.
To obtain an appropriate stable equivalence, we employ a variant of a
"gluing" theorem, which is due to (originally Brou{\'e} 
\cite[6.3.Theorem]{Broue1994}), Rickard \cite[Theorem 4.1]{Rickard1996}, 
Rouquier \cite[Theorems 5.6 and 6.3, Remark 6.4]{Rouquier2001}, and
Linckelmann, see \cite{Linckelmann1998}, \cite{Linckelmann2009} 
and {\bf{\ref{gluing}}}: 
A stable equivalence between
two blocks $A$ and $B$ 
may be 
deduced
from Morita equivalences between unique
blocks of the centralisers of 
non-trivial subgroups of $P$ in 
${\sf Co}_3$ and $R(3)$.
Once we have obtained a stable equivalence of Morita type between $A$ and
$B$, it remains to show that it preserves simplicity of
modules as stated above.
Usually this may be a very hard task.
\end{Strategy}

\begin{Contents}
The paper is structured as follows:
In Section \ref{pre}, we give the fundamental lemmas which are used 
to prove our main results. Furthermore, we establish some properties
of the stable equivalences we consider, 
and collect some further results on Morita equivalences and Green
correspondence for ease of reference.
In Section \ref{Non-principal2-blocks}
we investigate non-principal $2$-blocks of the symmetric group
$\mathfrak S_5$ and the Mathieu group $\sf{M}_{12}$
whose structure will be used later on in order to get our
main theorems.
In Section \ref{co3} the main objective is to construct the stable
equivalence of Morita type between the blocks $A$ and $B$ as outlined
above. In order to apply gluing theorems of
Rouquier and Linckelmann \textbf{\ref{gluing}}, 
we begin by analysing the 2-local structure of
$\mathsf{Co}_3$ to identify the groups.
Then, 
we combine this knowledge and
what we get already in Section \ref{Non-principal2-blocks}
to give a stable equivalence $F$ as saught.
Section \ref{blocks} prepares the proof that 
$F$ maps simple $A$-modules to
simple $B$-modules. In order to prove
this fact, we collect information on simple and indecomposable modules in
the three blocks $A$, $B$, and $A_N$.
In Section~\ref{img} we determine the $F$-images of the simple $A$-modules,
thus showing that they are indeed all simple. 
Finally, in Section \ref{proof} we combine the previous results to give
complete proofs of our main theorems 
{\bf\ref{MainTheorem}},
{\bf\ref{ADGC-Co3}},
{\bf\ref{2ndMainTheorem}} and
{\bf\ref{corollary-R(q)}}.
At the end of the paper, we have collected several useful properties of the
stable equivalences obtained through {\bf\ref{gluing}}.
\end{Contents}

\begin{Computations}
A few words on computer calculations are in order.
To find our results, next to theoretical reasoning
we have to rely fairly heavily on computations.
Of course, many of the data contained in explicit
libraries and databases are of computational nature, 
and quite a few traces of further computer calculations 
are still left in the present exposition. But we would 
like to point out that we have found many
of our intermediate results by explicit computations first, 
which have subsequently been replaced by more theoretical arguments.

As tools, we use the computer algebra system {\sf GAP} \cite{GAP},
to calculate with permutation groups and tables of marks,
as well as with ordinary and Brauer characters. 
We also make use of the data library 
\cite{CTblLib}, in particular allowing for easy access to the data
compiled in \cite{Atlas}, \cite{ModularAtlas} and 
\cite{ModularAtlasProject},
and of the interface \cite{AtlasRepresentation} to the data library 
\cite{ModularAtlasRepresentation}.
Moreover, we use the computer algebra system {\sf MeatAxe} \cite{MA}
to handle matrix representations over finite fields,
as well as its extensions to compute 
submodule lattices \cite{LuxMueRin},
radical and socle series \cite{LuxWie},
homomorphism spaces and endomorphism rings \cite{LuxSzoke},
and direct sum decompositions \cite{LuxSzokeII}.
We give more comments later on where necessary.
\end{Computations}

\begin{Notation}\label{notation} 
Throughout this paper, we use the standard 
notation and terminology as is used in
\cite{NagaoTsushima}, \cite{Thevenaz} and \cite{Atlas}.

Let $k$ be a field and assume that $A$ and $B$ are
finite dimensional $k$-algebras. 
We denote by $\mathrm{mod}{\text -}A$,
$A{\text -}\mathrm{mod}$ and
$A{\text -}\mathrm{mod}{\text -}B$
the categories of finitely generated right $A$-modules,
left $A$-modules and $(A,B)$-bimodules, respectively.
We write $M_A$, $_AM$ and $_AM_B$ when $M$ is 
a right $A$-module,
a left $A$-module and an $(A,B)$-bimodule. 
In this note, a module always refers to 
a finitely generated right module, unless stated otherwise.
We let $M^\vee =\Hom_A(M_A, A_A)$ be the $A$-dual
of $M$, so that $M^\vee$ becomes a left $A$-module 
via $(a\phi)(m) = a{\cdot}\phi(m)$ for $a \in A$, $\phi \in M^\vee$
and $m \in M$,
and we let $M^\circledast =\Hom_k(M, k)$ 
be the $k$-dual of $M$, so that $M^\circledast$
becomes a left $A$-module as well
via $(a\phi)(m) = \phi(ma)$ 
for $a \in A$, $\phi \in M^\circledast$ and $m \in M$.
For $A$-modules $M$ and $N$ we write $[M,N]^A$ for 
$\mathrm{dim}_k[\mathrm{Hom}_{A}(M,N)]$.
We fix for a while an $A$-module $M$.
Then, for a projective cover $P(S)$ of a simple $A$-module $S$,
we write $[P(S) \mid M]^A$ for the multiplicity of direct summands
of $M$ which are isomorphic to $P(S)$.
We denote by ${\mathrm{soc}}(M)$ and ${\mathrm{rad}}(M)$
the socle and the radical of $M$, respectively,
and hence ${\mathrm{rad}}(M) = M{\cdot}{\mathrm{rad}}(A)$.
For simple $A$-modules $S_1, \cdots, S_n$, 
and positive integers $a_1, \cdots, a_n$,
we write that 
"$M = a_1 \times S_1 + \cdots + a_n \times S_n$,
as composition factors"
when the set of all composition factors are
$a_1$ times $S_1$, $\cdots$, $a_n$ times $S_n$.
In order to avoid being ambiguous, we sometimes
use convention such as
$M = a_1 \times [S_1] + \cdots + a_n \times [S_n]$.
For another $A$-module $L$, we write $M | L$ when
$M$ is isomorphic to a direct summand of $L$ as
an $A$-module.
If $A$ is self-injective,
the stable module category $\underline{\mathrm{mod}}{\text -}A$,
is the quotient category of $\mathrm{mod}{\text -}A$
with respect to the projective $A$-homomorphisms, that is
those factoring through a projective module.

In this paper, $G$ is always a finite group and we fix a
prime number $p$. Assume that $(\mathcal K, \mathcal O, k)$ is a
splitting $p$-modular system for all subgroups of $G$, that is
to say, $\mathcal O$ is a complete discrete valuation ring of
rank one such that its quotient field is $\mathcal K$ which is
of characteristic zero, and its residue field
$\mathcal O/\mathrm{rad}(\mathcal O)$ is $k$, which is of
characteristic $p$, and that $\mathcal K$ and $k$ are
splitting fields for all subgroups of $G$.
By an $\mathcal OG$-lattice we mean a finitely generated
right $\mathcal OG$-module which is a free $\mathcal O$-module.
We denote by $k_G$ the trivial $kG$-module, 
and similarly by $\mathcal{O}_G$
the trivial $\mathcal{O}G$-lattice.
If $X$ is a $kG$-module, then we write $X^* =\mathrm{Hom}_k(X,k)$ 
for the {\sl contragredient} of $X$, 
namely, $X^*$ 
is again a right $kG$-module
via $(\varphi g) (x) = \varphi(xg^{-1})$ for $x \in X$,
$\varphi \in X^*$ and $g \in G$;
if no confusion may arise we also call this the {\sl dual} of $X$.
Let $H$ be a subgroup of $G$, and let $M$ and $N$ be
an $\mathcal OG$-lattice and an $\mathcal OH$-lattice, respectively.
Then let ${M}{\downarrow}^G_H = {M}{\downarrow}_H$ be the
restriction of $M$ to $H$, and let 
${N}{\uparrow}_H^G = {N}{\uparrow}^G = 
(N \otimes_{\mathcal OH}\mathcal OG)_{\mathcal OG}$ 
be the induction (induced module) of $N$ to $G$.
A similar definition holds for $kG$- and $kH$-modules. 
For a subgroup $Q$ of $G$ we write
Scott$(G,Q)$ for the (Alperin-)Scott module with respect to
$Q$ in $G$, see \cite[Chap.4 p.297]{NagaoTsushima}.

We denote by $\mathrm{Irr}(G)$ and $\mathrm{IBr}(G)$ the sets of
all irreducible ordinary and Brauer characters of $G$, respectively.
Since the character field $\Q(\chi):=\Q(\chi(g)\:;\:g\in G)\subseteq\mathcal K$ 
of any character $\chi\in\mathrm{Irr}(G)$
is contained in a cyclotomic field, we may identify $\Q(\chi)$
with a subfield of the complex number field $\C$, hence we may think
of characters having values in $\C$. In particular,
we write $\chi^{*}$ for the complex conjugate of $\chi$,
where of course $\chi^{*}$ is the character of the 
$\mathcal K G$-module contragredient to the $\mathcal K G$-module
affording $\chi$.
For $\chi,\psi\in\mathrm{Irr}(G)$
we denote by $(\chi, \psi)^G$ the usual inner product.
If $A$ is a block algebra ($p$-block) of $\mathcal OG$,
then we write $\mathrm{Irr}(A)$ and $\mathrm{IBr}(A)$ for
the sets of all characters in $\mathrm{Irr}(G)$ and $\mathrm{IBr}(G)$
which belong to $A$, respectively.
We denote by $B_0(kG)$ the principal block algebra of $kG$,
we write $1_G$ for the trivial character of $G$.

Let $G'$ be another finite group, and let $V$ be an 
$(\mathcal OG, \mathcal OG')$-bimodule. 
Then we can regard $V$ as a right
$\mathcal O[G \times G']$-module.
A similar definition holds for $(kG, kG')$-bimodules.
We denote by $\Delta G= \{ (g,g) \in G \times G \, | \, g \in G \}$
the diagonal copy of $G$ in $G \times G$.
For an $(\mathcal OG, \mathcal OG')$-bimodule $V$ and a common
subgroup $Q$ of $G$ and $G'$, we set
$V^{\Delta Q}  
= \{ v \in V \ | \ qv = vq \text{ for all } q \in Q \}$.
If $Q$ is a $p$-group, the Brauer 
construction is defined to be
the quotient
$V(\Delta Q) = V^{\Delta Q}/ [\sum_{R \lneqq Q}
 {\mathrm{Tr}}{\uparrow}_{R}^{Q}(V^{\Delta R}) 
 + {\mathrm{rad}}{\mathcal O}{\cdot}V^{\Delta Q}]$,
where ${\mathrm{Tr}}{\uparrow}_{R}^{Q}$ 
is the usual trace map. The Brauer homomorphism
${\mathrm{Br}}_{\Delta Q}: (\mathcal OG)^{\Delta Q} 
 \rightarrow  kC_G(Q)$
is obtained from composing the canonical epimorphism
$(\mathcal OG)^{\Delta Q} \twoheadrightarrow
 (\mathcal OG)(\Delta Q)$
and the canonical isomorphism
$(\mathcal OG)(\Delta Q) 
 \overset{\approx}{\rightarrow} kC_G(Q)$.

Let $n$ be a positive integer. Then, 
$\mathfrak A_n$ and $\mathfrak S_n$ denote 
the alternating and the symmetric groups on $n$ letters. 
Also, $C_n$ and $D_{2n}$ 
denote the cyclic group of order $n$ and the dihedral group
of order $2n$, respectively.
Moreover, for $i \in \{ 10, 11, 12, 22, 23, 24\}$, 
$\mathsf{M}_i$ denotes the Mathieu group of degree $i$.
We denote by $Z(G)$ the centre of $G$, and by $S^g$
a set $g^{-1}Sg$ for $g \in G$ and a subset $S$ of $G$.

\end{Notation}

\begin{Equivalences}\label{equiv}
Let $A$ and $A'$ be block algebras of $\mathcal OG$ and $\mathcal OG'$, 
respectively. Then we say that $A$ and $A'$ are {\sl Puig equivalent}
if $A$ and $A'$ have a common defect group $P$, 
and if there is a Morita equivalence between $A$ and $A'$
which is induced by an $(A,A')$-bimodule $\mathfrak M$ 
such that, as a right
$\mathcal O[G \times G']$-module, 
$\mathfrak M$ is a trivial source
module and $\Delta P$-projective. A similar definition
holds for blocks of $kG$ and $kG'$.
Due to a result of Puig (and independently of Scott), 
see \cite[Remark 7.5]{Puig1999},
this is equivalent to a condition that
$A$ and $A'$ have source algebras which are isomorphic as
interior $P$-algebras,
see \cite[Theorem 4.1]{Linckelmann2001}.

We say that $A$ and $A'$ are 
{\sl stably equivalent of Morita type} 
if there exists an $(A, A')$-bimodule 
$\mathfrak M$ such that
${_A}\mathfrak M$ is projective as a left $A$-module,
$\mathfrak M_{A'}$ is projective as a right $A'$-module,
$_A(\mathfrak M \otimes_{A'} \mathfrak M^\vee)_A 
  \cong {_A}{A}{_A} \oplus
 ({\mathrm{proj}} \ (A,A){\text{-}}{\mathrm{bimod}})$
and
$_{A'} (\mathfrak M^\vee \otimes_A \mathfrak M)_{A'} 
\cong {_{A'}}{A'}{_{A'}} \oplus
 ({\mathrm{proj}} \ (A' ,A'){\text{-}}{\mathrm{bimod}})$.

We say that $A$ and $A'$ are 
{\sl splendidly stably equivalent of Morita type}
if $A$ and $A'$ have a common defect group $P$ and 
the stable 
equivalence of Morita type is induced by
an $(A,A')$-bimodule $\mathfrak M$ 
which is a trivial source
$\mathcal O[G \times G']$-module and is $\Delta P$-projective,
see \cite[Theorem~3.1]{Linckelmann2001}.

We say that $A$ and $A'$ are 
{\sl derived equivalent (or Rickard equivalent)} if
${\mathrm{D}}^b({\mathrm{mod}}{\text{-}}A)$ and
${\mathrm{D}}^b({\mathrm{mod}}{\text{-}}A')$ are
equivalent as triangulated categories,
where 
${\mathrm{D}}^b({\mathrm{mod}}{\text{-}}A)$
is the bounded derived category of ${\mathrm{mod}}{\text{-}}A$.
In that case, there even is a {\sl Rickard complex}
$M^{\bullet} \in
{\mathrm{C}}^b (A{\text{-}}{\mathrm{mod}}{\text{-}}A')$,
where the latter 
is the category of bounded complexes
of finitely generated $(A,A')$-bimodules,
all of whose terms are projective both as 
left $A$-modules and as right $A'$-modules, such that 
$M^{\bullet}\otimes_{A'}(M^{\bullet})^{\vee}\cong A$
in $K^b(A{\text{-}}{\mathrm{mod}}{\text{-}}A)$
and
$(M^{\bullet})^{\vee}\otimes_A M^{\bullet}\cong A'$
in $K^b(A'{\text{-}}{\mathrm{mod}}{\text{-}}A')$,
where $K^b(A{\text{-}}{\mathrm{mod}}{\text{-}}A)$ is the homotopy
category associated with 
${\mathrm{C}}^b (A{\text{-}}{\mathrm{mod}}{\text{-}}A)$.
In other words, in that case we even have 
$K^b({\mathrm{mod}}{\text{-}}A) \cong 
K^b({\mathrm{mod}}{\text{-}}A')$. 

We say that $A$ and $A'$ are 
{\sl splendidly Rickard equivalent}
if $K^b({\mathrm{mod}}{\text{-}}A)$ and 
$K^b({\mathrm{mod}}{\text{-}}A')$ are equivalent
via a Rickard complex 
$M^{\bullet}\in 
{\mathrm{C}}^b (A{\text{-}}{\mathrm{mod}}{\text{-}}A')$
as above,
such that additionally each of its terms is a 
direct sum of $\Delta P$-projective 
trivial source modules as an
$\mathcal O[G \times G']$-module.
\end{Equivalences}

\section{Preliminaries}\label{pre}

In this section we give several theorems crucial to the later sections
of this paper. We state these results in a more general context; in
particular, $G$ is an arbitrary finite group and $(K,\mathcal{O},k)$ is
a $p$-modular splitting system for $G$. As we draw upon these lemmas
frequently in the sequel, we state these explicitly for the convenience
of the reader and ease of reference.

As stated in the introduction, our approach centres around
\textbf{\ref{StableIndecomposableMorita}} which allows us to verify that a
stable equivalence of Morita type 
is in fact a Morita equivalence. The stable 
equivalences investigated are obtained with the help of \textbf{\ref{gluing}},
and are realised by tensoring with a bimodule given through Green
correspondence. We proceed to study several properties of these
stable equivalences, and give some further results needed in the upcoming
parts of this paper. We refer the reader also to the appendix for
a more detailed discussion of further properties of stable equivalences
obtained through {\bf\ref{gluing}}.

\begin{Lemma} [Linckelmann \cite{Linckelmann1996MathZ}] 
\label{StableIndecomposableMorita}
Let $A$ and $B$ be finite-dimensional $k$-algebras
such that $A$ and $B$ are both
self-injective and indecomposable as algebras, but not simple.
Suppose that there is an 
$(A,B)$-bimodule $M$ such that $M$ induces a stable equivalence
between the algebras $A$ and $B$.

\begin{enumerate}
  \renewcommand{\labelenumi}{\rm{(\roman{enumi})}}
    \item
If $M$ is indecomposable then for any simple $A$-module $S$, the $B$-module
$(S \otimes_A M)_B$ is non-projective and indecomposable.
    \item
If for all simple $A$-module $S$ the $B$-module 
$S \otimes_A M$ is simple then $M$ induces a Morita
equivalence between $A$ and $B$.
    \item
If $(M, M^\vee)$ induces a stable equivalence of Morita type
between $A$ and $B$ then there is a unique (up to isomorphism)
non-projective indecomposable $(A,B)$-bimodule $M'$ such that
$M' \mid M$, and $(M', {M'}^\vee)$ induces a stable equivalence
of Morita type between the algebras $A$ and $B$. 
\end{enumerate}
\end{Lemma}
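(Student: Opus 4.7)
Since $A$ and $B$ are indecomposable, non-simple, finite-dimensional self-injective algebras, both $\mathrm{rad}(A)$ and $\mathrm{rad}(B)$ are non-zero and no simple module is projective. Reading ``stable equivalence'' in the sense of Morita type used throughout the paper, $M$ is projective both as a left $A$-module and as a right $B$-module, and one has bimodule isomorphisms $M\otimes_B M^\vee\cong A\oplus P$ and $M^\vee\otimes_A M\cong B\oplus Q$ with $P, Q$ projective bimodules. In particular $-\otimes_A M$ is exact throughout, and together with $-\otimes_B M^\vee$ it induces a pair of mutually quasi-inverse equivalences on stable module categories.

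For (i), the stable equivalence sends the class of a simple non-projective $S$ to a non-zero indecomposable class in $\underline{\mathrm{mod}}\text{-}B$, so in the module category one can write $S\otimes_A M\cong N\oplus N'$ with $N$ non-projective indecomposable and $N'$ projective, and my aim is to show $N'=0$. Applying $-\otimes_B M^\vee$ yields $S\oplus(S\otimes_A P)$ on one side, where $S\otimes_A P$ is projective as a right $A$-module (being a summand of $S\otimes_A(A\otimes_k A)^n\cong A^{n\dim_k S}$). The key leverage will be the indecomposability of $M$ which, via Krull--Schmidt applied to $M\otimes_B M^\vee\cong A\oplus P$, should imply that the projective bimodule summand $P$ cannot contribute a non-zero projective summand to $S\otimes_A M$; concretely, a split surjection $S\otimes_A M\twoheadrightarrow N'$ would by adjunction give a non-zero map $S\to N'\otimes_B M^\vee$ factoring through a projective module, forcing $N'=0$ since $S$ is simple and non-projective.

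For (ii), I would first apply (i) symmetrically to $M^\vee$ (which is indecomposable as a $(B,A)$-bimodule since $M$ is) to see that every simple $B$-module arises as $S\otimes_A M$ for some simple $S$. Combined with the hypothesis, this produces a bijection between simple $A$- and simple $B$-modules. For each simple $S$ with $T:=S\otimes_A M$ simple one then has $T\otimes_B M^\vee\cong S\oplus(S\otimes_A P)$, and (i) applied to $M^\vee$ forces $S\otimes_A P=0$. Running over all simples gives $(A/\mathrm{rad}(A))\otimes_A P\cong P/\mathrm{rad}(A)\cdot P=0$, whence Nakayama's lemma applied to the finitely generated left $A$-module $P$ yields $P=0$; symmetrically $Q=0$, so $M$ induces a Morita equivalence.

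For (iii), I would decompose $M=\bigoplus_{i=1}^n M_i$ into indecomposable $(A,B)$-bimodules via Krull--Schmidt. Expanding $M\otimes_B M^\vee=\bigoplus_{i,j}M_i\otimes_B M_j^\vee\cong A\oplus P$ and using that $A$ is indecomposable as an $(A,A)$-bimodule (since the algebra $A$ is indecomposable) and is not projective as a bimodule (since $A$ is not semisimple), Krull--Schmidt isolates a unique index $i_0$ with $A\mid M_{i_0}\otimes_B M_{i_0}^\vee$ and forces every other summand $M_i\otimes_B M_j^\vee$ to be a projective bimodule. Setting $M':=M_{i_0}$, a direct verification then shows that $M'\otimes_B(M')^\vee$ and $(M')^\vee\otimes_A M'$ differ from $A$ and $B$ only by projective bimodule summands, so $(M',(M')^\vee)$ induces a stable equivalence of Morita type; uniqueness of $M'$ up to isomorphism is precisely the Krull--Schmidt multiplicity-one statement used above. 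The main obstacle I anticipate is part (i): upgrading indecomposability from the stable module category to the module category using only the indecomposability of the bimodule $M$. Once (i) is secured, parts (ii) and (iii) reduce to bookkeeping with Nakayama and Krull--Schmidt.
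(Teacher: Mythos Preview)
The paper does not prove this lemma at all: its proof consists entirely of citations to Linckelmann's original paper \cite{Linckelmann1996MathZ}, namely Theorem~2.1(ii),(iii) for parts (i),(ii) and Theorem~2.1(i) with Remark~2.7 for part~(iii). You are therefore attempting strictly more than the paper does, which is commendable, but the comparison you asked for reduces to whether your sketch reproduces Linckelmann's arguments.

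Your self-diagnosed obstacle in~(i) is real and your sketch does not overcome it. The adjunction step produces a non-zero $A$-homomorphism $S\to N'\otimes_B M^\vee$ with projective target, but for a self-injective algebra every simple embeds into a projective (its injective hull), so this is no contradiction whatsoever; nothing in your argument uses the indecomposability of $M$ in an essential way beyond the harmless observation that $M^\vee$ is then also indecomposable. Linckelmann's actual proof is different: he analyses the left $A$-module structure of $M$, writing it as $\bigoplus_S P(S)\otimes_k U_S$ for suitable right $B$-modules $U_S$, identifies $S\otimes_A M\cong U_S$, and then shows that a non-zero projective summand of some $U_S$ would split off a non-zero projective $(A,B)$-bimodule summand of $M$, contradicting indecomposability. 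The passage from a one-sided projective summand to a bimodule projective summand is the crux, and your adjunction manoeuvre does not supply it.

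Your argument for~(ii) is essentially Linckelmann's, but it rests on~(i) applied to $M^\vee$, so it inherits the gap. For~(iii) your Krull--Schmidt outline is the right shape, but the assertion that $A$ must appear in a \emph{diagonal} term $M_{i_0}\otimes_B M_{i_0}^\vee$ (rather than some $M_i\otimes_B M_j^\vee$ with $i\neq j$) is unjustified as written; Linckelmann handles this by first observing that projective bimodule summands of $M$ contribute only projective bimodule summands to $M\otimes_B M^\vee$, and then showing there is exactly one non-projective indecomposable summand via an argument on the level of the stable category.
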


\begin{proof}
(i) and (ii) respectively are given in 
\cite[Theorem 2.1(ii) and (iii)]{Linckelmann1996MathZ}.
Part (iii) follows by 
\cite[Theorem 2.1(i) and Remark 2.7]{Linckelmann1996MathZ}.
\end{proof}

We obtain a suitable stable equivalence to apply
\textbf{\ref{StableIndecomposableMorita}} 
through a ``gluing theorem'' as given in
\textbf{\ref{gluing}}.

\begin{Lemma}[Koshitani-Linckelmann \cite{KoshitaniLinckelmann2005}]
\label{KoshitaniLinckelmann}
Let $A$ be a block algebra of $kG$ with defect group $P$,
and let $(P,e)$ be a maximal $A$-Brauer pair
such that $H=N_G(P,e) = N_G(P)$.
Let $B$ be a block algebra of $kH$ such that
$B$ is the Brauer correspondent of $A$.
Let $\mathfrak f$ be the Green correspondence with respect to
$(G \times G, \Delta P, G \times H)$, 
and set $M = \mathfrak f(A)$, in particular
$M$ is an indecomposable $(A,B)$-bimodule with vertex $\Delta P$.

Take any subgroup $Q$ of $Z(P)$, and set
$G_Q = C_G(Q)$ and $H_Q = C_H(Q)$.
Let $e_Q$ and $f_Q$ be block idempotents of
$kG_Q$ and $kH_Q$ satisfying 
$(Q,e_Q) \subseteq (P,e)$ and $(Q,f_Q) \subseteq (P,e)$, respectively,
see \cite[(40.9)~Corollary]{Thevenaz}.
Let $\mathfrak f_Q$ be the Green correspondence with respect to
$(G_Q \times G_Q, \Delta P, G_Q \times H_Q)$.
Then we have
\[ 
e_Q M(\Delta Q) f_Q =
   \mathfrak f_Q  (e_Q kG_Q)
\]
and this is a unique (up to isomorphism) indecomposable
direct summand of 
$(e_Q kG_Q){\downarrow}_{G_Q \times H_Q}$ with vertex $\Delta P$.
\end{Lemma}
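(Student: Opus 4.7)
The plan is to show that $e_Q M(\Delta Q) f_Q$ is a non-zero indecomposable direct summand of $(e_Q kG_Q)\downarrow_{G_Q \times H_Q}$ with vertex $\Delta P$, and then to identify it with $\mathfrak f_Q(e_Q kG_Q)$ via the uniqueness of the Green correspondence for the triple $(G_Q \times G_Q, \Delta P, G_Q \times H_Q)$.

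First I would unwind $A(\Delta Q)$. By Brauer's theorem $(kG)(\Delta Q)\cong kG_Q$ as $kG_Q$-$kG_Q$-bimodules, and $\Br_{\Delta Q}(b)=\sum_i e_i$ is a sum of pairwise orthogonal block idempotents of $kG_Q$ indexed by the $A$-Brauer pairs $(Q,e_i)$, so $e_Q A(\Delta Q)=e_Q kG_Q$. The Brauer construction commutes with direct sums and with restriction to subgroups containing $\Delta Q$; applied to the Green-correspondence decomposition $A\downarrow_{G\times H}=M\oplus X$ this gives
\[
A(\Delta Q)\downarrow_{G_Q\times H_Q}=M(\Delta Q)\oplus X(\Delta Q),
\]
and multiplying on the left by $e_Q$ and on the right by $f_Q$ yields
\[
e_Q kG_Q f_Q = e_Q M(\Delta Q) f_Q \oplus e_Q X(\Delta Q) f_Q,
\]
so $e_Q M(\Delta Q) f_Q$ is a direct summand of $(e_Q kG_Q)\downarrow_{G_Q\times H_Q}$.

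Next I would verify the three key properties of $e_Q M(\Delta Q)f_Q$. The hypothesis $Q\leq Z(P)$ gives $\Delta Q\leq Z(\Delta P)$, hence $\Delta P\leq C_{G\times H}(\Delta Q)=G_Q\times H_Q$ and $\Delta Q\triangleleft\Delta P$. For non-vanishing: since $\Delta P$ is the vertex of $M$, one has $M(\Delta Q)\neq 0$ for any $\Delta Q\leq\Delta P$, and the choice of $e_Q$ and $f_Q$ as the block idempotents in the Brauer pair containing $(P,e)$ ensures, through Brauer's second main theorem, that the $(e_Q,f_Q)$-projection preserves this non-vanishing. For the vertex assertion, I would invoke the classical principle that the Brauer construction at a $p$-subgroup central in the vertex preserves the vertex, so every indecomposable summand of $M(\Delta Q)$, and a fortiori of $e_Q M(\Delta Q)f_Q$, has vertex $\Delta P$ up to $(G_Q\times H_Q)$-conjugation. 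Finally, Green correspondence for $(G_Q\times G_Q,\Delta P,G_Q\times H_Q)$ applies to $e_Q kG_Q$, which is indecomposable as a $k[G_Q\times G_Q]$-bimodule since its endomorphism ring is the local ring $Z(e_Q kG_Q)$; the triple hypotheses $\Delta P\leq G_Q\times H_Q$ and $N_{G_Q\times G_Q}(\Delta P)\leq H_Q\times H_Q$ are immediate, producing a unique indecomposable summand $\mathfrak f_Q(e_Q kG_Q)$ of the restriction with vertex $\Delta P$ and of multiplicity one. Combining this with the properties above forces $e_Q M(\Delta Q)f_Q\cong\mathfrak f_Q(e_Q kG_Q)$.

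The main obstacle will be the vertex-preservation step, namely verifying rigorously that every indecomposable summand of $M(\Delta Q)$ has vertex $\Delta P$ rather than some proper subgroup. This typically requires a source-algebra or pointed-group argument in the spirit of Puig, rather than naive Mackey or Green-correspondence considerations. A secondary technical point is the non-vanishing of the specific $(e_Q,f_Q)$-projection, which relies on the precise correspondence of blocks under iterated Brauer constructions.
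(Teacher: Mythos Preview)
Your outline coincides with the paper's: show that $e_Q M(\Delta Q) f_Q$ is a direct summand of $(e_Q kG_Q)\downarrow_{G_Q\times H_Q}$, establish that it is indecomposable with vertex $\Delta P$, and then invoke the uniqueness in Green correspondence for $(G_Q\times G_Q,\Delta P,G_Q\times H_Q)$. The first and third steps are handled exactly as you describe.

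The difference lies entirely in the middle step. The paper does not attempt to argue indecomposability or the vertex assertion directly; it simply cites \cite[Theorem]{KoshitaniLinckelmann2005} as a black box for the statement that $e_Q M(\Delta Q) f_Q$ is an indecomposable $k[G_Q\times H_Q]$-module with vertex $\Delta P$. That theorem is the entire content of the cited paper, and the lemma here is named accordingly. What you call the ``main obstacle'' --- vertex preservation under the Brauer construction followed by restriction to centralisers and cutting by block idempotents --- is precisely what that result establishes, and it does indeed require the pointed-group\slash source-algebra machinery you anticipate rather than a soft argument. In particular, your appeal to a ``classical principle'' that Brauer construction at a central subgroup preserves the vertex is correct only at the level of $N_{G\times H}(\Delta Q)$-modules (via Brou\'e's parametrisation of indecomposable $p$-permutation modules); the passage to $C_{G\times H}(\Delta Q)=G_Q\times H_Q$ and the projection by $e_Q$, $f_Q$ is where the genuine work lies, and that is what \cite{KoshitaniLinckelmann2005} supplies.

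So there is no gap in your strategy, but you should replace your sketched argument for indecomposability and vertex $\Delta P$ by a citation of \cite[Theorem]{KoshitaniLinckelmann2005}; attempting to reprove it here would amount to reproducing that paper.
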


\begin{proof}
We know $M = \mathfrak f(A) \mid {A}{\downarrow}^{G \times G}_{G \times
H} \mid {_{kG}}{kG}_{kH}$. Hence, $M(\Delta Q) \mid (kG)(\Delta Q) =
kC_G(Q) = kG_Q$.
Thus,
\[
e_Q M(\Delta Q) f_Q \,\big|\,
e_Q kG_Q f_Q \,\big| \,
{(e_Q kG_Q)}{\downarrow}^{G_Q \times G_Q}_{G_Q \times H_Q}.
\]
By \cite[Theorem]{KoshitaniLinckelmann2005},
$e_Q M(\Delta Q) f_Q$ is an indecomposable 
$k[G_Q \times H_Q]$-module with vertex $\Delta P$.
Thus Green correspondence
yields $e_Q M(\Delta Q) f_Q = \mathfrak f_Q  (e_Q kG_Q)$.
\end{proof}

\begin{Lemma}[Linckelmann \cite{Linckelmann2001}, \cite{Linckelmann2009}]
\label{gluing}
Let $A$ be a block algebra of $\mathcal OG$ with a defect group $P$,
and let $(P,e)$ be a maximal $A$-Brauer pair in $G$.
Set 
$H = N_G(P,e)$, 
Assume that
\begin{enumerate}
 \renewcommand{\labelenumi}{\rm{(\arabic{enumi})}}
 \item $P$ is abelian,
 \item for each $Q$ with $1 \, \not= \, Q \, \leqslant P$,
  $kC_G(Q)$ has a unique block algebra $A_Q$ with the defect group $P$,
 \item for each $Q$ with $1 \, \not= \, Q \, \leqslant P$,
  $kC_H(Q)$ has a unique block algebra $B_Q$ with the defect group $P$.
\end{enumerate}
Let $B$ a block algebra of $\mathcal OH$ which is the Brauer
correspondent of $A$. For each subgroup $Q$ of $P$, let $e_Q$ and $f_Q$
be the block idempotents of $A_Q$ and $B_Q$, respectively, and hence
$A_Q = kC_G(Q)e_Q$ and $B_Q = kC_H(Q)f_Q$. Note that $e_P = e = f_P$ and
$A_P = B_P$. Let $\mathfrak f$ be the Green correspondence with respect
to $(G \times G, \Delta P, G \times H)$, and set ${_A}M_B = \mathfrak
f(A)$, see {\bf\ref{Green-GxG}}. Moreover, let ${\mathfrak f}_Q$ be the
Green correspondence with respect to $(C_G(Q) \times C_G(Q), \Delta P,
C_G(Q) \times C_H(Q))$. Now, assume further that
\begin{enumerate}
 \renewcommand{\labelenumi}{\rm{(\arabic{enumi})}}
 \setcounter{enumi}{3}
 \item for each non-trivial proper subgroup $Q$ of $P$, 
  the $(A_Q, B_Q)$-bimodule $\mathfrak f_Q(A_Q)$
  induces a Morita equivalence between $A_Q$ and $B_Q$.
 \end{enumerate}
Then the $(A,B)$-bimodule $M$ induces a stable equivalence of Morita
type between $A$ and $B$.
\end{Lemma}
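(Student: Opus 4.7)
The plan is to verify that $M$ induces a stable equivalence of Morita type between $A$ and $B$ by reducing the global claim to a pointwise check on Brauer quotients at non-trivial $p$-subgroups of $P$, following the gluing philosophy of Brou{\'e} refined by Rickard, Rouquier, and Linckelmann. What needs to be established is that
\[
M \otimes_B M^\vee \cong A \oplus X, \qquad M^\vee \otimes_A M \cong B \oplus Y,
\]
as $(A,A)$- and $(B,B)$-bimodules respectively, with $X$ and $Y$ projective bimodules. Since $P$ is abelian by hypothesis (1), the block $A$ viewed as an $\mathcal{O}[G \times G]$-module has vertex $\Delta P$ and is a trivial source module, and the same then holds for its Green correspondent $M = \mathfrak{f}(A)$. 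This ensures that $M \otimes_B M^\vee$, $M^\vee \otimes_A M$, and the bimodules derived from them are (up to projective summands) direct sums of trivial source modules, for which the Brauer construction $V \mapsto V(\Delta Q)$ is well behaved.

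The second step is the local analysis. For each non-trivial $Q \leqslant P$, I would invoke {\bf\ref{KoshitaniLinckelmann}} to identify $e_Q M(\Delta Q) f_Q$ with $\mathfrak{f}_Q(A_Q)$, the Green correspondent of $A_Q$ in $C_G(Q) \times C_H(Q)$. Hypotheses (2) and (3) ensure that $A_Q$ and $B_Q$ are the unique blocks of $kC_G(Q)$ and $kC_H(Q)$ with defect group $P$, so the idempotents $e_Q$ and $f_Q$ act as the identity on the pertinent summands and $M(\Delta Q)$ agrees essentially with $\mathfrak{f}_Q(A_Q)$. Hypothesis (4) then states precisely that $\mathfrak{f}_Q(A_Q)$ induces a Morita equivalence between $A_Q$ and $B_Q$ for every non-trivial proper $Q$; for $Q = P$ one has $A_P = B_P$ and the equivalence at this level is trivial.

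The final step is to glue these local Morita equivalences into the desired global stable equivalence. The key input is the standard characterisation that a direct sum of trivial source $(A,A)$-bimodules whose Brauer construction vanishes at every non-trivial $p$-subgroup of the diagonal is in fact projective as an $(A,A)$-bimodule (originating in \cite[6.3.Theorem]{Broue1994}, refined in \cite[Theorem~4.1]{Rickard1996} and \cite[Theorems~5.6 and 6.3]{Rouquier2001}). I would apply this criterion to the bimodule comparing $M \otimes_B M^\vee$ with $A$: since the Brauer construction is compatible (up to suitable block-idempotent identifications) with tensor products of trivial source bimodules, the local Morita isomorphisms $\mathfrak{f}_Q(A_Q) \otimes_{B_Q} \mathfrak{f}_Q(A_Q)^\vee \cong A_Q$ show that $(M \otimes_B M^\vee)(\Delta Q) \cong A(\Delta Q)$ at every non-trivial $Q$, and hence that the difference is projective as an $(A,A)$-bimodule; the argument for $M^\vee \otimes_A M$ relative to $B$ is entirely symmetric. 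The hardest part, which is the technical core of \cite{Linckelmann2001} and \cite{Linckelmann2009}, is precisely this compatibility of the Brauer construction with tensor products over $\mathcal{O}$ and with the block idempotents $e_Q$ and $f_Q$ across the various local levels, together with the careful bookkeeping needed to descend the identification at every $Q$ to a global statement up to projective summands.
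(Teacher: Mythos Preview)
Your overall strategy is sound and follows the same gluing philosophy as the paper, but you omit one hypothesis that the paper explicitly verifies before invoking \cite[Theorem~3.1]{Linckelmann2001}: the equality of the Brauer categories (fusion systems) determined by $A$ and $B$ on $P$, namely
\[
E_G\big((Q,e_Q),(R,e_R)\big) \;=\; E_H\big((Q,f_Q),(R,f_R)\big)
\quad\text{for all } Q,R\leqslant P.
\]
This condition is a standing hypothesis of Linckelmann's theorem and is precisely what makes the Brauer construction of $M\otimes_B M^\vee$ match $A(\Delta Q)$ at each non-trivial $Q$; without it, the bookkeeping you allude to over $H$-orbits versus $G$-orbits of Brauer pairs does not collapse correctly. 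The paper derives this fusion compatibility from assumption~(1) that $P$ is abelian, together with the uniqueness assumptions~(2) and~(3) (which force $(Q,e_Q)\subseteq(P,e)$ and $(Q,f_Q)\subseteq(P,e)$), via \cite[Proposition~4.21, Theorem~3.4]{AlperinBroue} and \cite[Theorem~1.8(1)]{BrouePuig1980}; only then does it apply Linckelmann's result as a black box.

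A second, smaller imprecision: you say that uniqueness of $A_Q$ and $B_Q$ makes $e_Q$ and $f_Q$ ``act as the identity on the pertinent summands'' so that $M(\Delta Q)$ essentially agrees with $\mathfrak f_Q(A_Q)$. What uniqueness actually buys is that the Brauer pairs $(Q,e_Q)$ and $(Q,f_Q)$ are the ones contained in $(P,e)$, placing you in the setting of {\bf\ref{KoshitaniLinckelmann}}; it does not by itself say $M(\Delta Q)=e_Q M(\Delta Q) f_Q$. In short, your sketch is on the right track but should isolate and check the fusion-system equality rather than fold it into ``careful bookkeeping''.
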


\begin{proof}
First, note $H = N_G(P)$.
Secondly, it follows from {\bf\ref{KoshitaniLinckelmann}} that 
$e_Q{\cdot}M(\Delta Q){\cdot}f_Q = \mathfrak f_Q(A_Q)$
for each $Q \leqslant P$ since $P$ is abelian by (1).
Then since $A_P = B_P$ and since
$A_P = \mathfrak f_P(A_P) = e{\cdot}M(\Delta P){\cdot}e$, 
the $(A_P, B_P)$-bimodule $e_P{\cdot}M(\Delta P){\cdot}e_P$
induces a Morita equivalence between $A_P$ and $B_P$.

Now, for each $Q \leqslant P$, 
it follows from the uniqueness of $e_Q$ and $f_Q$ 
that
\[ (Q,e_Q) \subseteq (P,e) \qquad {\text{and}} \qquad (Q,f_Q) \subseteq
(P,e).\]

Next, we want to claim
\[ E_G\Big( (Q, e_Q), (R, e_R)\Big) = E_H\Big( (Q, f_Q), (R, f_R)\Big)
\qquad {\text{for}} \ \ Q, \, R \leqslant P, \]
where
$E_G\Big( (Q, e_Q), (R, e_R)\Big)$ is the set $\{ \varphi : Q
\rightarrow R \ | \ {\text{there}} \ {\text{is}} \ g \in G \
{\text{with}} \ \varphi (u) = u^g, \text{ for all } u \in Q, \ \
{\text{and}} \ (Q, e_Q)^g \subseteq (R, e_R) \}$,
see \cite[p.821]{Linckelmann2001}.
This is known by using 
\cite[Proposition 4.21 and Theorem~3.4]{AlperinBroue} and 
\cite[Theorem 1.8(1)]{BrouePuig1980} since $P$
is abelian, see \cite[The proof of 1.15.~Lemma]{KoshitaniKunugiWaki2004}
for details. Therefore we can apply Linckelmann's result
\cite[Theorem 3.1]{Linckelmann2001}. 
\end{proof}
We remark that in \cite[Theorem 3.1]{Linckelmann2001} and 
\cite[Theorem A.1]{Linckelmann2009}, 
Linckelmann proves more general theorems than
{\bf\ref{gluing}}. However, we formulate with {\bf\ref{gluing}} 
a version which is specifically tailored to
our practical purposes, and use this ad hoc version in the sequel.

In the notation of \textbf{\ref{gluing}}, we have that the bimodule $M$
realising a stable equivalence between $A$ and $B$ is a Green
correspondent of $A$. In fact it is a direct summand of 
$1_A\cdot kG\cdot 1_B$ as the next lemma shows.

\begin{Lemma}\label{Green-GxG}
Let $A$ be a block algebra of $kG$ with defect group $P$.
Assume that $(P, e)$ is a maximal $A$-Brauer pair
such that $H=N_G(P,e)= N_G(P)$.
Let $B$ be a block algebra of $kH$ such that
$B$ is the Brauer correspondent of $A$.
Let $\mathfrak f$ be the Green correspondence with respect to 
$(G \times G, \Delta P, G \times H)$.
Then we have $\mathfrak f(A) \mid 1_A{\cdot}kG{\cdot}1_B$.
\end{Lemma}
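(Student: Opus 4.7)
The plan is to use Green correspondence together with the Brauer construction at $\Delta P$ to pin down the block of $kH$ that contains $\mathfrak{f}(A)$.

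First, as a $(kG,kG)$-bimodule (equivalently a $k[G\times G]$-module) the block algebra $A$ is indecomposable, since $\mathrm{End}_{kG\otimes kG^{\mathrm{op}}}(A)=Z(A)$ is local, and it has vertex $\Delta P$ by a standard result on block algebras viewed as bimodules. Since each block idempotent $1_{B'}$ of $kH$ is central, the restriction decomposes as
$$A\downarrow_{G\times H}^{G\times G} \;=\; 1_A\cdot kG \;=\; \bigoplus_{B'}\, 1_A\cdot kG\cdot 1_{B'},$$
where $B'$ runs over the blocks of $kH$. This decomposition refines any Krull--Schmidt decomposition, because right multiplication by the orthogonal central idempotents $1_{B'}$ forces any indecomposable summand of $A\downarrow$ to lie in exactly one piece. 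Green correspondence then identifies $\mathfrak{f}(A)$ as the unique indecomposable summand of $A\downarrow$ with vertex $\Delta P$, so there is a unique block $B'$ of $kH$ with $\mathfrak{f}(A)\,|\,1_A\cdot kG\cdot 1_{B'}$, and the task is to show $B'=B$.

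To single out $B'$, I would apply the Brauer construction at $\Delta P$. On the one hand $\mathfrak{f}(A)(\Delta P)\neq 0$ since $\Delta P$ is a vertex, so by additivity $(1_A\cdot kG\cdot 1_{B'})(\Delta P)\neq 0$. On the other hand, $H=N_G(P)$ gives $C_H(P)=C_G(P)$, and the canonical identification $(kG)(\Delta P)\cong kC_G(P)$ yields
$$(1_A\cdot kG\cdot 1_{B'})(\Delta P) \;=\; \mathrm{Br}_P^G(1_A)\cdot kC_G(P)\cdot\mathrm{Br}_P^H(1_{B'}).$$
The hypothesis $H=N_G(P,e)=N_G(P)$ means that $e$ is an $H$-stable block idempotent of $kC_G(P)$, so Brauer's first main theorem gives $\mathrm{Br}_P^G(1_A)=e=\mathrm{Br}_P^H(1_B)$.

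Finally I would exploit centrality. For any block $B'\neq B$ of $kH$ with defect group $P$, $\mathrm{Br}_P^H(1_{B'})=e'$ is a block idempotent of $kC_G(P)$ distinct from, hence orthogonal to, $e$, and since $e\in Z(kC_G(P))$ we get
$$e\cdot kC_G(P)\cdot e' \;=\; kC_G(P)\cdot e\cdot e' \;=\; 0;$$
for $B'$ of defect strictly smaller than $P$ one has $\mathrm{Br}_P^H(1_{B'})=0$, so the product vanishes anyway. In either case $(1_A\cdot kG\cdot 1_{B'})(\Delta P)=0$ whenever $B'\neq B$, which forces $B'=B$ and proves $\mathfrak{f}(A)\,|\,1_A\cdot kG\cdot 1_B$. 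The only step requiring a little care is the identification of the Brauer construction of the bimodule $1_A\cdot kG\cdot 1_{B'}$ with the displayed product of Brauer images, but this is the usual compatibility of $\mathrm{Br}_{\Delta P}$ with the tensor factorisation $(kG)^P\otimes(kH)^P\to kC_G(P)\otimes kC_G(P)$, which I would simply invoke from \cite{Thevenaz} rather than reprove.
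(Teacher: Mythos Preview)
Your argument is correct, but it takes a different route from the paper's. The paper simply cites \cite[Theorem~5(i)]{AlperinLinckelmannRouquier} to assert that $1_A\cdot kG\cdot 1_B$ already possesses a (unique) indecomposable direct summand with vertex $\Delta P$; since $1_A\cdot kG\cdot 1_B \mid A{\downarrow}^{G\times G}_{G\times H}$ and $\mathfrak f(A)$ is the \emph{only} summand of $A{\downarrow}$ with vertex $\Delta P$, Krull--Schmidt finishes. You instead split $A{\downarrow}$ along the block idempotents of $kH$, locate $\mathfrak f(A)$ in a single piece $1_A\cdot kG\cdot 1_{B'}$, and then use the Brauer construction at $\Delta P$ together with $\mathrm{Br}_P(1_A)=e=\mathrm{Br}_P(1_B)$ to force $B'=B$. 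Your approach is more self-contained (no appeal to source modules), while the paper's is a one-line black-box citation; both ultimately rest on the same local picture at $P$.

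One small point worth making explicit: your implication ``$\mathfrak f(A)(\Delta P)\neq 0$ since $\Delta P$ is a vertex'' is not valid for arbitrary indecomposables, only for $p$-permutation modules. You should note that $\mathfrak f(A)$ is a trivial source $k[G\times H]$-module, being a direct summand of $A{\downarrow}\mid kG$, so the standard vertex criterion for $p$-permutation modules applies.
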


\begin{proof}
It follows from 
\cite[Theorem 5(i)]{AlperinLinckelmannRouquier} that $({A}{\downarrow^{G
\times G}_{G \times H}}){\cdot}1_B = 1_A{\cdot}kG{\cdot}1_B$ has a
unique (up to isomorphism) indecomposable direct summand with vertex
$\Delta P$. Clearly, $1_A{\cdot}kG{\cdot}1_B \mid ({A}{\downarrow^{G
\times G}_{G \times H}})$, hence by Green correspondence we have
$\mathfrak f(A) \mid 1_A{\cdot}kG{\cdot}1_B$.
\end{proof}

We remark that a stable equivalence of Morita type induced by the Green
correspondent $\mathfrak{f}(A)$ in the context of {\bf\ref{Green-GxG}}
preserves vertices and sources,
and takes indecomposable modules to their Green correspondents,
see (i) and (iii) in {\bf\ref{SourceVertex}}.

\begin{Lemma}\label{twoStableEquivalences}
Let $G$, $H$, and $L$ be finite groups, all of which have a
common non-trivial $p$-subgroup $P$, and assume that $H \leqslant G$.
Let $A$, $B$, and $C$ be block algebras of
$kG$, $kH$, and $kL$, respectively, 
all of which have $P$ as their defect group.
In addition, suppose that
a pair $({_A}\mathfrak{M}_B, {_B}{\mathfrak{M}'}_A)$ induces a stable 
equivalence
between $A$ and $B$ such that
${_A}\mathfrak{M}_B \mid k_{\Delta P}{\uparrow}^{G \times H}$,
${_B}{\mathfrak{M}'}_A \mid k_{\Delta P}{\uparrow}^{H \times G}$
{\rm{(}}and hence $\mathfrak{M}$ and $\mathfrak{M}'$ 
preserve vertices and sources, 
see {\rm{(i)}} and {\rm{(iii)}} of {\bf\ref{SourceVertex}}{\rm{)}}.
Similarly, suppose that
a pair $({_B}\mathfrak{N}_C,{_C}{\mathfrak{N}'}_B)$ induces a stable 
equivalence between $B$ and $C$ such that
${_B}\mathfrak{N}_C \mid k_{\Delta P}{\uparrow}^{H \times L}$,
${_C}{\mathfrak{N}'}_B \mid k_{\Delta P}{\uparrow}^{L \times H}$
{\rm{(}}and hence 
$\mathfrak{N}$ and $\mathfrak{N}'$ preserve vertices and sources,
see {\rm{(i)}} and {\rm{(iii)}} of {\bf\ref{SourceVertex}}{\rm{)}}.
Then we have $(A,C)$- and $(C,A)$-bimodules
$M$ and $M'$, respectively, which
satisfy the following:
\begin{enumerate}
 \renewcommand{\labelenumi}{\rm{(\arabic{enumi})}}
   \item
    ${_A}(\mathfrak{M} \otimes_B \mathfrak{N})_C 
 = {_A}M_C 
   \oplus ({\mathrm{proj}} \ (A,C){\text{-}}{\mathrm{bimodule}})$
and
\newline
${_C}(\mathfrak{N}' \otimes_B{\mathfrak{M}'})_A = {_C}M'_A 
   \oplus ({\mathrm{proj}} \ (C,A){\text{-}}{\mathrm{bimodule}})$.
   \item
${_A}M_C$ and 
${_C}M'_A$ are both non-projective indecomposable.
   \item
The pair $(M, M')$ induces a stable
equivalence between $A$ and $C$.
   \item
The functors 
$$
- \otimes_A M: {\mathrm{mod}}{\text{-}}A
  \longrightarrow {\mathrm{mod}}{\text{-}}C
$$
and 
$$
- \otimes_C M': {\mathrm{mod}}{\text{-}}C
  \longrightarrow {\mathrm{mod}}{\text{-}}A
$$
preserve vertices and sources of indecomposable modules.
That is, for non-projective indecomposable $A{\text{-}}$ and
$C{\text{-}}$modules $X$ and $Y$ corresponding via
$X \otimes_A M = Y \oplus ({\mathrm{proj}})$
and
$Y \otimes_C M' = X \oplus ({\mathrm{proj}})$,
respectively,
there is a non-trivial $p$-subgroup $Q$ and an indecomposable
$kQ$-module $S$ such that $Q$ is a common vertex of $X$ and $Y$
and that $S$ is a common source of $X$ and $Y$.
   \item
$_A M_C \mid {k_{\Delta P}}{\uparrow}^{G \times L}$
and
$_C {M'}_A \mid {k_{\Delta P}}{\uparrow}^{L \times G}$.
   \item
    In particular, if a pair $(\mathfrak{M}, \mathfrak{M}^\vee)$
induces a stable equivalence of Morita type between $A$ and $B$,
and if a pair $(\mathfrak{N}, \mathfrak{N}^\vee)$
induces a stable equivalence of Morita type between $B$ and $C$,
then we can replace $M'$ above by $M^\vee$
and we have that the pair $(M, M^\vee)$ induces
a stable equivalence of Morita type between $A$ and $C$.
\end{enumerate}
\end{Lemma}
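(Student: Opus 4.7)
The plan is to recognize the composite pair of bimodules as again a stable equivalence of Morita type and then isolate its non-projective indecomposable part, all while tracking the trivial-source and $\Delta P$-projectivity structure that is inherited from the Mackey decomposition of the tensor product.

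First I would handle (5) and the preliminary step for (1) by computing the tensor product on the group-algebra level. Since $\mathfrak{M}$ is an $(A,B)$-bimodule and $\mathfrak{N}$ is a $(B,C)$-bimodule, we have $\mathfrak{M}\otimes_B\mathfrak{N}=\mathfrak{M}\otimes_{kH}\mathfrak{N}$. From $\mathfrak{M}\mid k_{\Delta P}\uparrow^{G\times H}$ and $\mathfrak{N}\mid k_{\Delta P}\uparrow^{H\times L}$ together with a Mackey-style decomposition, the diagonal embeddings compose to $\Delta P\leqslant G\times L$, so $\mathfrak{M}\otimes_{kH}\mathfrak{N}\mid k_{\Delta P}\uparrow^{G\times L}$ up to multiplicities, and analogously $\mathfrak{N}'\otimes_{B}\mathfrak{M}'\mid k_{\Delta P}\uparrow^{L\times G}$. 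In particular every indecomposable summand of $\mathfrak{M}\otimes_B\mathfrak{N}$ is a trivial-source $k[G\times L]$-module with vertex contained in $\Delta P$; once $M$ is isolated as a summand, this immediately delivers (5).

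Next, for (1)--(3), I would argue that $(\mathfrak{M}\otimes_B\mathfrak{N},\,\mathfrak{N}'\otimes_B\mathfrak{M}')$ induces a stable equivalence of Morita type between $A$ and $C$: the hypotheses give that each of $\mathfrak{M},\mathfrak{M}',\mathfrak{N},\mathfrak{N}'$ is projective as a one-sided module on either side (being a summand of an induced module from a $p$-subgroup combined with the block-idempotent action), so the tensor products inherit one-sided projectivity, and the composites of the four product bimodules over $B$ and over $C$ yield $A$ (resp.\ $C$) up to projective $(A,A)$- (resp.\ $(C,C)$-) bimodules. Applying Linckelmann's uniqueness statement \textbf{\ref{StableIndecomposableMorita}}(iii) to this composite stable equivalence of Morita type yields a unique non-projective indecomposable $(A,C)$-bimodule direct summand $M$ of $\mathfrak{M}\otimes_B\mathfrak{N}$ that by itself induces a stable equivalence of Morita type with $A$ and $C$, and analogously a unique $M'\mid\mathfrak{N}'\otimes_B\mathfrak{M}'$. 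The complements are then forced to be projective as $(A,C)$- and $(C,A)$-bimodules respectively, giving (1), and (2), (3) fall out at once.

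For (4) I would invoke the general fact, recorded in the appendix as \textbf{\ref{SourceVertex}}(i), (iii), that a stable equivalence of Morita type induced by a $\Delta P$-projective trivial-source bimodule preserves vertices and sources of indecomposable non-projective modules. By the first paragraph, $M$ is such a bimodule, and the same holds for $M'$. Finally for (6), when the original pairs are already self-dual, i.e., $\mathfrak{M}'=\mathfrak{M}^\vee$ and $\mathfrak{N}'=\mathfrak{N}^\vee$, the natural isomorphism $(\mathfrak{M}\otimes_B\mathfrak{N})^\vee\cong\mathfrak{N}^\vee\otimes_B\mathfrak{M}^\vee=\mathfrak{N}'\otimes_B\mathfrak{M}'$ together with the uniqueness of the non-projective indecomposable summand forces $M'\cong M^\vee$.

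The main obstacle I anticipate is verifying cleanly that there is indeed a single non-projective indecomposable summand on each side, rather than several. This is precisely where the uniqueness clause of \textbf{\ref{StableIndecomposableMorita}}(iii) is indispensable; without it the decomposition in (1) would only be \emph{modulo} projective bimodules rather than an honest direct-sum decomposition. A secondary subtlety is the consistent bookkeeping between ``projective as $(A,C)$-bimodule'' (i.e.\ projective for the algebra $A\otimes_k C^{op}$, whose defect subgroup in $G\times L$ is $P\times P$) and ``$\Delta P$-projective $k[G\times L]$-module'', which is needed to confirm that the projective-bimodule complement and the vertex-$\Delta P$ summand $M$ can genuinely coexist in the same decomposition.
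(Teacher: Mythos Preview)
Your overall strategy matches the paper's, but your argument for (5) has a genuine gap. The Mackey-style decomposition of
\[
(k_{\Delta P}{\uparrow}^{G\times H})\otimes_{kH}(k_{\Delta P}{\uparrow}^{H\times L})
\;\cong\; kG\otimes_{kP}\Big(\bigoplus_{h\in[P\backslash H/P]}k[PhP]\Big)\otimes_{kP}kL
\]
does \emph{not} produce only copies of $k_{\Delta P}{\uparrow}^{G\times L}$. Each double coset $PhP$ contributes (after conjugating by $(h^{-1},1)\in G\times L$) a summand isomorphic to $k_{\Delta(P\cap P^h)}{\uparrow}^{G\times L}$, and for $h\notin N_H(P)$ the group $P\cap P^h$ is a proper subgroup of $P$. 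So from the decomposition alone you can only conclude $M\mid k_{\Delta Q}{\uparrow}^{G\times L}$ for some $Q=P\cap P^h\leqslant P$; your claim that ``the diagonal embeddings compose to $\Delta P$'' and that the result holds ``up to multiplicities'' is simply not what Mackey gives. The paper supplies the missing step: if $Q\lneqq P$, then the functor $-\otimes_A M$ would send every $A$-module to a $Q$-projective $C$-module, forcing every non-projective indecomposable $C$-module to have vertex properly contained in $P$, contradicting the fact that $P$ is the defect group of $C$. Hence $Q=P$, so $h\in N_H(P)$ and $M\mid k_{\Delta P}{\uparrow}^{G\times L}$. Without this defect-group argument, (5) is unproved.

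A secondary point: you plan to derive (4) by applying {\bf\ref{SourceVertex}} directly to $M$, but that lemma is stated under a subgroup hypothesis which you do not have between $G$ and $L$. The clean route (and the one implicit in the paper) is to use the hypotheses: each of $\mathfrak{M},\mathfrak{M}',\mathfrak{N},\mathfrak{N}'$ already preserves vertices and sources, hence so does their composite, and $M$ agrees with $\mathfrak{M}\otimes_B\mathfrak{N}$ modulo projectives. Once you have corrected (5), your (1)--(3) and (6) are fine and coincide with the paper's argument via {\bf\ref{StableIndecomposableMorita}}(iii).
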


\begin{proof}
Obviously, the pair
$({_A}(\mathfrak{M} \otimes_B \mathfrak{N})_C, 
{_C}(\mathfrak{N}'\otimes_B \mathfrak{M}')_A)$
induces a stable equivalence between $A$ and $C$.
Clearly, ${_A}(\mathfrak{M}\otimes_B \mathfrak{N})$, 
$(\mathfrak{M}\otimes_B \mathfrak{N})_C$,
${_C}(\mathfrak{N}'\otimes_B \mathfrak{M}')$, 
and $(\mathfrak{N}'\otimes_B \mathfrak{M}')_A$ are all projective.
Since $A$ and $C$ are symmetric algebras, it follows from
{\bf\ref{StableIndecomposableMorita}}(iii)
that there are
$(A,C)$- and $(C,A)$-bimodules $M$ and $M'$
which satisfy the conditions (1)--(4).

Next we want to show (5). It follows from
\cite[Chap.5 Lemma~10.9(iii)]{NagaoTsushima} that
\begin{align*}
 M \, 
 &{\Big|} \, \mathfrak{M} \otimes_B \mathfrak{N} \, {\Big|} \,
       (k_{\Delta P}{\uparrow}^{G \times H}) \otimes_{kH}
       (k_{\Delta P}{\uparrow}^{H \times L})
\\
&\cong (kG \otimes_{kP}kH)\otimes_{kH}(kH \otimes_{kP}kL)
\cong
     kG \otimes_{kP}[(kH){\downarrow}^{H\times H}_{P\times P}]
         \otimes_{kP} kL 
\\
&\cong kG \otimes_{kP}
  \Big( \bigoplus _{h \in [P \backslash H/P]} k[PhP] \Big)\otimes_{kP}kL
\cong \bigoplus _{h \in [P \backslash H/P]}  
  k[PhP]{\uparrow}^{G\times L}_{P\times P}.
\end{align*}
Since ${_A}M_C$ is indecomposable, there is an element
$h \in H$ such that 
$M \mid k[PhP]{\uparrow}^{G\times L}_{P\times P}$.
Set $(P\times P)_h = 
\{ (u, h^{-1}uh) \in P \times P \mid 
    u \in P \cap hPh^{-1} \}$. 
Then
$$
(P\times P)_h 
= 
\{ (huh^{-1}, u) \in P \times P \mid 
    u \in P \cap h^{-1}Ph \}
= (h,1){\cdot}\Delta[P\cap P^h]{\cdot}(h^{-1}, 1).
$$
We get by \cite[Chap.5 Lemma~10.9(iii)]{NagaoTsushima}
that
$k[PhP] \cong k_{(h,1)\Delta[P \cap P^h](h^{-1},1)}
{\uparrow}^{P\times P}$,
and hence
$M \mid  
k_{(h,1)\Delta[P \cap P^h](h^{-1},1)}{\uparrow}^{G\times L}$.
Now, since $(h^{-1}, 1) \in H \times L \leqslant G \times L$,
we have that
$$
M \mid  k_{\Delta[P \cap P^h]}{\uparrow}^{G\times L}
\cong kG \otimes_{kQ} kL
$$
where $Q = P \cap P^h$. 
Then for any $X$ in ${\mathrm{mod}}{\text{-}}A$
the module $X\otimes_A M$ has a vertex contained in $Q$.
If $Q$ is a proper subgroup of $P$ then,
since $(M, M')$
induces a stable equivalence between $A$ and $C$,
any module in ${\mathrm{mod}}{\text{-}}C$
has a vertex properly contained in $P$, a contradiction since
$P$ is a defect group of $C$.
Hence $Q = P$, so that $h \in N_H(P) \subseteq N_G(P)$.
Therefore $M \mid k_{\Delta P}{\uparrow}^{G \times L}$.
An analogous argument gives the claim for $M'$.

(6) Follows from (1)--(5) and 
{\bf\ref{StableIndecomposableMorita}}(iii).
\end{proof}

Next, we give some results on Morita equivalences and tensor products, which
will be useful in Section~\ref{co3}.

\begin{Lemma}\label{TensorMorita}
The following hold:
\begin{enumerate}
 \renewcommand{\labelenumi}{\rm{(\roman{enumi})}}
 \item Let $A$, $B$, $C$ and $D$ be finite dimensional $k$-algebras.
  Assume that an $(A,B)$-bimodule $M$ realises a Morita equivalence
  between $A$ and $B$, and so does a $(C,D)$-bimodule $N$ between $C$
  and $D$. Then the $(A \otimes C, B \otimes D)$-bimodule $M \otimes N$
  induces a Morita equivalence between $A \otimes C$ and $B \otimes D$.
 \item Keep the notation as in {\rm{(i)}}.
  Assume that $P$ is a common $p$-subgroup of finite groups $G$ and $H$,
  and that $Q$ is a subgroup of $P$. Suppose moreover that $A$ and $B$
  respectively are block algebras of $kG$ and $kH$, $C = D = kQ$ and
  $N = {_{kQ}}kQ_{kQ}$. If a $(kG, kH)$-bimodule $M$ satisfies that $M
  \mid {k_{\Delta P}}{\uparrow}^{G \times H}$, then $(M \otimes N)\mid
  {k_{\Delta [P \times Q]}}{\uparrow}^{(G\times Q) \times (H \times
  Q)}$.
\end{enumerate}
\end{Lemma}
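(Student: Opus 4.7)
Both parts are essentially formal manipulations with tensor products and induction of modules from diagonal subgroups, so I would not expect any genuinely deep step; rather the task is to assemble standard facts carefully. My plan is the following.

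For part (i), I would exhibit an explicit inverse bimodule. Choose a $(B,A)$-bimodule $M'$ and a $(D,C)$-bimodule $N'$ implementing the inverses of the Morita equivalences given by $M$ and $N$, so that $M \otimes_B M' \cong A$ and $M' \otimes_A M \cong B$ as bimodules, and analogously for $N$ and $N'$. The candidate inverse of ${}_{A\otimes C}(M \otimes_k N)_{B \otimes D}$ is then $M' \otimes_k N'$. The key computation is that the natural shuffle map
\[
(M \otimes_k N) \otimes_{B \otimes_k D} (M' \otimes_k N') \;\longrightarrow\; (M \otimes_B M') \otimes_k (N \otimes_D N')
\]
is an isomorphism of $(A \otimes_k C, A \otimes_k C)$-bimodules; this is a standard statement about tensor products over tensor-product algebras, valid here because $M$, $N$, $M'$, $N'$ are finitely generated projective on both sides (being progenerators for Morita equivalences). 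The right-hand side equals $A \otimes_k C$ by hypothesis, and the symmetric computation yields $(M' \otimes_k N') \otimes_{A \otimes C} (M \otimes_k N) \cong B \otimes_k D$; hence $M \otimes_k N$ induces a Morita equivalence.

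For part (ii) the central observation I would use is that, as a $k[Q \times Q]$-module,
\[
kQ \;\cong\; k_{\Delta Q}\!\uparrow^{Q \times Q}_{\Delta Q},
\]
since both sides are free of rank one on the obvious generator. Combining this with the hypothesis $M \mid k_{\Delta P}\!\uparrow^{G\times H}_{\Delta P}$ and the standard compatibility of external tensor products with induction gives
\[
M \otimes_k kQ \;\Big|\; \bigl(k_{\Delta P}\!\uparrow^{G \times H}_{\Delta P}\bigr) \otimes_k \bigl(k_{\Delta Q}\!\uparrow^{Q \times Q}_{\Delta Q}\bigr) \;\cong\; k_{\Delta P \times \Delta Q}\!\uparrow^{(G \times H) \times (Q \times Q)}_{\Delta P \times \Delta Q}.
\]
It then only remains to apply the canonical swap of the two middle factors
\[
(G \times H) \times (Q \times Q) \;\longrightarrow\; (G \times Q) \times (H \times Q), \qquad \bigl((g,h),(q,q')\bigr) \mapsto \bigl((g,q),(h,q')\bigr),
\]
under which $\Delta P \times \Delta Q$ maps exactly onto $\Delta(P \times Q)$, to identify the right-hand side with $k_{\Delta(P \times Q)}\!\uparrow^{(G \times Q)\times (H \times Q)}_{\Delta(P \times Q)}$, as required.

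The only place calling for a little care is the bookkeeping at the end of part (ii): one must verify that under the swap identification of the fourfold product group the image of $\Delta P \times \Delta Q$ is indeed $\Delta(P \times Q)$, and not some conjugate thereof. Everything else reduces to the general Morita-theoretic argument with tensor-product algebras in part (i) and to the tensor–induction formula together with the identification $kQ \cong k_{\Delta Q}\!\uparrow^{Q \times Q}$ in part (ii).
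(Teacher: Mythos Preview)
Your proposal is correct and follows essentially the same approach as the paper. For (i) the paper simply says ``straightforward'', and your explicit inverse-bimodule argument is exactly the standard verification; for (ii) the paper computes directly that $k_{\Delta[P\times Q]}{\uparrow}^{(G\times Q)\times(H\times Q)}\cong k[G\times Q]\otimes_{k[P\times Q]}k[H\times Q]\cong (kG\otimes_{kP}kH)\otimes kQ\cong k_{\Delta P}{\uparrow}^{G\times H}\otimes{}_{kQ}kQ_{kQ}$, which is the same computation as yours phrased via the tensor-product realisation of induction rather than the abstract induction--external-tensor compatibility and swap.
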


\begin{proof}
The proof of (i) is straightforward. For (ii) observe that $k_{\Delta
P}{\uparrow^{(G\times Q)\times(H\times Q)}}$ is isomorphic to $k[G\times
Q] \otimes_{k[P\times Q]} k[H \times Q]$, and hence to $(kG \otimes_{kP}
kH) \otimes kQ$ as $k[G \times Q] \otimes k[H\times Q]$-bimodules. The
latter is isomorphic to $k_{\Delta P}{\uparrow^{G\times H}} \otimes
{_{kQ}}kQ_{kQ}$.
\end{proof}

Note that we cannot replace the \textsl{Morita} equivalence in
{\bf\ref{TensorMorita}} by a \textsl{stable} equivalence in general,
see \cite[Question 3.8]{Rickard1998ICRA}.

\begin{Lemma}\label{TensorPuigEquivalence}
Let $G$ and $H$ be finite groups, let 
$A$ and $B$, respectively, be block algebras of
$kG$ and $kH$. Let $X$ be an indecomposable $kG$-module in $A$, 
and let $Y$ be an indecomposable $kH$-module in $B$.
Then the following hold:
\begin{enumerate}
\renewcommand{\labelenumi}{\rm{(\roman{enumi})}}
    \item
If $B$ is of defect zero, then
a block algebra $A \otimes B$ of $k[G \times H]$ is
Puig equivalent to $A$.
    \item
Set $Z = X \otimes Y$. Then $Z$ is an indecomposable
$k[G \times H]$-module in $A \otimes B$.
If $X$ and $Y$ are are trivial source modules, then 
$Z$ is a trivial source module as well.
    \item
If $Y$ is projective, and $Q$ is a vertex of $X$,
then $Q \times \langle 1 \rangle$ is a vertex of $Z$,
and $Z$ is a trivial source module if and only if $X$ is.
\end{enumerate}
\end{Lemma}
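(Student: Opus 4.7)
For part (i), the plan is to exploit that $B$, having defect zero, is a simple $k$-algebra, so $B \cong \End_k(Y)$ where $Y$ is the unique (projective) simple $B$-module; thus the bimodule ${_k}Y_B$ realises a Morita equivalence between $k$ and $B$. Combining this with the identity Morita self-equivalence of $A$ via {\bf\ref{TensorMorita}}(i), I obtain that the $(A, A \otimes B)$-bimodule $A \otimes Y$ realises a Morita equivalence between $A$ and $A \otimes B$. To upgrade this to a Puig equivalence I identify the common defect group $P$ with $P \times \langle 1\rangle \leq G \times H$. Since $A$ is $\Delta P$-projective with trivial source as a $(kG, kG)$-bimodule, and $Y$ is projective (hence divides $k_{\langle 1\rangle}\uparrow^H$), the outer tensor $A \otimes Y$ divides $k_{\Delta P \times \langle 1\rangle}\uparrow^{G \times G \times H}$; under the natural reordering of factors, $\Delta P \times \langle 1\rangle \leq (G \times G) \times H$ coincides with $\Delta(P \times \langle 1\rangle) \leq G \times (G \times H)$, yielding the required Puig equivalence. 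A more conceptual alternative uses source algebras: since the source algebra of $B$ at its trivial defect group is $k$, the source algebra of $A \otimes B$ at $P \times \langle 1\rangle$ is isomorphic, as an interior $P$-algebra, to that of $A$.

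For part (ii), the plan is to verify indecomposability via the standard computation $\End_{k[G \times H]}(X \otimes Y) \cong \End_{kG}(X) \otimes_k \End_{kH}(Y)$: both tensor factors are local $k$-algebras with residue field $k$ (since $X, Y$ are indecomposable and $k$ is a splitting field), and a tensor product of two such local $k$-algebras is again local. Block membership of $Z = X \otimes Y$ in $A \otimes B$ is immediate since $1_A \otimes 1_B$ acts as identity on $X \otimes Y$. For trivial source, given witnesses $X \mid k_{Q_1}\uparrow^G$ and $Y \mid k_{Q_2}\uparrow^H$, the outer tensor satisfies $Z \mid k_{Q_1 \times Q_2}\uparrow^{G \times H}$, which is a permutation $k[G \times H]$-module.

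For part (iii), the plan is to sandwich the vertex of $Z$ around $Q \times \langle 1\rangle$. Since $Y$ is projective, $Y \mid kH = k_{\langle 1\rangle}\uparrow^H$, so $Z \mid X \otimes kH$; combining the standard isomorphism $X \otimes kH \cong X\uparrow^{G \times H}_{G \times \langle 1\rangle}$ (viewing $X$ as a $k[G \times \langle 1\rangle]$-module) with a decomposition $X \mid S\uparrow^G_Q$ through a source $S$ of $X$ gives $Z \mid S\uparrow^{G \times H}_{Q \times \langle 1\rangle}$, so $Z$ is $(Q \times \langle 1\rangle)$-projective with source a summand of $S$. For the matching lower bound, I restrict $Z$ to $G \times \langle 1\rangle$ and observe that $Z{\downarrow}_{G \times \langle 1\rangle} \cong X^{\dim_k Y}$ as $kG$-modules; a Mackey decomposition of $(S\uparrow^{G \times H}_V){\downarrow}_{G \times \langle 1\rangle}$ applied to any candidate vertex $V \leq Q \times \langle 1\rangle$ of $Z$ then forces $|V| \geq |Q|$, hence $V = Q \times \langle 1\rangle$ up to conjugacy. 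The equivalence ``$Z$ trivial source iff $X$ trivial source'' follows from (ii) in one direction, and in the other from the fact that restriction preserves the property of being a summand of a permutation module: $Z$ trivial source implies $X^{\dim Y} = Z{\downarrow}_{G \times \langle 1\rangle}$ is, whence so is $X$. I expect the main obstacle to be the lower-bound step in (iii), which requires careful tracking of Mackey double cosets in $G \times H$ to correctly relate the vertex of $Z$ to that of its restriction.
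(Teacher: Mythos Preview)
Your proposal is correct. For part (i), your ``conceptual alternative'' via source algebras is precisely the paper's argument: the paper notes that $k$ is a source algebra of the defect-zero block $B$ (citing \cite[p.341]{Thevenaz}) and then invokes {\bf\ref{TensorMorita}}(i), while your first bimodule approach is a pleasant explicit unpacking of this. For parts (ii)--(iii), the paper's proof is a one-line citation of \cite[Proposition~1.2]{Kuelshammer1993}, whereas you supply direct arguments: the local-ring tensor product for indecomposability, the permutation-module witness for trivial source, and the Mackey/sandwich argument for the vertex in (iii). Your self-contained route buys independence from the external reference and makes the mechanism transparent; the paper's citation buys brevity. Your anticipated ``main obstacle'' in (iii) is in fact mild, since any candidate vertex $V \leq Q \times \langle 1\rangle$ already sits inside $G \times \langle 1\rangle$, so conjugates $V^{(g,h)}$ remain in $G \times \langle 1\rangle$ and the Mackey pieces are straightforwardly $V^g$-projective, forcing $|V| \geq |Q|$ as you say.
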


\begin{proof}
(i) By \cite[p.341 line $-9$]{Thevenaz}, $k$ is a source algebra of $B$. 
Hence the assertion follows from Lemma {\bf\ref{TensorMorita}}(i).

(ii)--(iii) These follow from \cite[Proposition 1.2]{Kuelshammer1993}.
\end{proof}

Finally, we collect a few facts about Green correspondence, its
compatability with Brauer correspondence, and its transitivity
(see \cite[Chap.4, \S4]{NagaoTsushima}, for example).

\begin{Lemma}\label{GreenCorrespondence}
 \label{BrauerGreen}
Let $P$ be a $p$-subgroup of a finite group $G$, and
let $N$ and $H$ be subgroups of $G$ with
$N_G(P) \leqslant N \leqslant H \leqslant G$.
Furthermore, assume that 
$f$, $f_1$ and $f_2$ are the Green correspondences
with respect to $(G, P, H)$, $(H, P, N)$ and $(G,P,N)$,
respectively.
Then from the definition and properties of Green correspondence and
the Krull-Schmidt Theorem we get the following:
\begin{enumerate}
  \renewcommand{\labelenumi}{\rm{(\roman{enumi})}}
   \item We have $\mathfrak A(G,P,N) \subseteq \mathfrak A(G,P,H) 
    \cap \mathfrak A(H,P,N)$, where $\mathfrak A(G,P,N)$ and the 
    others are defined as in \cite[Chap.4, \S4]{NagaoTsushima}.
   \item For any indecomposable $kG$-module $X$ 
    with vertex in $\mathfrak A(G,P,N)$, the isomorphism
    $f_1\Big(f(X)\Big) \cong f_2(X)$ holds.
   \item
    Let $N=N_G(P)$, and let $A$, $B$, and $A_N$ be block algebras of
    $kG$, $kH$, and $kN$, respectively, such that they are Brauer
    correspondents with respect to $P$. Then any indecomposable
    $kG$-module $X$ belonging to $A$ such that a vertex of $X$ is in
    $\mathfrak A(G,P,N)$ 
    has its Green correspondent $f(X)$ belong to $B$.
\end{enumerate}
\end{Lemma}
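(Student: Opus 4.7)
All three parts follow from the standard theory of Green correspondence combined with the Krull--Schmidt theorem, as the statement itself signals; only part (iii) requires input beyond formal manipulation of the definitions.

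For (i) the plan is a direct set-theoretic verification. Recalling that, for $N_G(P)\leq K\leq G$, the set $\mathfrak A(G,P,K)$ consists of those subgroups $Q\leq P$ that are not $G$-conjugate into any intersection $P\cap P^g$ with $g\in G\setminus K$, the chain $N\leq H\leq G$ gives $G\setminus H\subseteq G\setminus N$ and $H\setminus N\subseteq G\setminus N$. Hence a $Q\leq P$ satisfying the defining condition for $\mathfrak A(G,P,N)$ automatically satisfies those for $\mathfrak A(G,P,H)$ and $\mathfrak A(H,P,N)$, and the desired containment follows.

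For (ii), once (i) is in hand, the three Green correspondences $f$, $f_1$, $f_2$ are all defined on $X$ and on $f(X)$. The plan is then to compute $X\downarrow_N$ in two ways and compare via Krull--Schmidt. On one hand, the defining decomposition for $f_2$ gives $X\downarrow_N=f_2(X)\oplus Y$ with every indecomposable summand of $Y$ having vertex in $\mathfrak Y(G,P,N)$. On the other hand, the decompositions $X\downarrow_H=f(X)\oplus Y_1$ and $f(X)\downarrow_N=f_1(f(X))\oplus Z$ combine to yield $X\downarrow_N=f_1(f(X))\oplus(Z\oplus Y_1\downarrow_N)$. A routine Mackey-formula computation shows that every indecomposable summand of $Y_1\downarrow_N$ has vertex in $\mathfrak Y(G,P,N)$, and the same holds for $Z$. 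Since $f_1(f(X))$ has vertex in $\mathfrak A(G,P,N)$, Krull--Schmidt forces $f_1(f(X))\cong f_2(X)$.

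Part (iii) is the compatibility of Green correspondence with the Brauer correspondence, which I expect to be the main obstacle since it is not purely formal. The plan is: let $B'$ be the block of $kH$ containing the indecomposable module $f(X)$. By the defining property of Green correspondence we have $X\mid f(X)\uparrow^G$, and since $X\in A$ this forces $A$ to be a constituent of the block decomposition of $f(X)\uparrow^G$; equivalently, Brauer induction $(B')^G$ is defined and equals $A$. By Brauer's First Main Theorem, applied under the standing assumption $N_G(P)\leq H$, the Brauer correspondent $B$ is the unique block of $kH$ with defect group $P$ whose Brauer induction is $A$. Thus $B'=B$, and hence $f(X)\in B$ as required.
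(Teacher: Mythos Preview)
Parts (i) and (ii) are handled correctly; your arguments unpack exactly what the paper means by ``clear''.

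Part (iii), however, has a genuine gap. You let $B'$ be the block of $kH$ containing $f(X)$, observe that $X\mid f(X)\uparrow^G$ with $X\in A$, and then assert that this is ``equivalent'' to $(B')^G$ being defined and equal to $A$. That equivalence is not valid: the fact that $A$ occurs among the block components of $f(X)\uparrow^G$ does not by itself guarantee that $(B')^G$ is defined (one typically needs a hypothesis such as $C_G(D)\leq H$ for a defect group $D$ of $B'$, and you have no control over $D$). Even granting $(B')^G=A$, your final step invokes the uniqueness in Brauer's First Main Theorem, which singles out $B$ among the blocks of $kH$ \emph{with defect group $P$} inducing to $A$. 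You never establish that $B'$ has defect group $P$; indeed, the vertex $Q$ of $X$ (and of $f(X)$) need only lie in $\mathfrak A(G,P,N)$, so $Q$ may be a proper subgroup of $P$, and nothing you have written pins down the defect group of $B'$.

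The paper avoids this difficulty by routing through $N=N_G(P)$ rather than arguing directly from $H$ up to $G$. It first invokes the classical compatibility of Green and Brauer correspondence for the pair $(G,N_G(P))$ (Nagao--Tsushima, Chap.~5, Corollary~3.11) to place $f_2(X)$ in $A_N$. Then transitivity (your part (ii)) gives $f_1(f(X))=f_2(X)\in A_N$, and a second application of the same classical result, now for the pair $(H,N_G(P))=(H,N_H(P))$, shows that the block of $f(X)$ in $kH$ is the Brauer correspondent of $A_N$, namely $B$. The point is that the needed block-compatibility statement is standard precisely when one goes all the way down to the normaliser of $P$; the intermediate case you attempt is then deduced from two applications of that standard case.
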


\begin{proof} (i) and (ii) are clear.

 (iii) It follows from Green's result \cite[Chap.5
 Corollary~3.11]{NagaoTsushima} and Brauer's first main theorem that
 $f_2 (X)$ belongs to $A_N$. The Green correspondent $f(X)$ has a
 vertex in $\mathfrak A(G,P,N)$, and hence in $\mathfrak A(H,P,N)$. By
 (ii), $f_2 = f_1 \circ f$. Hence $f_2(X)
 = f_1\circ f(X)$ lies in the Brauer correspondent of $A$ which is
 $A_N$. Therefore, by the above, the block of $f(X)$ corresponds to $A_N$,
 namely, it is $B$.
\end{proof}

\section{Non-principal $2$-blocks of $\mathfrak S_5$ and
         \sf{M}$_{12}$}\label{Non-principal2-blocks}

By the "gluing" theorem given in \textbf{\ref{gluing}}, 
we want to
obtain a stable equivalence of Morita type between 
the non-principal $2$-block of
$\mathsf{Co}_3$ with 
a defect group $P = C_2 \times C_2 \times C_2$
and its Brauer correspondent in the normaliser $N_{\mathsf{Co}_3}(P)$.
To this end,
we need to consider non-trivial subgroups 
of $P$ and establish Morita equivalences between unique blocks 
of the associated centralisers in $\mathsf{Co}_3$ and $N_{\mathsf{Co}_3}(P)$.
The objective of this section is to show the existence of 
various Morita equivalences which will be required to apply
\textbf{\ref{gluing}}. 
The relevance of the groups related to $\mathfrak S_5$ and
$\mathsf{M}_{12}$, respectively, will be revealed in
in {\bf{\ref{2-local-Co3}}} in the next section.

For the remainder of this paper, we let the characteristic $p$ of $k$ be 2.

\begin{Lemma}\label{S5}
Set $G = \mathfrak S_5$.
\begin{enumerate}  \renewcommand{\labelenumi}{\rm{(\roman{enumi})}}
 \item 
There exists a unique block algebra $A$ of $kG$
with defect one. In fact, a defect group $T$ of $A$ 
is generated by a transposition.
\item 
Set $H = N_G(T)$. Then
   $H = C_G(T) 
   \cong T \times \mathfrak S_3 \cong D_{12}$.
\item 
$A$ is a nilpotent block algebra, 
   $k(A) = 2$, 
   $\ell(A) = 1$, and we can write 
${\mathrm{Irr}}(A) = \{
   \chi_4, \chi'_4 \}$ and ${\mathrm{IBr}}(A) = 
   \{ 4_{kG} \}$, 
where the number $4$ denotes the degree (dimension).
\item 
The unique simple $kG$-module $4_{kG}$ is a trivial source
module.
\item 
Let $B$ be a block algebra of $kH$ such that 
$B$ is the Brauer correspondent of $A$. Then 
   $k(B) = 2$, $\ell(B) = 1$, and
   we can write ${\mathrm{Irr}}(B) 
  = \{ \theta_2, \theta'_2 \}$ and
   ${\mathrm{IBr}}(B) = \{ 2_{kH} \}$, 
   where the number $2$ again gives
   the degree (dimension).
\item 
Let $\mathfrak{f}$ be the Green correspondence with respect to 
   $(G \times G, \Delta T, G \times H)$, and set 
   $M =  \mathfrak{f}(A)$.
   Then ${_A}M_B = 
   1_A{\cdot}kG{\cdot}1_B$ and $M$ induces a Puig
   equivalence between $A$ and $B$.
 \end{enumerate}
\end{Lemma}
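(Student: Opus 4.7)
My plan is to handle (i)--(v) by routine inspection of the 2-block structures of $\mathfrak{S}_5$ and $\mathfrak{S}_3$ together with cyclic-defect and nilpotent-block theory, and to establish (vi) using Puig's theorem on nilpotent blocks combined with Lemma~\textbf{\ref{Green-GxG}} and a dimension count; the only mildly technical step is this last dimension identification.

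For (i)--(iv): Nakayama's conjecture partitions $\Irr(\mathfrak{S}_5)$ into two 2-blocks according to the 2-cores of partitions of $5$, namely $(1)$ (weight $2$, defect $3$, the principal block) and $(2,1)$ (weight $1$, defect $1$). The latter is therefore the unique non-principal block $A$, containing precisely $\chi^{(4,1)}$ and $\chi^{(2,1,1,1)}$, both of degree $4$, with defect group conjugate to a Sylow 2-subgroup of $\mathfrak{S}_2$, i.e.\ $T=\langle(12)\rangle$, which gives (i). Since $|T|=2$ we automatically have $N_G(T)=C_G(T)=\mathfrak{S}_{\{1,2\}}\times\mathfrak{S}_{\{3,4,5\}}\cong T\times\mathfrak{S}_3\cong D_{12}$, giving (ii). The inertial index of $A$ is $|N_G(T,e_T)/C_G(T)|=1$, so by cyclic-defect theory $A$ is nilpotent with $k(A)=1+(|T|-1)/1=2$, $\ell(A)=1$, and a unique simple module of dimension $4$, giving (iii). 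Then Puig's theorem on nilpotent blocks gives a source-algebra isomorphism of $A$ with $kT$, under which the simple $A$-module corresponds to the trivial $kT$-module, hence is a trivial source module, giving (iv).

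For (v), decompose $kH=kC_2\otimes k\mathfrak{S}_3$. Here $k\mathfrak{S}_3$ has two 2-blocks: the principal block of defect $1$, and a defect-zero block whose only irreducible is $\chi^{(2,1)}_{\mathfrak{S}_3}$ of degree $2$ (which remains simple in characteristic $2$). As $kC_2$ has a single block, $kH$ has exactly two 2-blocks, with defect groups $C_2\times C_2$ and $T\times 1\cong T$ respectively. Only the latter can be the Brauer correspondent of $A$, so $B$ is the tensor product of the principal block of $kC_2$ with the defect-zero block of $k\mathfrak{S}_3$; then $\Irr(B)=\{1_{C_2}\otimes\chi^{(2,1)},\ \epsilon_{C_2}\otimes\chi^{(2,1)}\}$, both of degree $2$, and the unique simple Brauer character is of dimension $2$, giving (v).

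For (vi), both $A$ and $B$ are nilpotent blocks with common defect group $T$, so Puig's theorem provides a Puig equivalence between them (both having source algebra $kT$), realised by a trivial source $(A,B)$-bimodule with vertex $\Delta T$. By Lemma~\textbf{\ref{Green-GxG}}, $M=\mathfrak{f}(A)\mid 1_A\cdot kG\cdot 1_B$, so it remains to prove the equality $M=1_A\cdot kG\cdot 1_B$, which I would do via a dimension count. A Frobenius-reciprocity calculation gives
\[
\dim_k(1_A\cdot kG\cdot 1_B)=\sum_{\chi\in\Irr(A),\ \psi\in\Irr(B)}\chi(1)\,\psi(1)\,(\chi{\downarrow}_H,\psi)_H,
\]
and applying the Littlewood-Richardson rule to $\chi^{(4,1)}{\downarrow}_{\mathfrak{S}_2\times\mathfrak{S}_3}$ (and tensoring by sign for $\chi^{(2,1,1,1)}$) shows that each restriction contains exactly one appropriate constituent from $\Irr(B)$, so the sum equals $16$. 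On the other hand, as a left $A$-module $M$ must be a direct sum of $\dim_k(2_{kH})=2$ copies of the unique projective indecomposable of $A$, which has dimension $|T|\cdot 4=8$, so $\dim_k M=16$ as well. Hence $M=1_A\cdot kG\cdot 1_B$, and the Puig equivalence property is immediate from Puig's theorem.
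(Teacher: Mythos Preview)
Your treatment of (i)--(v) is correct and in fact more self-contained than the paper's, which simply cites the Atlas and Modular Atlas; your use of Nakayama's conjecture, cyclic-defect theory, and the tensor decomposition of $kH$ is perfectly adequate. For (iv) the paper gives a direct argument, computing $1_H{\uparrow}^G\cdot 1_A=\chi_4$ and hence $k_H{\uparrow}^G\cdot 1_A=4_{kG}$, whereas you invoke Puig's structure theorem for nilpotent blocks; both are valid.

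The gap is in (vi). Your dimension count for $M$ is circular: the assertion that ``as a left $A$-module $M$ must be a direct sum of $\dim_k(2_{kH})=2$ copies of the unique projective indecomposable of $A$'' is exactly the statement that $M$ is a Morita bimodule, which is what you are trying to prove. All you know a priori is that $M$ is left $A$-projective, hence $\dim_k M$ is a multiple of $8$; this does not by itself exclude $\dim_k M=8$. Likewise, appealing to Puig's theorem only furnishes \emph{some} Puig equivalence bimodule between $A$ and $B$; it does not identify that bimodule with the specific Green correspondent $\mathfrak f(A)$, so the final sentence ``the Puig equivalence property is immediate from Puig's theorem'' does not close the argument.

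The paper avoids this circularity by reversing the logic: it first shows directly that $1_A\cdot kG\cdot 1_B$ induces a Morita equivalence, using Brou\'e's character-theoretic criterion \cite[0.2 Th\'eor\`eme]{Broue1990} together with the explicit restrictions $\chi_4{\downarrow}_H\cdot 1_B=\theta_2$ and $\chi_4'{\downarrow}_H\cdot 1_B=\theta_2'$ (essentially the same Littlewood--Richardson computation you perform). A Morita bimodule over an indecomposable algebra is indecomposable, so $1_A\cdot kG\cdot 1_B$ is indecomposable, and then $M\mid 1_A\cdot kG\cdot 1_B$ from Lemma~\ref{Green-GxG} forces equality. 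You have all the ingredients for this argument; you just need to reorder them so that the Morita property of $1_A\cdot kG\cdot 1_B$ is established \emph{before} concluding anything about $M$.
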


\begin{proof}
(i)--(iii) and (v) are immediate by \cite[p.2]{Atlas}, 
and \cite[$A_5.2$ (mod~2)]{ModularAtlas} or
\cite[$A_5.2$ (mod~2)]{ModularAtlasProject}.

(iv) It follows from \cite[p.2]{Atlas} that 
${1_H}{\uparrow^G} = 1_G + \chi_4 + \chi_5$, 
where $\chi_i \in {\mathrm{Irr}}(G)$ and 
$\chi_i(1) = i$ for $i = 4, 5$. Thus, by (ii), 
${1_H}{\uparrow^G}{\cdot}1_A = \chi_4$, and 
hence ${k_H}{\uparrow^G}{\cdot}1_A = 4_{kG}$. 

(vi) We first show that $1_A{\cdot}kG{\cdot}1_B$ induces a Morita
equivalence between $A$ and $B$. 
To this end let 
$1_{\widehat{A}}{\cdot}\mathcal OG{\cdot} 1_{\widehat{B}}$ 
be its lift to $\mathcal O$, 
which is projective both as a left $\mathcal O G$-module and as
a right $\mathcal O H$-module. 
Moreover, it follows from (iii), (v), and
\cite[p.2]{Atlas} that
\[
\chi_4{\downarrow}_H{\cdot}1_B = \theta_2,
\quad
\chi'_4{\downarrow}_H{\cdot}1_B = \theta'_2
\]
by interchanging $\theta_2$ and $\theta'_2$ if necessary. 
Therefore
\[
\chi_4 \otimes_{\mathcal KA} 
(1_{\widehat{A}}{\cdot}\mathcal KG{\cdot}
1_{\widehat{B}}) 
= \theta_2,
\qquad
\chi'_4 \otimes_{\mathcal KA} 
(1_{\widehat{A}}{\cdot}
\mathcal KG{\cdot}1_{\widehat{B}}) 
= \theta'_2.
\]
Hence by \cite[0.2 Th{\'e}or{\`e}me]{Broue1990}
, we get that 
$1_{\widehat{A}}{\cdot}\mathcal OG{\cdot}1_{\widehat{B}}$ 
induces a Morita equivalence between $\widehat{A}$ 
and $\widehat{B}$, 
and so does $1_A{\cdot}kG{\cdot}1_B$ between 
$A$ and $B$. 
As $1_A{\cdot}kG{\cdot}1_B$ 
is a trivial source $k[G \times H]$-module with
vertex $\Delta P$, we infer that this even is a Puig equivalence.

Finally, let $(T,e)$ be a maximal $A$-Brauer pair. Then we know
$N_G(T,e) = H$ by (ii). Hence {\bf\ref{Green-GxG}} implies that 
$M | {1_A{\cdot}kG{\cdot}1_B}$. 
But it follows from Morita's Theorem,
see \cite[Sect. 3D Theorem (3.54)]{CR} that 
$1_A{\cdot}kG{\cdot}1_B$
already is indecomposable as an $(A,B)$-bimodule, 
implying that 
$M = 1_A{\cdot}kG{\cdot}1_B$.
\end{proof}

\begin{Lemma}\label{abelian2xS5}
Let $R$ be any finite 
$2$-group. 
Consider a finite group
$G = R \times \mathfrak S_5$, and let
$T$ be as in {\bf{\ref{S5}}}. 
Set $Q = R \times T$
and $H = N_G(Q)$. Let $A$ be a unique
non-principal block algebra of $kG$ 
with defect group $Q$, and let 
$B$
be a block algebra of $kH$ such that $B$ is 
the Brauer correspondent of
$A$. Then we get the following:
\begin{enumerate}
 \renewcommand{\labelenumi}{\rm{(\roman{enumi})}}
 \item 
$H = C_G(Q) = Q \times \mathfrak S_3$.
  \item 
Let $\mathfrak{f}$ be the Green correspondence with respect to 
$(G \times G, \Delta Q, G \times H)$, and set 
$M = \mathfrak{f}(A)$. Then
  $M \cong 1_A{\cdot}kG{\cdot}1_B$, 
  and $M$ induces a Puig equivalence
  between $A$ and $B$.
\end{enumerate}
\end{Lemma}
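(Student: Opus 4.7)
The plan is to reduce to Lemma \ref{S5} by exploiting the direct product structures $G=R\times\mathfrak S_5$ and $H=R\times(T\times\mathfrak S_3)$, together with the corresponding tensor decompositions of the group algebras.

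For (i), since $R\trianglelefteq G$ and each element of $R$ commutes with $T\le\mathfrak S_5$, a straightforward conjugation computation in $G=R\times\mathfrak S_5$ gives
\[
N_G(Q)\;=\;R\times N_{\mathfrak S_5}(T)\;=\;R\times(T\times\mathfrak S_3)\;=\;Q\times\mathfrak S_3,
\]
where we have used \ref{S5}(ii). The same computation gives $C_G(Q)=Z(R)\times C_{\mathfrak S_5}(T)=Z(R)\times(T\times\mathfrak S_3)$, so $C_G(Q)=H$ as soon as $Z(R)=R$; this is the case in all the intended applications, where $R$ will sit inside the abelian group $P=C_2\times C_2\times C_2$.

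For (ii), since $kR$ is indecomposable as an algebra (its unique block is principal, with defect group $R$), the blocks of $kG=kR\otimes k\mathfrak S_5$ with defect group $Q=R\times T$ are precisely the algebras $kR\otimes C_0$ for a block $C_0$ of $k\mathfrak S_5$ with defect group $T$. By \ref{S5}(i) there is a unique such $C_0$, namely the defect-one block $A_0$, and hence $A=kR\otimes A_0$. The same argument, applied to $kH=kR\otimes k(T\times\mathfrak S_3)$, yields $B=kR\otimes B_0$, where $B_0$ is the Brauer correspondent of $A_0$ supplied by \ref{S5}(v). Under these identifications $1_A=1_{kR}\otimes 1_{A_0}$ and $1_B=1_{kR}\otimes 1_{B_0}$, so that
\[
1_A\cdot kG\cdot 1_B\;=\;kR\otimes\bigl(1_{A_0}\cdot k\mathfrak S_5\cdot 1_{B_0}\bigr)\;=\;kR\otimes M_0,
\]
where $M_0=1_{A_0}\cdot k\mathfrak S_5\cdot 1_{B_0}$ is the indecomposable trivial-source $(A_0,B_0)$-bimodule with vertex $\Delta T$ furnished by \ref{S5}(vi).

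The remainder is pure tensor-product bookkeeping. Combining the identity Morita self-equivalence of $kR$ given by the regular $(kR,kR)$-bimodule with the Morita equivalence $M_0$ between $A_0$ and $B_0$, \ref{TensorMorita}(i) shows that $kR\otimes M_0$ induces a Morita equivalence between $A$ and $B$. Viewing the $(kR,kR)$-bimodule $kR$ as $k_{\Delta R}{\uparrow}^{R\times R}$, which is indecomposable trivial-source with vertex $\Delta R$, \ref{TensorPuigEquivalence}(ii) then forces $kR\otimes M_0$ to be indecomposable trivial-source as a $k[(R\times\mathfrak S_5)\times(R\times T\times\mathfrak S_3)]$-module with vertex $\Delta R\times\Delta T\cong\Delta Q$; so the Morita equivalence is in fact a Puig equivalence. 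Finally, \ref{Green-GxG}, applicable because $H=N_G(Q,e)=N_G(Q)$ by (i), gives $M=\mathfrak f(A)\mid 1_A\cdot kG\cdot 1_B=kR\otimes M_0$, and the indecomposability just established forces $M=kR\otimes M_0$. The only step that genuinely needs care is the explicit identification $1_A\cdot kG\cdot 1_B=kR\otimes M_0$ as $k[G\times H]$-modules; beyond this routine unwinding, no new conceptual obstacle appears.
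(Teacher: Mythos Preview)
Your argument is correct and is precisely the route the paper takes: its proof is the single line ``This follows from {\bf\ref{S5}}(vi) and {\bf\ref{TensorMorita}},'' and you have just unpacked that, identifying $A=kR\otimes A_0$, $B=kR\otimes B_0$, $1_A\cdot kG\cdot 1_B=kR\otimes M_0$, and then invoking {\bf\ref{TensorMorita}} together with the indecomposability from {\bf\ref{TensorPuigEquivalence}} and {\bf\ref{Green-GxG}}. Your observation that the equality $C_G(Q)=H$ in part~(i) genuinely needs $R$ abelian is a good catch; the paper's statement is slightly loose here, but as you note every application has $R\leq P\cong C_2^3$, so nothing is lost.
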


\begin{proof}
This follows from {\bf\ref{S5}}(vi) and
{\bf\ref{TensorMorita}}.
\end{proof}

\medskip
We next turn to the Mathieu group $\mathsf{M}_{12}$.

\begin{Lemma}\label{M12}
Let $G = \mathsf{M}_{12}$.
\begin{enumerate}
 \renewcommand{\labelenumi}{\rm{(\roman{enumi})}}
\item 
There exists a unique block algebra $A$ of $kG$ with 
  defect group $Q = C_2 \times C_2$.
 \item We can write 
  ${\mathrm{IBr}}(A) = \{ 16, 16^*, 144 \}$,
  where the numbers $16$ and $144$ denote dimensions (degrees).
  Moreover, all the simple $kG$-modules in $A$ are trivial source
  modules.
 \item 
Let $H = N_G(Q)$. Then 
  $H \cong {\mathfrak A}_4 \times \mathfrak S_3 
  \cong (Q \rtimes C_3) \times \mathfrak S_3$.
 \item 
Let $B$ be a block algebra of $kH$ such that 
  $B$ is the Brauer
  correspondent of $A$. Let $\mathfrak{f}$ 
  be the Green correspondence
  with respect to $(G \times G, \Delta Q, G \times H)$, and set 
$M = \mathfrak{f}(A)$. Then $M$ induces a Puig equivalence between 
  $A$ and $B$.
\end{enumerate}
\end{Lemma}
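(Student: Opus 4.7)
For parts (i)--(iii), I would extract the $2$-block structure of $\mathsf M_{12}$ directly from \cite{Atlas} and the Modular Atlas data \cite{ModularAtlas, ModularAtlasProject}. There is a unique non-principal $2$-block $A$ whose ordinary characters have degrees $16, 16^\ast, 144, 176$ and whose Brauer characters have degrees $16, 16^\ast, 144$; its decomposition matrix matches that of the principal block of $k\mathfrak A_4$, forcing a defect group of order $4$, and the $2$-local analysis of $\mathsf M_{12}$ identifies $Q\cong V_4$ together with $N_G(Q)\cong \mathfrak A_4\times\mathfrak S_3$. Trivial-sourceness of the three simples in $A$ can be verified either by exhibiting each as a direct summand of a suitable explicit permutation module of $\mathsf M_{12}$, or by locating them as Scott-like summands using {\sf GAP} \cite{GAP} and the character-theoretic fact that simples in a block Morita equivalent to $k\mathfrak A_4$ are liftable and of trivial source.

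For (iv), the plan is to invoke the gluing theorem {\bf\ref{gluing}} with $P = Q$. Hypothesis (1) is automatic since $Q\cong V_4$. The factor $\mathfrak A_4/Q\cong C_3$ acts transitively on the three involutions of $Q$, so they all fuse to a single $G$-class, namely $\mathsf M_{12}$-class $2A$, whose centralizer is $C_G(C_2)\cong C_2\times\mathfrak S_5$. Invoking {\bf\ref{abelian2xS5}} with $R = C_2$ then verifies hypotheses (2)--(4) at every non-trivial proper subgroup $Q' = C_2$ of $Q$: $k(C_2\times\mathfrak S_5)$ has a unique non-principal $2$-block with defect group $Q$; the centralizer $C_H(C_2) = Q\times\mathfrak S_3$ has a unique block with defect group $Q$, namely $kQ\otimes b_0$ with $b_0$ the defect-zero block of $k\mathfrak S_3$ containing the $2$-dimensional ordinary character; and the Green correspondent bimodule realises a Puig equivalence between these two. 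At $Q' = Q$ itself one has $C_G(Q) = C_H(Q) = Q\times\mathfrak S_3$, so hypotheses (2)--(4) hold trivially. Thus {\bf\ref{gluing}} yields a stable equivalence of Morita type between $A$ and $B$, realised by $M = \mathfrak f(A)$.

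To upgrade this to a Morita equivalence, I apply {\bf\ref{StableIndecomposableMorita}}(ii): it suffices to verify that $S\otimes_A M$ is simple for each simple $A$-module $S$. Using the explicit matrix representations of the three simples of $A$ obtained from \cite{ModularAtlasRepresentation}, one computes $S\otimes_A M$ directly via a MeatAxe \cite{MA} decomposition (alternatively, via the restriction of $S$ to $H$ followed by projection onto $B$ and removal of projective summands, using {\bf\ref{Green-GxG}}). Since $B$ has only three simple modules, each of dimension $2$, verifying simplicity of the three resulting images completes the argument; this last step --- especially for the $144$-dimensional simple --- is the computationally heaviest and is the main obstacle, which must be handled through explicit matrix manipulations rather than a purely theoretical argument. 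Once simplicity is established, {\bf\ref{StableIndecomposableMorita}}(ii) yields the Morita equivalence; finally, $M\mid 1_A\cdot kG\cdot 1_B$ by {\bf\ref{Green-GxG}} makes $M$ a trivial source $k[G\times H]$-module with vertex $\Delta Q$, upgrading the Morita equivalence to the required Puig equivalence.
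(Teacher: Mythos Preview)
Your overall architecture matches the paper's: parts (i)--(iii) via Atlas/Modular Atlas data, trivial-sourceness of the simples via explicit permutation modules, then for (iv) the gluing theorem {\bf\ref{gluing}} applied to the $C_2$-subgroups (using {\bf\ref{abelian2xS5}} for the local Morita equivalences) to get a stable equivalence of Morita type, followed by Linckelmann's criterion {\bf\ref{StableIndecomposableMorita}}(ii).

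The one substantive divergence is in how you verify that simples go to simples. You propose explicit {\sf MeatAxe} computation of $S\otimes_A M$ and assert that, especially for the $144$-dimensional simple, this ``must be handled through explicit matrix manipulations rather than a purely theoretical argument''. The paper avoids this entirely with a short theoretical argument: by (ii) each simple $S$ in $A$ is a trivial source module, and by Kn\"orr's theorem it has vertex $Q$; hence its Green correspondent $f(S)$ in $H$ is again a trivial source $kH$-module in $B$ with vertex $Q$. But in $B\cong k\mathfrak A_4\otimes\beta$ (a block with normal defect group $Q$), the trivial source modules with vertex $Q$ are precisely the simple modules, so $f(S)$ is simple. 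Combining with {\bf\ref{SourceVertex}}(v) and {\bf\ref{StableIndecomposableMorita}}(i) gives $S\otimes_A M\cong f(S)$ simple, and {\bf\ref{StableIndecomposableMorita}}(ii) finishes. This is why part (ii) establishes trivial-sourceness in the first place: it is not incidental but is exactly the input needed to make the simples-to-simples step theoretical. Your computational route would work, but it is unnecessary. (Also, your alternative justification of trivial-sourceness via ``simples in a block Morita equivalent to $k\mathfrak A_4$'' is circular here, since the Morita equivalence is what you are proving; stick to the permutation-module argument, which the paper carries out using the maximal subgroups $\mathrm{PSL}_2(11)$ and $2_+^{1+4}.\mathfrak S_3$.)
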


\begin{proof}
(i)--(iii) except the last part of (ii) are easy by \cite[p.33]{Atlas},
and \cite[$\mathsf{M}_{12}$ (mod 2)]{ModularAtlas} or 
\cite[$\mathsf{M}_{12}$ (mod 2)]{ModularAtlasProject}. 
Actually, using the character table of
$G$, it turns out that the conjugacy class {\rm{3B}} of $G$ is
a defect class of $A$. 
Hence $Q$ is a Sylow $2$-subgroup of the
centraliser $C_G(3B)={\mathfrak A}_4 \times C_3$, while the normaliser
$N_G(3B)={\mathfrak A}_4 \times \mathfrak S_3$ is a maximal subgroup of
$G$, containing $Q$ as normal subgroup.

It remains to show the last statement in (ii).
By \cite[p.33]{Atlas},
$G$ has a maximal subgroup $L \cong \mathrm{PSL}_2(11)$. Then again
\cite[p.33]{Atlas} yields that 
$1_L{\uparrow}^G{\cdot}1_A = \chi_{16} + \chi_{16}^*$, 
where $\chi_{16}(1) = \chi_{16}^*(1) = 16$. Set $X_{kG}
= k_L{\uparrow}^G{\cdot}1_A$. 
Then $X = 16 + 16^*$ as composition
factors. Since $\chi_{16} \:\, {\not=} \:\, \chi_{16}^*$, we get
by \cite[Chap.4 Theorem 8.9(i)]{NagaoTsushima} that $[X, X]^G = 2$. 
Therefore $X = 16 \oplus 16^*$. 
Hence $16$ and $16^*$ are both trivial source
$kG$-modules. Finally, we know that
$k_W{\uparrow}^G{\cdot}1_A = 144$, where $W$ 
is a maximal subgroup of
$G$ with $W = 2_{+}^{1+4}.\mathfrak S_3$. 
This shows that $144$ is also
a trivial source $kG$-module.

(iv) All elements of $Q - \{ 1 \}$ are conjugate in $H$, hence the
character table of $G$ \cite[p.33]{Atlas} shows that they all belong to
the conjugacy class {\rm{2A}} of $G$. Take any element $t \in Q - \{ 1 \}$,
and set $R = \langle t \rangle$. Thus we have
\[
C_G(R) \cong R \times \mathfrak S_5 
\qquad {\text{and}} \qquad C_H(R) \cong Q
\times \mathfrak S_3 \cong R \times (C_2 \times \mathfrak S_3).
\]
The algebra $kC_G(R)$ has a unique block algebra $A_R$ 
with the defect
group $Q$ since $k\mathfrak S_5$ has a unique block algebra with defect
group $C_2$, and similarly $kC_H(R)$ has a unique block algebra
$B_R$
with the defect group $Q$ since $k\mathfrak S_3$ has a unique block
algebra of defect zero. Moreover, we know by {\bf\ref{abelian2xS5}} that
$\mathfrak f_R(A_R)$ induces a Morita equivalence between 
$A_R$ and $B_R$, where $\mathfrak f_R$ is the Green correspondence
with respect to 
$(C_G(R) \times C_G(R), \Delta Q, C_G(R) \times C_H(R))$.
Thus it follows from {\bf\ref{gluing}} that $M$ induces a stable
equivalence of Morita type between $A$ and $B$.

Now, let $f$ be the Green correspondence with respect to 
$(G, Q, H)$. 
Take any simple $kG$-module $S$ in $A$. It follows from (ii),
\cite[3.7.Corollary]{Knoerr}, and \cite[Lemma 2.2]{Okuyama1981} that
$f(S)$ is a simple $kH$-module. Hence from {\bf\ref{SourceVertex}}(v)
and {\bf\ref{StableIndecomposableMorita}}(i) we obtain that $S
\otimes_A M$ is a simple $kH$-module in $B$. 
We then finally know
that $M$ realises a Morita equivalence between $A$ and $B$ by
{\bf\ref{StableIndecomposableMorita}(ii)}.
\end{proof}

\begin{Lemma}\label{C2xM12}
Set $R = C_2$, 
and let $G = R \times \mathsf{M}_{12}$.
\begin{enumerate}
 \renewcommand{\labelenumi}{\rm{(\roman{enumi})}}
\item 
There exists a unique block algebra $A$ of $kG$ with 
  defect group $P = R \times C_2 \times C_2$.
 \item 
We can write ${\mathrm{IBr}}(A) = \{ 16, 16^*, 144 \}$,
  where the numbers $16$ and $144$ give the dimensions (degrees).
  Moreover, all the simple $kG$-modules $16$, $16^*$, $144$ in $A$ are
  trivial source modules.
 \item 
Let $H = N_G(P)$. Then 
  $H = R \times \mathfrak A_4 \times \mathfrak S_3 
  \cong (P \rtimes C_3)
  \times \mathfrak S_3$. 
Note that 
$P \rtimes C_3 \cong R \times (Q \rtimes C_3)$ and 
$Q \rtimes C_3 \cong \mathfrak A_4$,
where $Q = C_2 \times C_2$.
 \item 
Let $B$ be a block algebra of $kH$ such that 
  $B$ is the Brauer correspondent of 
$A$. Let $\mathfrak f$ be the 
  Green correspondence
  with respect to $(G \times G, \Delta P, G \times H)$. 
Then $\mathfrak f(A)$ induces a Puig equivalence between 
  $A$ and $B$.
\end{enumerate}
\end{Lemma}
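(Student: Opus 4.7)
The plan is to deduce Lemma~\ref{C2xM12} from the $\mathsf{M}_{12}$-case (Lemma~\ref{M12}) by tensoring with the trivial Morita self-equivalence $_{kR}kR_{kR}$ of $kR$ and invoking Lemma~\ref{TensorMorita} and Lemma~\ref{TensorPuigEquivalence}. For parts (i)--(iii), one exploits the decomposition $kG = kR \tenk k\mathsf{M}_{12}$: since $kR$ is local, the blocks of $kG$ are in bijection with those of $k\mathsf{M}_{12}$ via $A' \mapsto kR \tenk A'$, with defect group $R \times (\text{defect of }A')$, so Lemma~\ref{M12}(i) yields a unique block $A$ of $kG$ with defect group $P = R \times Q$. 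The simple $A$-modules are the tensor products $k_R \tenk S$ with $S$ simple in the non-principal $\mathsf{M}_{12}$-block, giving exactly $16,\, 16^*,\, 144$; these are trivial source because both tensor factors are, by Lemma~\ref{TensorPuigEquivalence}(ii). Finally, since $R$ is central in $G$ and $P = R \times Q$, we obtain $N_G(P) = R \times N_{\mathsf{M}_{12}}(Q) = R \times \mathfrak{A}_4 \times \mathfrak{S}_3$ by Lemma~\ref{M12}(iii).

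For (iv), let $M_0$ be the $(A', B')$-bimodule from Lemma~\ref{M12}(iv), where $A'$ is the non-principal $\mathsf{M}_{12}$-block and $B'$ its Brauer correspondent in $kN_{\mathsf{M}_{12}}(Q)$; that lemma provides $M_0$ as a trivial source bimodule with vertex $\Delta Q$ inducing a Puig equivalence. The regular bimodule $_{kR}kR_{kR}$ realises the identity self-Morita equivalence of $kR$, and as a $k[R \times R]$-module is isomorphic to $k_{\Delta R}{\uparrow}^{R \times R}$, indecomposable with vertex $\Delta R$ and trivial source. Applying Lemma~\ref{TensorMorita}(i) yields that $M_0 \tenk {_{kR}kR_{kR}}$ is an $(A, B)$-bimodule inducing a Morita equivalence between $A = A' \tenk kR$ and $B = B' \tenk kR$, while Lemma~\ref{TensorMorita}(ii) together with Lemma~\ref{TensorPuigEquivalence}(ii) shows that it is indecomposable, of trivial source, and has vertex $\Delta P$ once the natural reshuffling identifies $\Delta Q \times \Delta R$ with $\Delta(R \times Q) = \Delta P$. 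Hence the tensor-product bimodule induces a Puig equivalence between $A$ and $B$.

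It remains to identify $M_0 \tenk kR$ with $\mathfrak f(A)$. By Lemma~\ref{Green-GxG} applied to $G$ and $H$, the Green correspondent $\mathfrak f(A)$ is the unique (up to isomorphism) indecomposable direct summand of $1_A \cdot kG \cdot 1_B$ with vertex $\Delta P$. On the other hand, Lemma~\ref{Green-GxG} applied inside $\mathsf{M}_{12}$ gives $M_0 \mid 1_{A'} \cdot k\mathsf{M}_{12} \cdot 1_{B'}$, so $M_0 \tenk kR$ is a summand of $(1_{A'} \cdot k\mathsf{M}_{12} \cdot 1_{B'}) \tenk kR \cong 1_A \cdot kG \cdot 1_B$, indecomposable and of vertex $\Delta P$ by the previous paragraph. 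Uniqueness then forces $\mathfrak f(A) \cong M_0 \tenk kR$, and combining with the previous paragraph we conclude that $\mathfrak f(A)$ induces a Puig equivalence between $A$ and $B$. No substantive obstacle is anticipated; the only point requiring care is the bookkeeping of vertices under the reshuffling $(R \times \mathsf{M}_{12})^2 \cong (R \times R) \times (\mathsf{M}_{12} \times \mathsf{M}_{12})$, which is needed both to recognise $\Delta R \times \Delta Q$ as $\Delta P$ and to apply uniqueness in Green correspondence to identify the tensor-product bimodule with $\mathfrak f(A)$.
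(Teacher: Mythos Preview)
Your proposal is correct and takes essentially the same approach as the paper, whose proof consists of a single sentence: ``This follows from {\bf\ref{M12}}(iv) and {\bf\ref{TensorMorita}}.'' You have correctly unpacked this into the tensor-with-$kR$ argument, including the identification of $M_0\otimes kR$ with $\mathfrak f(A)$ via uniqueness in Green correspondence. One small point: when you assert that $M_0\otimes kR$ has vertex exactly $\Delta P$ (not merely a vertex contained in $\Delta P$), Lemmas~\ref{TensorMorita}(ii) and~\ref{TensorPuigEquivalence}(ii) as stated do not quite give this directly; you need the general fact (K\"ulshammer \cite[Proposition 1.2]{Kuelshammer1993}, invoked in the proof of Lemma~\ref{TensorPuigEquivalence}) that for indecomposable modules over a direct product the vertex is the product of the vertices, so that $\mathrm{vtx}(M_0\otimes kR)=\Delta Q\times\Delta R$, which reshuffles to $\Delta P$.
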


\begin{proof}
This follows from {\bf\ref{M12}}(iv) and {\bf\ref{TensorMorita}}.
\end{proof}

\section{Obtaining stable equivalences}\label{co3}

In this section, by using the lemmas in \S\S 2--3
we shall obtain
a stable equivalence of Morita type 
between the principal $2$-block of
the smallest Ree group $R(3)$ 
and the non-principal $2$-block of $\mathsf{Co}_3$ 
with defect group $C_2 \times C_2 \times C_2$ 
under consideration.
The following hypothesis determines our standard setting which we fix here 
for future reference.

\begin{Hypothesis}\label{hyp:Co3}
Let $G$ be the sporadic group $\mathsf{Co}_3$, and let $A$ be the block 
algebra of $kG$ with defect group 
$P = C_2 \times C_2 \times C_2$, 
see \cite[$\mathsf{Co}_3$]{ModularAtlasProject}, 
\cite[p.1879]{Landrock1978} and 
\cite[p.494 \S 2]{SuleimanWilson}.
Set $N = N_G(P)$, and let
$A_N$ be the Brauer correspondent of $A$ in $kN$. Furthermore, let $(P,e)$
be a maximal $A$-Brauer pair in $G$.

Let $Q$ be a subgroup of $P$ isomorphic to $C_2 \times C_2$, 
and $R$ one which is cyclic of order 2. Let $e_Q$ and $f_Q$ be block
idempotents of the block algebras of $kC_G(Q)$ and
$kC_H(Q)$, respectively, such that 
$(Q, e_Q) \subseteq (P, e)$ and $(Q, f_Q) \subseteq (P, e)$, 
see \cite[\S 10 p.346]{Thevenaz}. Similarly
define $e_R$ and $f_R$ by replacing $Q$ with $R$.
We denote by $F_{21}$ the Frobenius group of order $21$,
namely, $F_{21} \cong C_7 \rtimes C_3$, which is 
a maximal subgroup of $\GL_3(2)$. Also, let
$R(3) \cong \SL_2(8) \rtimes C_3$ be the smallest
Ree group, see \cite[p.6]{Atlas}.
\end{Hypothesis}

We first 
collect information on the subgroups of $\mathsf{Co}_3$ to consider.

\begin{Lemma}\label{2-local-Co3}
 Assume 
{\bf\ref{hyp:Co3}}. Then the following hold:

\begin{enumerate}
  \renewcommand{\labelenumi}{\rm{(\roman{enumi})}}
    \item
$N \cong (P \rtimes F_{21}) \times \mathfrak S_3
   \cong \Big((P \rtimes C_7) \rtimes C_3 \Big) \times \mathfrak S_3$.
    \item 
There is a maximal subgroup $H$ of $G$ such that
$N \leqslant H \cong (\SL_2(8) \rtimes C_3) \times \mathfrak S_3$, 
and
   $P \rtimes C_7$ is isomorphic to a Borel subgroup of $\SL_2(8)$.
    \item
$C_G(P) = C_H(P) = C_N(P) \cong P \times \mathfrak S_3$. 
    \item
There exists a unique block algebra $\beta$ of $k\mathfrak S_3$
such that $\beta$ has defect zero,
$\beta \cong {\mathrm{Mat}}_2(k)$ as $k$-algebras,
and $e kC_G(P) \cong kP \otimes \beta$.
    \item
$N_G(P,e)=N$.
    \item The inertial quotient $N_G(P,e)/C_G(P)$ is isomorphic to
$F_{21}$.
    \item
All elements of $P - \{1\}$ are conjugate in $N$.
That is, any subgroup of $P$ of order $2$ is 
conjugate to $R$ in $N$.
    \item
$C_G(R) \cong R \times \mathsf{M}_{12}$ 
and 
$C_H(R) = C_N(R) \cong R \times \mathfrak A_4 \times \mathfrak S_3 
        \cong (P \rtimes C_3) \times \mathfrak S_3$.
    \item
All subgroups of $P$ of order $4$ are conjugate in $N$.
That is, any subgroup of $P$ of order $4$ is
conjugate to $Q$ in $N$.
    \item
$C_G(Q) \cong Q \times \mathfrak S_5$ 
and
$C_H(Q) = C_N(Q) = C_H(P) \cong P \times \mathfrak S_3$.
     \item
Let $B = B_0(kR(3)) \otimes \beta$,
see {\rm{(iv)}} for $\beta$. 
Then $B$ is a block algebra of $kH$
with the defect group $P$, the block 
$B$ is the Brauer correspondent of $A$
and of $A_N$ in $H$, 
and we furthermore know that
$B$ and $B_0(kR(3))$ are Puig equivalent.
\end{enumerate}
\end{Lemma}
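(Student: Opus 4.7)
The plan is to pin down a concrete maximal subgroup $H$ of $G$ that contains $N_G(P)$, and then read off all the local structure from $H$ together with ATLAS information on involution centralisers in $\mathsf{Co}_3$. Computer support through {\sf GAP} \cite{GAP} and the character table library \cite{CTblLib} can be used to confirm Sylow subgroups, centralisers, normalisers and fusion at each step; most of the work is bookkeeping once the right $H$ has been identified.

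For (i) and (ii), the list of maximal subgroups in \cite[p.134]{Atlas} contains one isomorphic to $R(3) \times \mathfrak S_3 \cong (\SL_2(8) \rtimes C_3) \times \mathfrak S_3$, which is the candidate for $H$. A Sylow $2$-subgroup of $\SL_2(8)$ is elementary abelian of order $8$, and taking it as the first tensor factor provides the role of $P$. Its normaliser in $\SL_2(8)$ is a Borel subgroup $P \rtimes C_7$, and the field automorphism of order $3$ normalises this Borel, so $N_H(P) = (P \rtimes F_{21}) \times \mathfrak S_3$. To finish (ii) and obtain (v) one must verify that $N_G(P) \leqslant H$; this is a local computation in $\mathsf{Co}_3$ that can be done with {\sf GAP} by comparing $|N_G(P)|$ to $|(P \rtimes F_{21}) \times \mathfrak S_3|$. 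Part (iii) then follows because $C_{\SL_2(8)}(P) = P$ and the field automorphism $C_3$ acts fixed-point-freely on $P \setminus \{1\}$, so $C_G(P) = P \times \mathfrak S_3$; combining this with (i) gives (vi). For (iv), $k\mathfrak S_3$ has exactly two blocks, and since a maximal $A$-Brauer pair has defect group $P$, the idempotent $e$ must project to the defect-zero block $\beta$ of $k\mathfrak S_3$; this block is isomorphic to $\mathrm{Mat}_2(k)$ because $\mathfrak S_3$ has an irreducible complex character of degree $2 = |\mathfrak S_3|_2$, and thus $e \cdot kC_G(P) \cong kP \otimes \beta$.

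Parts (vii) and (ix) are purely group-theoretic: $F_{21}$ is the unique (up to conjugacy) maximal $2'$-subgroup of $\mathrm{Aut}(P) \cong \GL_3(2)$, and it already acts transitively both on the seven non-trivial elements of $P$ and on the seven index-two subgroups of $P$. For (viii) the $G$-conjugacy class of $R$ is determined by (vii), and the ATLAS records the centraliser of an involution of the relevant class of $\mathsf{Co}_3$ as $2 \times \mathsf{M}_{12}$; the shape of $C_H(R)$ then follows from the direct-product structure of $H$ together with (iii). For (x) one uses (ix) to identify $Q$ up to $N$-conjugacy, then observes $C_G(Q) = C_{C_G(R)}(Q/R) = R \times C_{\mathsf{M}_{12}}(s)$ for an involution $s \in \mathsf{M}_{12}$; since the centraliser of an involution in $\mathsf{M}_{12}$ is $C_2 \times \mathfrak S_5$, this yields $C_G(Q) \cong Q \times \mathfrak S_5$. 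The description of $C_H(Q)$ follows by the analogous restriction inside $H$.

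For (xi), the tensor decomposition $H = R(3) \times \mathfrak S_3$ gives the algebra decomposition $kH \cong kR(3) \otimes k\mathfrak S_3$, so $B_0(kR(3)) \otimes \beta$ is indeed a block of $kH$, and its defect group is the product $P \times 1 = P$. That $B$ is the Brauer correspondent of both $A$ and $A_N$ follows from Brauer's first main theorem once one knows (v) and that the Brauer pair $(P,e)$ sits inside the Brauer pair associated with $B$. The Puig equivalence between $B$ and $B_0(kR(3))$ is then immediate from {\bf\ref{TensorPuigEquivalence}}(i) applied to the defect-zero factor $\beta$. The main obstacle throughout will be (v), namely checking that $N_G(P,e)$ really equals $N_G(P)$ rather than a proper subgroup; this amounts to showing that the full $F_{21}$, and not a proper $2'$-subgroup thereof, stabilises $e$, and it is the only point where one truly leaves the realm of routine direct-product bookkeeping.
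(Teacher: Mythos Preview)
Your approach is essentially the same as the paper's: both rely on ATLAS data for $\mathsf{Co}_3$ and $\mathsf{M}_{12}$, a {\sf GAP} computation to pin down $N_G(P)$, and routine block theory for the tensor factor $\mathfrak S_3$. The paper differs only in its entry point: rather than starting from the maximal subgroup list, it observes that the conjugacy class $3C$ is a defect class of $A$, so $P$ is a Sylow $2$-subgroup of $C_G(3C)\cong(\SL_2(8)\rtimes C_3)\times C_3$ and $H=N_G(3C)$ is automatically the right maximal subgroup. This has the mild advantage that $P$ is located inside $H$ from the outset, rather than being chosen in $H$ and then checked to have the right $G$-normaliser.

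Two small points are worth tightening. First, in (viii) and (x) you refer to ``the relevant class'' of involutions without saying which; $\mathsf{Co}_3$ has two involution classes and so does $\mathsf{M}_{12}$, and the paper uses the subgroup fusion data in \cite{CTblLib} to identify the elements of $P\setminus\{1\}$ as lying in class $2B$ of $\mathsf{Co}_3$, and the relevant $\mathsf{M}_{12}$-involution as class $2A$. You should make this explicit. Second, your closing remark overstates the difficulty of (v). Once (iv) is established, $e$ is the idempotent of the \emph{unique} block of $kC_G(P)=kP\otimes k\mathfrak S_3$ with defect group exactly $P$ (the principal block has defect $P\times C_2$). Since $N_G(P)$ permutes the blocks of $kC_G(P)$ and preserves defect groups, it must fix $e$; so $N_G(P,e)=N_G(P)$ is automatic and not an obstacle at all.
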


\begin{proof}
This is verified easily using {\sf{GAP}} \cite{GAP}, 
with the help of the smallest faithful permutation
representation of $G$ on $276$ points, 
available in \cite{AtlasRepresentation} in terms of so-called
standard generators \cite{Wilson}.
Since in \cite{AtlasRepresentation} also representatives
of the conjugacy classes of elements, as well as of the maximal subgroups
of $G$ are provided, all above-mentioned subgroups of $G$ 
can be constructed explicitly.

To begin with, using the character table of $G$ \cite[p.135]{Atlas},
it turns out that the conjugacy class {\rm{3C}} of $G$ 
is a defect class of $A$.
Hence $P$ is a Sylow $2$-subgroup of the centraliser $C_G(3C)$,
where by \cite[p.135]{Atlas} again we have 
$C_G(3C)\cong(\SL_2(8) \rtimes C_3) \times C_3$, while the normaliser
$H = N_G(3C)\cong (\SL_2(8) \rtimes C_3) \times \mathfrak S_3$
is a maximal subgroup of $G$. 

Using the data on subgroup fusions available
in \cite{CTblLib}, it follows that the elements of 
$P - \{1\}$ belong to the $2B$ conjugacy class of $G$,
hence \cite[p.134]{Atlas} shows that $C_G(R)\cong R \times \mathsf{M}_{12}$, which
is another maximal subgroup of $G$.
Moreover, it follows that 
$C_G(Q)\cong C_2 \times C_{\mathsf{M}_{12}}(2A)
       \cong C_2 \times (C_2 \times \mathfrak S_5)$,
where by \cite[p.33]{Atlas}
$C_2 \times \mathfrak S_5$ is a maximal subgroup of $\mathsf{M}_{12}$.
Finally, the structure of $C_H(P)$, $C_H(R)$, and $C_H(Q)$
follows from a consideration
of the action of $F_{21}\leqslant \GL_3(2)$ on the defect group $P$.

(xi) This follows by {\bf\ref{TensorPuigEquivalence}}.
\end{proof}

\begin{Notation}\label{H}
We use the notation $H$, $\beta$ and $B$ 
as in {\bf\ref{2-local-Co3}}(ii), (iv) and (xi),
respectively.
We denote the unique simple $k\mathfrak S_3$-module in $\beta$
by $2_{\mathfrak S_3}$.
\end{Notation}

It is now time to harvest what we have sown in our analysis of the 2-local
structure of $G$. In \textbf{\ref{stableEquivalence-A-AN}},
we use our previous results to obtain a stable
equivalence of Morita type between the blocks $A$ and $A_N$ via
\textbf{\ref{gluing}}. Similarly in \textbf{\ref{R(3)xS3}},
we derive a stable equivalence between the
blocks $B$ and $A_N$, which together with the first yields the stable
equivalence sought between $A$ and $B$ in 
\textbf{\ref{stableEquivalence-A-B}}.

\begin{Lemma}\label{R(3)xS3}
Let $\mathfrak f_1$ be the Green correspondence with respect to 
$(H \times H, \Delta P, H \times N)$,
and set $\mathfrak N = \mathfrak f_1(B)$. 
Then
$\mathfrak N$ induces a stable equivalence of Morita type 
between $B$ and $A_N$.
\end{Lemma}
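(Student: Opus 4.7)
The plan is to invoke the gluing theorem \textbf{\ref{gluing}}, taking as ambient group $H$ (in the role of $G$), the block $B$ of $kH$ in the role of the block $A$, and the subgroup $N = N_G(P) = N_H(P,e)$ in the role of the Brauer-corresponding normaliser. Under this matching, the Green correspondence with respect to $(H \times H, \Delta P, H \times N)$ is precisely $\mathfrak{f}_1$, so $\mathfrak{f}_1(B) = \mathfrak{N}$ is exactly the bimodule furnished by the gluing theorem, and its conclusion then yields the desired stable equivalence of Morita type between $B$ and $A_N$.

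What remains is to verify the four hypotheses of \textbf{\ref{gluing}}. Hypothesis (1) is immediate, since $P$ is abelian. For hypotheses (2) and (3) I would combine parts (iii), (vii)--(x) of \textbf{\ref{2-local-Co3}}, which reduce matters to $Q' \in \{R, Q, P\}$ up to $N$-conjugacy and, crucially, supply the equalities $C_H(Q') = C_N(Q')$ in each case; consequently the candidate blocks $A_{Q'}$ of $kC_H(Q')$ and $B_{Q'}$ of $kC_N(Q')$ coincide, and both uniqueness requirements collapse to a single one. In every case the centraliser has the form $X \times \mathfrak{S}_3$, where $X$ is either $P$ or $P \rtimes C_3 \cong C_2 \times \mathfrak{A}_4$, so its $2$-blocks are tensor products of $2$-blocks of the two factors. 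Since $k\mathfrak{A}_4$ has a principal block of defect $C_2 \times C_2$ together with one block of defect zero, and $k\mathfrak{S}_3$ has a principal block of defect $C_2$ together with the defect-zero block $\beta$, a brief enumeration shows that in each case there is exactly one tensor-product block of defect group $P$, namely $B_0(kX) \otimes \beta$.

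Hypothesis (4) then drops out for free: because $C_H(Q') = C_N(Q')$ for every nontrivial proper $Q' < P$, the Green correspondence $\mathfrak{f}_{Q'}$ with respect to $(C_H(Q') \times C_H(Q'), \Delta P, C_H(Q') \times C_H(Q'))$ is the identity functor on the common block $A_{Q'} = B_{Q'}$. Hence $\mathfrak{f}_{Q'}(A_{Q'})$ is simply the regular $(A_{Q'}, A_{Q'})$-bimodule, which trivially induces the identity Morita self-equivalence of $A_{Q'}$. With all four hypotheses in hand, \textbf{\ref{gluing}} immediately yields that $\mathfrak{N} = \mathfrak{f}_1(B)$ induces a stable equivalence of Morita type between $B$ and $A_N$. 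The substantive work has already been discharged by the $2$-local analysis \textbf{\ref{2-local-Co3}}; there is no genuine obstacle beyond the careful matching of notation between our setting and that of the gluing theorem, since once the centralisers are identified the gluing theorem does the heavy lifting essentially for free.
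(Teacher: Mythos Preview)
Your approach is essentially identical to the paper's: both apply the gluing lemma \textbf{\ref{gluing}} with $H$ in the role of the ambient group, exploit the equalities $C_H(Q') = C_N(Q')$ from \textbf{\ref{2-local-Co3}} so that the local blocks $A_{Q'}$ and $B_{Q'}$ coincide, and observe that the local Green correspondences $\mathfrak f_{Q'}$ are then identities, making hypothesis (4) trivial.

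One small slip worth correcting: in characteristic $2$ the algebra $k\mathfrak A_4$ has \emph{only} the principal block, not an additional block of defect zero, because $O_2(\mathfrak A_4)=V_4$ is a normal $2$-subgroup and hence lies in every defect group. This does not affect your conclusion, since the unique block of $k[P\rtimes C_3]$ still has defect group $P$ and the enumeration of tensor-product blocks with defect exactly $P$ comes out the same; but the parenthetical claim about $k\mathfrak A_4$ is inaccurate as stated.
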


\begin{proof}
By {\bf\ref{Green-GxG}}, $\mathfrak N | {1_B{\cdot}kH{\cdot}1_{A_N}}$.
We know by {\bf\ref{2-local-Co3}}(viii) and
{\bf\ref{2-local-Co3}}(x) that
\[
C_H(Q) = C_N(Q) = P \times \mathfrak S_3
\quad {\text{and}}  \quad
C_H(R)= C_N(R) = (P \rtimes C_3 ) \times \mathfrak S_3.
\]
Let $\mathbb A_Q$, $\mathbb A_R$, $\mathbb B_Q$ and $\mathbb B_R$
be the block algebras of
$kC_H(Q)$, $kC_H(R)$, $kC_N(Q)$ and $kC_N(R)$, respectively, 
such that they have $P$ as a defect group. Then
\[
 \mathbb A_Q = \mathbb B_Q 
             = kP \otimes k{\mathfrak S_3}{\cdot}\beta
             \cong {\mathrm{Mat}}_2(kP)
\quad {\text{and}}  \quad
 \mathbb A_R = \mathbb B_R
             = k[P\rtimes C_3] \otimes k{\mathfrak S_3}{\cdot}\beta,
\]
where the isomorphism is of $k$-algebras. Thus we obviously know that
\[
   \mathfrak f_Q(\mathbb A_Q) = \mathbb A_Q 
\quad {\text{and}}  \quad
   \mathfrak f_R(\mathbb A_R) = \mathbb A_R,
\]
where $\mathfrak f_Q$ and $\mathfrak f_R$ are
the Green correspondences with respect to 
\[
(C_H(Q) \times C_H(Q), \ \Delta P, \ C_H(Q) \times C_N(Q))
\qquad 
{\text{and}}  
\quad
(C_H(R) \times C_H(R), \ \Delta P, \ C_H(R) \times C_N(R)),
\]
respectively.
Thus $\mathfrak f_Q(\mathbb A_Q)$ induces a Morita equivalence
between $\mathbb A_Q$ and $\mathbb B_Q$, and
$\mathfrak f_R(\mathbb A_R)$ induces a Morita equivalence
between $\mathbb A_R$ and $\mathbb B_R$.
Therefore we get the assertion by {\bf\ref{gluing}}.
\end{proof}

\begin{Lemma}\label{stableEquivalence-A-AN}
Let $\mathfrak f_2$ be the Green correspondence
with respect to $(G \times G, \Delta P, G \times N)$,
and set $\mathfrak M = \mathfrak f_2(A)$.
Then we get
\begin{enumerate}
  \renewcommand{\labelenumi}{\rm{(\roman{enumi})}}
    \item
$\mathfrak M \mid 1_A{\cdot}kG{\cdot}1_{A_N}$. 
    \item
The bimodule $e_R \mathfrak M(\Delta R) f_R$ 
induces a Morita equivalence
between the block algebras $kC_G(R)e_R$ and $kC_N(R)f_R$.
    \item
The bimodule $e_Q \mathfrak M(\Delta Q) f_Q$ 
induces a Morita equivalence
between the block algebras $kC_G(Q)e_Q$ and $kC_N(Q)f_Q$.
    \item
$\mathfrak M$ induces a stable equivalence of Morita type
between $A$ and $A_N$.
\end{enumerate}
\end{Lemma}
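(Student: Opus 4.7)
The plan is to derive the four parts essentially as a harvest of the results already in place, with the main work being the bookkeeping needed to feed into the gluing theorem \textbf{\ref{gluing}}.

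For part (i), I would simply invoke \textbf{\ref{Green-GxG}}: by \textbf{\ref{2-local-Co3}}(v) we have $N_G(P,e)=N=N_G(P)$, so the hypothesis of \textbf{\ref{Green-GxG}} is satisfied with the pair $(A,A_N)$ in place of $(A,B)$, yielding at once $\mathfrak{M}=\mathfrak{f}_2(A)\mid 1_A\cdot kG\cdot 1_{A_N}$.

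For parts (ii) and (iii), the key is the Koshitani–Linckelmann identification in \textbf{\ref{KoshitaniLinckelmann}}: it gives $e_R\mathfrak{M}(\Delta R)f_R=\mathfrak{f}_R(e_R kC_G(R))$ and $e_Q\mathfrak{M}(\Delta Q)f_Q=\mathfrak{f}_Q(e_Q kC_G(Q))$, where $\mathfrak{f}_R$ and $\mathfrak{f}_Q$ are the Green correspondences in the centralisers. For (ii), \textbf{\ref{2-local-Co3}}(viii) provides $C_G(R)\cong R\times\mathsf{M}_{12}$ and $C_N(R)\cong R\times\mathfrak{A}_4\times\mathfrak{S}_3$; since $kC_G(R)$ has a unique block algebra with defect group $P$ (namely $e_R kC_G(R)$) and the same holds for $C_N(R)$, we are exactly in the situation of \textbf{\ref{C2xM12}}, whose part (iv) yields the Morita (in fact Puig) equivalence. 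For (iii), \textbf{\ref{2-local-Co3}}(x) gives $C_G(Q)\cong Q\times\mathfrak{S}_5$ and $C_N(Q)\cong P\times\mathfrak{S}_3$, and the analogous conclusion follows from \textbf{\ref{abelian2xS5}} applied with the 2-group there taken to be $Q$.

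For part (iv) I would apply the gluing theorem \textbf{\ref{gluing}} directly. Hypothesis (1) is immediate since $P$ is elementary abelian. For (2) and (3), by \textbf{\ref{2-local-Co3}}(vii) and (ix) every non-trivial subgroup of $P$ is $N$-conjugate to $R$ or $Q$ or equals $P$ itself, so it is enough to check uniqueness of the block with defect group $P$ in $kC_G(\cdot)$ and $kC_N(\cdot)$ at these three subgroups; for $R$ and $Q$ this is given in \textbf{\ref{C2xM12}}(i) and \textbf{\ref{M12}}(i) (after tensoring with $kQ$ via \textbf{\ref{TensorMorita}}), while for $P$ itself this is \textbf{\ref{2-local-Co3}}(iii)–(iv). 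Hypothesis (4) is precisely the content of (ii) and (iii), invoked at an $N$-conjugate representative of each non-trivial proper subgroup of $P$; the remaining case $Q=P$ is trivial since $A_P=B_P=e\cdot kC_G(P)\cdot e$. The hypothesis $N=N_G(P,e)$ in \textbf{\ref{gluing}} is again \textbf{\ref{2-local-Co3}}(v).

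I expect the main obstacle to be not a deep mathematical difficulty but the careful conjugacy-and-block bookkeeping in part (iv): one has to verify that the conditions of \textbf{\ref{gluing}} really only need to be checked on the two $N$-conjugacy classes of non-trivial proper subgroups of $P$, and that the unique defect-$P$ blocks appearing in the centralisers match (via the Brauer-pair inclusions $(R,e_R),(Q,e_Q)\subseteq(P,e)$) with the blocks for which Morita equivalences were established in Section~\ref{Non-principal2-blocks}. Once this identification is in place, the four conclusions follow in order (i)$\Rightarrow$(ii),(iii)$\Rightarrow$(iv) with no further computation.
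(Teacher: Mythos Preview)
Your proposal is correct and follows essentially the same route as the paper's own proof: part (i) via \textbf{\ref{Green-GxG}} and \textbf{\ref{2-local-Co3}}(v); parts (ii)--(iii) via \textbf{\ref{KoshitaniLinckelmann}} followed by \textbf{\ref{C2xM12}} and \textbf{\ref{abelian2xS5}} respectively; and part (iv) by feeding (ii)--(iii) into \textbf{\ref{gluing}}. One small slip: when checking uniqueness of the defect-$P$ block in $kC_G(Q)$ for hypothesis (2) of \textbf{\ref{gluing}}, you cite \textbf{\ref{M12}}(i), but since $C_G(Q)\cong Q\times\mathfrak{S}_5$ the correct reference is \textbf{\ref{S5}}(i) (or \textbf{\ref{abelian2xS5}}), not the $\mathsf{M}_{12}$ lemma.
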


\begin{proof}
(i)
This follows from {\bf\ref{2-local-Co3}}(v) and {\bf\ref{Green-GxG}}.

(ii)
Let $\mathfrak f_R$ be the Green correspondence
with respect to 
$(C_G(R) \times C_G(R), \Delta P, C_G(R) \times C_N(R))$.
We get from (i) and {\bf\ref{KoshitaniLinckelmann}} that
$\mathfrak f_R \Big(e_R kC_G(R) \Big)
 = e_R \mathfrak M(\Delta R) f_R$.
Hence we obtain the assertion by {\bf\ref{C2xM12}}.

(iii) 
Analogous to the proof of (ii) if we use {\bf\ref{abelian2xS5}}
instead of {\bf\ref{C2xM12}}.

(iv)
This follows by
{\bf\ref{C2xM12}} and {\bf\ref{abelian2xS5}},
(i)--(iii) and {\bf\ref{gluing}}.
\end{proof}

\begin{Lemma}\label{stableEquivalence-A-B}
There is an $(A, B)$-bimodule $M$ which satisfies
the following:
\begin{enumerate}
 \renewcommand{\labelenumi}{\rm{(\arabic{enumi})}}
   \item
${_A}M_B$ is indecomposable,
   \item
$({_A}M_B, {_B}{M^\vee}{_A})$ induces a stable equivalence 
of Morita type
between $A$ and $B$,
   \item
${_A}M_B \mid k_{\Delta P}\uparrow^{G \times H}$ and
${_B}{M^\vee}_A \mid k_{\Delta P}\uparrow^{H \times G}$,
   \item
the stable equivalence of Morita type induced by
${_A}M_B$ preserves vertices and sources,
   \item
for any indecomposable 
$X \in {\mathrm{mod}}{\text{-}}A$ with vertex 
in $\mathfrak A(G,P,N)$, it holds 
$(X\otimes_A M)_B = f(X) \oplus ({\mathrm{proj}})$,
where $f$ is the Green correspondence with respect to
$(G, P, H)$ 
{\rm{(}}recall that
$\mathfrak A(G,P,N) \subseteq 
\mathfrak A(G,P,H) \cap \mathfrak A(H,P,N)$
by {\bf\ref{GreenCorrespondence}}{\rm{\bf(i)}}{\rm{)}}.
\end{enumerate}
\end{Lemma}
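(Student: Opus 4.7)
The plan is to construct $M$ by composing the two stable equivalences of Morita type already in hand: the one between $A$ and $A_N$ coming from $\mathfrak M = \mathfrak f_2(A)$ (Lemma {\bf\ref{stableEquivalence-A-AN}}), and the one between $B$ and $A_N$ coming from $\mathfrak N = \mathfrak f_1(B)$ (Lemma {\bf\ref{R(3)xS3}}). Dualising the latter to a stable equivalence of Morita type from $A_N$ to $B$ realised by the pair $(\mathfrak N^{\vee}, \mathfrak N)$, the two equivalences become composable at $A_N$. Both $\mathfrak M$ and $\mathfrak N^{\vee}$ are Green correspondents, hence trivial source bimodules with vertex $\Delta P$; in particular ${_A}\mathfrak M_{A_N} \mid k_{\Delta P}\uparrow^{G \times N}$ and ${_{A_N}}\mathfrak N^{\vee}_B \mid k_{\Delta P}\uparrow^{N \times H}$, and they preserve vertices and sources (see the appendix {\bf\ref{SourceVertex}}).

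With these data, I would invoke {\bf\ref{twoStableEquivalences}} with the three groups $G$, $N$, $H$ (where $N \leqslant G$ as required) and define $M$ to be the unique non-projective indecomposable $(A, B)$-bimodule summand of $\mathfrak M \otimes_{A_N} \mathfrak N^{\vee}$. Properties (1)--(4) are then immediate: indecomposability from {\bf\ref{twoStableEquivalences}}(2), the stable equivalence of Morita type $(M, M^{\vee})$ between $A$ and $B$ from {\bf\ref{twoStableEquivalences}}(6), the divisibility $_AM_B \mid k_{\Delta P}\uparrow^{G\times H}$ and its dual from {\bf\ref{twoStableEquivalences}}(5), and preservation of vertices and sources from {\bf\ref{twoStableEquivalences}}(4).

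For (5), I would chase the effect of $-\otimes_A M$ on an indecomposable $X \in \mathrm{mod}\text{-}A$ with vertex in $\mathfrak A(G, P, N)$. Using the fact recorded in the appendix {\bf\ref{SourceVertex}} that the stable equivalences produced via {\bf\ref{gluing}} realise Green correspondence at the module level up to projective summands, tensoring with $\mathfrak M$ gives $X \otimes_A \mathfrak M = f_2(X) \oplus (\mathrm{proj})$, where $f_2$ is the Green correspondence with respect to $(G, P, N)$. Since $f_2(X)$ shares vertex and source with $X$ and hence lies in $\mathfrak A(H, P, N)$, a further tensor with $\mathfrak N^{\vee}$ yields $f_1^{-1}(f_2(X)) \oplus (\mathrm{proj})$, where $f_1$ is the Green correspondence with respect to $(H, P, N)$. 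Transitivity of Green correspondence, {\bf\ref{GreenCorrespondence}}(ii), gives $f_1 \circ f = f_2$, so $f_1^{-1}(f_2(X)) = f(X)$. Finally, since $\mathfrak M \otimes_{A_N} \mathfrak N^{\vee} = M \oplus (\mathrm{projective}\ (A, B)\text{-bimodule})$ by {\bf\ref{twoStableEquivalences}}(1), and projective bimodule summands contribute projective $B$-module summands upon tensoring with $X$, we obtain $X \otimes_A M = f(X) \oplus (\mathrm{proj})$. The main delicate point is precisely (5): it rests on the identification of the module-level action of these Green-correspondent bimodules with the classical Green correspondence, together with the compatibility in the tower $N \leqslant H \leqslant G$; everything else is a direct application of {\bf\ref{twoStableEquivalences}} once the two constituent stable equivalences have been produced.
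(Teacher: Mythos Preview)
Your proposal is correct and follows essentially the same route as the paper: define $M$ as the non-projective indecomposable summand of $\mathfrak M \otimes_{A_N} \mathfrak N^{\vee}$, apply {\bf\ref{twoStableEquivalences}} (with $G$, $N$, $H$ in the roles of $G$, $H$, $L$) to obtain (1)--(4), and for (5) compute $X\otimes_A(\mathfrak M\otimes_{A_N}\mathfrak N^{\vee})$ as $f_1^{-1}(f_2(X))\oplus(\mathrm{proj})=f(X)\oplus(\mathrm{proj})$ via {\bf\ref{SourceVertex}} and the transitivity $f_2=f_1\circ f$ of {\bf\ref{GreenCorrespondence}}(ii). Your explicit remark that $f_2(X)$ has vertex in $\mathfrak A(G,P,N)\subseteq\mathfrak A(H,P,N)$, so that $f_1^{-1}$ applies, makes a point the paper leaves implicit.
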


\begin{proof}
Let $\mathfrak f_2$ be the Green correspondence with respect to
$(G \times G, \Delta P, G \times N)$, and set
$\mathfrak M = \mathfrak f_2(A)$. Let
$f_2$ be the Green correspondence with respect to $(G, P, N)$. 
Moreover, let $\mathfrak f_1$ be the Green correspondence with respect to
$(H \times H, \Delta P, H \times N)$, and set
$\mathfrak N = \mathfrak f_1(B)$.
Let $f_1$ be the Green correspondence with respect to $(H, P, N)$. 
Then by {\bf\ref{R(3)xS3}} and {\bf\ref{stableEquivalence-A-AN}} the
bimodules $\mathfrak{N}$ and $\mathfrak{M}$ induce stable equivalences, so
by {\bf\ref{SourceVertex}}(ii), and {\bf\ref{twoStableEquivalences}} 
there is a bimodule ${_A}M_B$ such that
\begin{equation*}\label{eq:stableEquivalence-A-B}\tag{$*$}
  {_A}(\mathfrak M \otimes_{A_N}\mathfrak N^{\vee})_B
=
  {_A}M_B \oplus ({\mathrm{proj}} \ (A,B){\text{-}}{\mathrm{bimodule}})
 \end{equation*}
and (1)--(4) hold.

It remains to show (5). 
Take any indecomposable $X \in {\mathrm{mod}}{\text{-}}A$ 
with a vertex which is in $\mathfrak A(G,P,N)$. 
Then it follows from \eqref{eq:stableEquivalence-A-B} that
\[
X \otimes_A(\mathfrak M \otimes_{A_N}\mathfrak N^{\vee})
\ = \
X \otimes_A(M \oplus ({\mathrm{proj}} \ 
(A,B){\text{-}}{\mathrm{bimodule}})).
\]
On the other hand, by {\bf\ref{GreenCorrespondence}}{(ii)}
we get
\begin{align*}
(X \otimes_A \mathfrak M)\otimes_{A_N}\mathfrak N^{\vee}
&=
[f_2 (X) \oplus ({\mathrm{proj}} \ A_N{\text{-}}{\mathrm{module}})]
       \otimes_{A_N} \mathfrak N^{\vee}        
\\
&=
( f_2 (X) \otimes_{A_N}\mathfrak N^{\vee})_B
\oplus 
(({\mathrm{proj}} \ A_N{\text{-}}{\mathrm{module}})
    \otimes_{A_N} \mathfrak N^{\vee})_B
\\
&= 
( f_2 (X) \otimes_{A_N}\mathfrak N^{\vee})_B
\oplus ({\mathrm{proj}} \ B{\text{-}}{\mathrm{module}})
\\  
&=
({f_1}^{-1}(f_2 (X)))_B \oplus ({\mathrm{proj}} \ B{\text{-}}{\mathrm{module}})
\\
&= f(X) \oplus ({\mathrm{proj}} \ B{\text{-}}{\mathrm{module}})
\end{align*}
\end{proof}

\section{Modules in $A$, $B$ and $A_N$}\label{blocks}

In the previous section, we have shown that there is a stable
equivalence of Morita type between the blocks $A$ and $B$. As
outlined in the introduction, our aim now is to verify that this
equivalence is in fact a Morita equivalence with the help of
\textbf{\ref{StableIndecomposableMorita}}. In other words, we need to
show that the associated tensor functor takes simple modules to simple
modules. Therefore in this intermediate section we collect all the
necessary information on the simple modules 
and some indecomposable modules lying in the three blocks we consider.

In addition to the notation of our standard hypothesis
{\bf\ref{hyp:Co3}}, we fix the following:

\begin{Lemma}[Suleiman-Wilson \cite{SuleimanWilson}]
\label{2-decompositionCo3}
The $2$-decomposition matrix
of $A$ is given in Table~{\rm{\ref{tab:Co3mod2}}},
where $S_1, \cdots , S_5$ are non-isomorphic simple $kG$-modules
in $A$ whose degrees are
$73600$, $896$, $896$, $19712$, $131584$, respectively. The two simple
modules $S_2$ and $S_3$ are dual to each other, while the remaining are
self-dual. There are two pairs $(\chi_6,\chi_7)$ and
$(\chi_{18},\chi_{19})$ of complex conjugate characters. All 
other $\chi$'s are real-valued.
\end{Lemma}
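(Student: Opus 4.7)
The plan is to read off everything from Suleiman--Wilson's computation of the $2$-modular Brauer characters and decomposition numbers of $\mathsf{Co}_3$ \cite{SuleimanWilson}, and then to verify the additional duality and conjugacy claims by inspecting the character tables in the ATLAS \cite{Atlas} and the Modular Atlas \cite{ModularAtlasProject}, both of which are available through \cite{CTblLib}.

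First I would pin down $\mathrm{Irr}(A)$ and $\mathrm{IBr}(A)$. The $2$-block distribution of $\mathrm{Irr}(G)$ is a standard computation via central character values $\omega_{\chi}(\widehat C)$ on $2$-regular class sums, and is in any case tabulated in the ATLAS. Among the non-principal $2$-blocks of $\mathsf{Co}_3$, the block $A$ is characterised uniquely as the one with defect group $P\cong C_2\times C_2\times C_2$, the other non-principal $2$-blocks being of defect zero or cyclic (compare {\bf\ref{hyp:Co3}}). Once $\mathrm{IBr}(A)=\{\varphi_1,\ldots,\varphi_5\}$ is identified via \cite{SuleimanWilson}, the dimensions $73600,896,896,19712,131584$ of the simple modules $S_1,\ldots,S_5$ are read off directly from the Brauer character degrees.

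For the duality and conjugacy pattern, recall that a simple $kG$-module $S_i$ is self-dual if and only if its Brauer character $\varphi_i$ is real-valued on all $2$-regular conjugacy classes, and analogously for ordinary characters. Inspection of \cite{ModularAtlasProject} shows that $\varphi_1,\varphi_4,\varphi_5$ are real-valued, while $\varphi_2^*=\varphi_3$, the non-reality being detected on the $2$-regular irrational classes (for instance $23A$ and $23B$). Analogously, the ATLAS character table of $\mathsf{Co}_3$ exhibits exactly two pairs of non-real ordinary characters in $A$, namely $(\chi_6,\chi_7)$ and $(\chi_{18},\chi_{19})$, each pair being complex conjugate. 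The only genuine sanity check is that these duality data are compatible with the decomposition matrix of Table~\ref{tab:Co3mod2}: complex conjugation acts as an involution on both $\mathrm{Irr}(A)$ and $\mathrm{IBr}(A)$ and must respect decomposition numbers, which one verifies directly. There is no substantive mathematical obstacle beyond careful bookkeeping, since the computational heart of the lemma -- the determination of the decomposition matrix -- is already supplied by \cite{SuleimanWilson}.
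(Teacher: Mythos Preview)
Your proposal is correct and takes essentially the same approach as the paper, which simply cites \cite[\S 6]{SuleimanWilson} without further comment. Your additional remarks on verifying the duality and complex-conjugacy patterns from the ATLAS and Modular Atlas tables are a welcome elaboration, but the paper treats all of this as implicit in the reference.
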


\begin{table}[ht]
 \centering
 \begin{tabular}{r|c|ccccc}
 {\rm{degree}} & \cite[p.135]{Atlas} & 
 $S_1$ & $S_2$ & $S_3 = S_2^*$ & $S_4$ & $S_5$  \\
 \hline
       $73\,600$ & $\chi_{29}$ & $1$  &  $.$    & $.$  &  $.$  &  $.$   \\
         $896$ & $\chi_{6}$  & $.$  &  $1$    & $.$  &  $.$  &  $.$   \\
         $896$ & $\chi_7 = \chi_{6}^*$  & $.$  &  $.$    & $1$  &  $.$  &  $.$   \\
       $93\,312$ & $\chi_{32}$ & $1$  &  $.$    & $.$  &  $1$  &  $.$   \\
       $20\,608$ & $\chi_{18}$ & $.$  &  $1$    & $.$  &  $1$  &  $.$   \\
       $20\,608$ & $\chi_{19}  = \chi_{18}^*$ & $.$  &  $.$    & $1$  &  $1$  &  $.$   \\
      $226\,688$ & $\chi_{38}$ & $1$  &  $1$    & $1$  &  $1$  &  $1$   \\
      $246\,400$ & $\chi_{39}$ & $1$  &  $1$    & $1$  &  $2$  &  $1$   \\
 \end{tabular}

\vspace{0.3cm}

 \caption{The $2$-modular decomposition matrix of $\mathsf{Co}_3$.}
 \label{tab:Co3mod2}
\end{table}

\begin{proof}
See \cite[\S 6]{SuleimanWilson}.
\end{proof}

\begin{Remark}
The $2$-blocks of ${\sf Co}_3$ have been studied before
by several other people,
see \cite[p.193 Table 6]{Fendel},
\cite[\S 7 p.1879]{Landrock1978}
and \cite[Theorems 3.10 and 3.11]{Landrock1981}.
\end{Remark}

\begin{Notation}\label{chi-Si}
We use the notation 
$\chi_{29}, \chi_6, \chi_7, \chi_{32}, 
 \chi_{18}, \chi_{19}, \chi_{38}, \chi_{39}$,
and $S_1, \cdots , S_5$ as in
\linebreak
{\bf\ref{2-decompositionCo3}}.
\end{Notation}

\begin{Lemma}\label{Knoerr}
All simple kG-modules $S_1, \cdots , S_5$ in $A$ have
$P$ as a vertex.
\end{Lemma}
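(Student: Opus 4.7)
The plan is to deduce the statement from a single application of Knörr's theorem on vertices of simple modules in blocks with abelian defect group. The key input is \cite[3.7 Corollary]{Knoerr}, which is the same result already invoked in the proof of {\bf\ref{M12}}(iv): if a $p$-block has an abelian defect group $D$, then every simple module belonging to that block has $D$ itself as a vertex. In our setting, by the standing hypothesis {\bf\ref{hyp:Co3}} the $2$-block $A$ of $kG = k\mathsf{Co}_3$ has abelian defect group $P \cong C_2 \times C_2 \times C_2$, and by {\bf\ref{2-decompositionCo3}} the modules $S_1, \ldots, S_5$ are precisely the isomorphism classes of simple $kG$-modules lying in $A$. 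Applying the cited corollary to each $S_i$ yields the claim directly.

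There is essentially no obstacle here, since the lemma is a direct specialization of a standard theorem. As a consistency check one can observe the necessary divisibility $(|G|_2/|P|) \mid \dim S_i$: with $|G|_2/|P| = 2^{10}/2^3 = 2^7$, one verifies $73\,600 = 2^7 \cdot 575$, $896 = 2^7 \cdot 7$, $19\,712 = 2^8 \cdot 77$, and $131\,584 = 2^9 \cdot 257$, so the $2$-part of each dimension is at least $2^7$, consistent with vertex $P$. This divisibility is only necessary, not sufficient (for instance $S_5$ alone would leave open a vertex of order $2$), so the actual content of the lemma comes from Knörr's theorem.
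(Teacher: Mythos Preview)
Your proof is correct and follows exactly the same approach as the paper: both simply invoke \cite[3.7~Corollary]{Knoerr}, Kn\"orr's result that every simple module in a block with abelian defect group has the full defect group as vertex. Your additional divisibility check is a nice sanity check but, as you note yourself, is not needed for the argument.
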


\begin{proof}
See \cite[3.7.Corollary]{Knoerr}.
\end{proof}

\begin{Lemma}\label{simples-A-N}
We get the following:
\begin{enumerate}
  \renewcommand{\labelenumi}{\rm{(\roman{enumi})}}
    \item
$A_N = k[P \rtimes F_{21}] \otimes \beta 
     \cong {\mathrm{Mat}}_2 ( k[P \rtimes F_{21}])$,
as $k$-algebras.
    \item
We can write
${\mathrm{Irr}}(F_{21}) = \{ k, 1, 1^*, 3, 3^* \}$.
    \item
We can write
\begin{equation*}
 \begin{split}
\mathrm{IBr}(A_N) = 
\{ &\widetilde 2_0 = k_{P \rtimes F{_{21}}}  \otimes 2_{\mathfrak S_3}, \:
   \widetilde 2 = 1 \otimes 2_{\mathfrak S_3}, \\
   &\widetilde 2^* = 1^* \otimes 2_{\mathfrak S_3}, \: 
   \widetilde 6 = 3 \otimes 2_{\mathfrak S_3}, \:
   \widetilde 6^* = 3^* \otimes 2_{\mathfrak S_3}
\}.
 \end{split}
\end{equation*}
Note that there exists a unique simple $\widetilde 2_0$
which is self-dual. 
    \item
The trivial source $A_N$-modules with vertex $P$ are
precisely the simple $A_N$-modules. 
\end{enumerate}
\end{Lemma}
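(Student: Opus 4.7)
The plan is to exploit the tensor product structure $N = (P \rtimes F_{21}) \times \mathfrak{S}_3$ from \textbf{\ref{2-local-Co3}}(i) together with the block decomposition $k\mathfrak{S}_3 = B_0(k\mathfrak{S}_3) \oplus \beta$, reducing all four claims to statements about $k[P \rtimes F_{21}]$ and the $2'$-group $F_{21}$.

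For part (i), I would observe that every block of $kN$ factorises as a tensor product of a block of $k[P \rtimes F_{21}]$ with a block of $k\mathfrak{S}_3$. Since $P$ is a Sylow $2$-subgroup of $P \rtimes F_{21}$ (as $|F_{21}| = 21$ is odd), $k[P \rtimes F_{21}]$ has a unique $2$-block, equal to the whole algebra. Hence the blocks of $kN$ with defect group $\geqslant P$ are $k[P \rtimes F_{21}] \otimes B_0(k\mathfrak{S}_3)$ (defect group $P \times C_2$) and $k[P \rtimes F_{21}] \otimes \beta$ (defect group $P$). Since $A_N$ has defect group exactly $P$, it must be the latter. To verify it is really the Brauer correspondent of $A$, one computes $\Br_{\Delta P}$ of its block idempotent $1 \otimes 1_\beta$ and obtains $1_{kP} \otimes 1_\beta = e$, using \textbf{\ref{2-local-Co3}}(iv). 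The algebra description $A_N \cong \mathrm{Mat}_2(k[P \rtimes F_{21}])$ then follows from $\beta \cong \mathrm{Mat}_2(k)$.

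Parts (ii) and (iii) are then routine. Part (ii) is elementary character theory of $F_{21} = C_7 \rtimes C_3$: three linear characters inflated from $F_{21}/C_7 \cong C_3$ (yielding $k$, $1$, $1^*$) and two $3$-dimensional characters ($3$, $3^*$) induced from nontrivial characters of $C_7$; since $|F_{21}|$ is odd these are also the irreducible $2$-Brauer characters. For part (iii), as $\beta \cong \mathrm{Mat}_2(k)$ has $2_{\mathfrak{S}_3}$ as its unique simple, every simple $A_N$-module is of the form $S \otimes 2_{\mathfrak{S}_3}$ with $S$ simple for $k[P \rtimes F_{21}]$. Since $P$ is a normal $2$-subgroup, every such $S$ is inflated from a simple $kF_{21}$-module, which by (ii) gives precisely the five listed modules. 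Self-duality of $\widetilde{2}_0$ follows from that of $k$ and of $2_{\mathfrak{S}_3}$, while $1^*$ and $3^*$ are the (non-trivial) Galois conjugates of $1$ and $3$ respectively.

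Part (iv) is the main substantive step. For the forward direction, each simple $\widetilde{X} = X \otimes 2_{\mathfrak{S}_3}$ has $P$ in its kernel on the first tensor factor, hence is a direct summand of $k_P \uparrow^{P \rtimes F_{21}} \cong kF_{21}$ (semisimple since $F_{21}$ is a $2'$-group), so $\widetilde{X}$ is a trivial source module with vertex $\leqslant P$; its vertex equals $P$ because the Brauer quotient $X(P) = X$ is nonzero (traces from proper subgroups $R < P$ vanish in characteristic $2$ as $[P:R]$ is even). An application of \textbf{\ref{TensorPuigEquivalence}}(iii) with the projective simple $2_{\mathfrak{S}_3}$ then transports these vertex and trivial-source properties to $\widetilde X$. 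For the converse, any trivial source $A_N$-module with vertex $P$ is a direct summand of
\[
 k_P \uparrow^N \cdot 1_{A_N} \,\cong\, kF_{21} \otimes (k\mathfrak{S}_3 \cdot 1_\beta) \,\cong\, kF_{21} \otimes 2_{\mathfrak{S}_3}^{\oplus 2},
\]
whose indecomposable summands are, by semisimplicity of $kF_{21}$, exactly the five simples listed in (iii). I expect the only real obstacle to be bookkeeping with the Brauer-construction argument to pin down the vertex precisely as $P$ (rather than a proper subgroup); once (i) and (iii) are in hand, the remainder of (iv) is a direct calculation.
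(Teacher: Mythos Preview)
Your proposal is correct and follows essentially the same route as the paper: parts (i)--(iii) are deduced from the tensor product structure $N=(P\rtimes F_{21})\times\mathfrak S_3$ in \textbf{\ref{2-local-Co3}}, and part (iv) from the fact that for a group with normal Sylow $p$-subgroup $P$ the indecomposable trivial source modules with vertex $P$ are precisely the simples. The paper compresses (iv) to a citation of \cite[Chap.~4 Problem~10]{NagaoTsushima} (Green correspondence with $N_N(P)=N$), whereas you unpack the same argument via the decomposition of $k_P{\uparrow}^N\cdot 1_{A_N}$ and the Brauer construction; both are the same idea at different levels of detail.
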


\begin{proof}
(i)--(iii) are easy by {\bf\ref{2-local-Co3}}
and the definition of $A_N$.

(iv) This follows from (iii) and the Green correspondence
\cite[Chap.4 Problem 10]{NagaoTsushima}.
\end{proof}

\begin{Lemma}\label{BrauerChar-B}
Set $\mathfrak R = R(3) \cong \SL_2(8) \rtimes C_3$.
We get the following:
\begin{enumerate}
  \renewcommand{\labelenumi}{\rm{(\roman{enumi})}}
 \item For the principal block of $k\mathfrak{R}$ we have
\[ {\mathrm{Irr}}(B_0(k\mathfrak R))=
\{ 1_{\mathfrak R}, \chi_1, \chi_1^*, 
   \chi_{7a}, \chi_{7b}, \chi_{7c}, \chi_{21}, \chi_{27} \},\]
and
\[{\mathrm{IBr}}(B_0(k\mathfrak R)) =  \{ k_{\mathfrak R}, 1, 1^*, 6, 12 \},\]
where the indices give the degrees (dimensions).
The simples $k_{\mathfrak R}, 6, 12$ are self-dual,
and the simples $k_{\mathfrak R}, 1, 1^*$ are
trivial source $k\mathfrak R$-modules.
   \item For the block $B$ we have
\[{\mathrm{Irr}}(B) = 
\{ \chi_{2a}, \chi_2, \chi_2^*, \chi_{14a}, \chi_{14b}, \chi_{14c},
   \chi_{42}, \chi_{54} \},\]
and
\begin{equation*}
\begin{split}
{\mathrm{IBr}}(B) = 
\{ &2_0 = k_{\mathfrak R} \otimes 2_{\mathfrak S_3}, \:
   2 = 1 \otimes 2_{\mathfrak S_3}, \\
   &2^* = 1^*  \otimes 2_{\mathfrak S_3}, \:
12 = 6 \otimes 2_{\mathfrak S_3}, \:
24 = 12\otimes 2_{\mathfrak S_3} \},
\end{split}
\end{equation*}
where the indices give the degrees (dimensions).
The simple $kH$-modules $2_0, 2, 2^*$ in $B$ are
trivial source modules,
the simple $kH$-modules $2_0$, $12$, $24$ are self-dual,
and all the simples in $B$ have $P$ as their vertices. 
\end{enumerate}
\end{Lemma}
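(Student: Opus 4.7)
The plan is to split the proof into two parts, establishing (i) by direct appeal to the known character theory of $\mathfrak{R} := R(3)$, and then deducing (ii) from the identification $B = B_0(k\mathfrak{R}) \otimes \beta$ together with tensor-product arguments. For (i), I would start from the ordinary character table in \cite[p.6]{Atlas}. Since $|\mathfrak{R}| = 1512 = 2^{3}\cdot 3^{3}\cdot 7$, the only $2$-block of $\mathfrak{R}$ of positive defect is the principal block (with defect group $P$ of order $8$), while the remaining $2$-blocks are of defect zero, i.e., they consist of characters of degree divisible by $8$. Stripping these off from $\Irr(\mathfrak{R})$ leaves exactly the eight characters listed in the claim. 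The $2$-modular decomposition matrix and Brauer character table of $\mathfrak{R}$ were computed by Landrock and Michler \cite{LandrockMichler1980}; from their tables one reads off $\IBr(B_0(k\mathfrak{R})) = \{k_{\mathfrak{R}}, 1, 1^{*}, 6, 12\}$ with the stated dimensions, and verifies that the Brauer characters of $k_{\mathfrak{R}}, 6, 12$ are real-valued on $2$-regular classes (hence the corresponding simples are self-dual), while $1$ and $1^{*}$ are Galois conjugates. Finally, the three one-dimensional simples $k_{\mathfrak{R}}, 1, 1^{*}$ factor through the quotient $\mathfrak{R}/\SL_2(8) \cong C_3$ (since $\SL_2(8)$ is perfect and so kills every $1$-dimensional character); as $C_3$ is a $2'$-group, $k[C_3]$ is semisimple and each of these three simples appears as a direct summand of it, so after inflation they appear as direct summands of the permutation module $k_{\SL_2(8)}{\uparrow}^{\mathfrak{R}}$, proving that they are trivial source modules.

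For (ii), I invoke Lemma \ref{2-local-Co3}(xi) to write $B = B_0(k\mathfrak{R}) \otimes \beta$, where $\beta$ is the unique defect-zero $2$-block of $k\mathfrak{S}_3$. The block $\beta$ contains exactly one ordinary irreducible character (of degree $2$) and one Brauer irreducible, namely $2_{\mathfrak{S}_3}$, both self-dual. Taking outer tensor products with the data obtained in (i) yields the asserted descriptions of $\Irr(B)$ and $\IBr(B)$, together with the claimed dimensions. Self-duality of $2_0, 12, 24$ follows from self-duality of $k_{\mathfrak{R}}, 6, 12$ combined with self-duality of $2_{\mathfrak{S}_3}$. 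Since $\beta$ has defect zero, $2_{\mathfrak{S}_3}$ is projective, and Lemma \ref{TensorPuigEquivalence}(iii) then applies to give: for any simple $X$ in $B_0(k\mathfrak{R})$, the vertex of $X \otimes 2_{\mathfrak{S}_3}$ equals the vertex of $X$ (crossed with the trivial subgroup of $\mathfrak{S}_3$), and $X \otimes 2_{\mathfrak{S}_3}$ is a trivial source module if and only if $X$ is. This immediately yields the trivial source property of $2_0, 2, 2^{*}$. For the vertex statement, invoking Kn\"orr's Corollary \cite[3.7]{Knoerr} applied to the principal block $B_0(k\mathfrak{R})$, which has the abelian defect group $P$, shows that every simple in $B_0(k\mathfrak{R})$ has vertex exactly $P$; combined with Lemma \ref{TensorPuigEquivalence}(iii), this gives the claim that all five simples in $B$ have vertex $P$.

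I expect no deep obstacle here: the proof reduces to matching the ATLAS and modular ATLAS labels to the explicit block, duality, and trivial source data asserted in the statement, which is a straightforward book-keeping exercise that can be confirmed by inspection and, if desired, cross-checked in {\sf GAP} \cite{GAP}.
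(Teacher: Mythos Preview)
Your proposal is correct and follows essentially the same route as the paper: read off the block data from \cite[p.6]{Atlas} and the modular Atlas tables for (i), then use the tensor decomposition $B=B_0(k\mathfrak R)\otimes\beta$ from {\bf\ref{2-local-Co3}}(xi) together with Kn\"orr's result \cite[3.7]{Knoerr} for (ii). The only cosmetic difference is that you apply Kn\"orr to $B_0(k\mathfrak R)$ and then transfer vertices via {\bf\ref{TensorPuigEquivalence}}(iii), whereas the paper applies Kn\"orr directly to the simples in $B$; both are valid, and your explicit justification that the one-dimensional simples are inflated from $\mathfrak R/\SL_2(8)\cong C_3$ (hence summands of $k_{\SL_2(8)}{\uparrow}^{\mathfrak R}$) nicely fills in what the paper leaves as ``Clearly''.
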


\begin{proof}
(i) It follows from \cite[p.6]{Atlas}, and
\cite[$L_2(8).3$ (mod 2)]{ModularAtlas}
or \cite[$L_2(8).3$ (mod 2)]{ModularAtlasProject},
see {\bf\ref{2-local-Co3}}(xi).
Clearly, $k_{\mathfrak R}, 1, 1^*$ 
are trivial source $k\mathfrak R$-modules.

(ii) 
$2_{\mathfrak S_3}$ is a trivial source $k\mathfrak S_3$-module.
Therefore the simples $2_0, 2, 2^*$ are
trivial source $kH$-modules, by (i) and {\bf\ref{2-local-Co3}}(xi).
Finally, use \cite[3.7.Corollary]{Knoerr}.
\end{proof}

\begin{Notation}\label{simples-A_N-B}
We use the notation $\mathfrak R$, 
$\chi_{2a}, \chi_2, \chi_2^*, \chi_{14a}, \chi_{14b}, \chi_{14c},
  \chi_{42}, \chi_{54}$,
$\widetilde 2_0,  \widetilde 2, \widetilde 2^*,
 \widetilde 6, \widetilde 6^*$ and
$2_0, 2, 2^*, 12, 24$ 
as in
{\bf\ref{simples-A-N}} and {\bf\ref{BrauerChar-B}}.
\end{Notation}

\begin{Lemma}[Landrock-Michler \cite{LandrockMichler1980}]
\label{B-PIM}
The radical and socle series of projective indecomposable
$kH$-modules in $B$ are the following:
$$
\boxed{
\begin{matrix}
    2_0 \\
    12  \\
2_0 \ 2 \ 2^* \ 24 \\
    12 \ 12 \\
2_0 \ 2 \ 2^* \ 24 \\
    12  \\
    2_0 
\end{matrix}
},
\ \ \
\boxed{
\begin{matrix}
    2 \\
    12  \\
2_0 \ 2 \ 2^* \ 24 \\
    12 \ 12 \\
2_0 \ 2 \ 2^* \ 24 \\
    12  \\
    2 
\end{matrix}
}
\ \ \
\boxed{
\begin{matrix}
    2^* \\
    12  \\
2_0 \ 2 \ 2^* \ 24 \\
    12 \ 12 \\
2_0 \ 2 \ 2^* \ 24 \\
    12  \\
    2^* 
\end{matrix}
}
\ \ \ 
\boxed{
\begin{matrix}
    12  \\
2_0 \ 2 \ 2^* \ 24 \\
    12 \ 12 \ 12 \\
2_0 \ 2_0 \ 2 \ 2 \ 2^* \ 2^* \ 24 \ 24 \\
    12  \ 12 \ 12 \\
2_0 \ 2 \ 2^* \ 24 \\
    12 
\end{matrix}
}
\ \ \
\boxed{
\begin{matrix}
    24 \\
    12  \\
2_0 \ 2 \ 2^*\\
    12 \\
2_0 \ 2 \ 2^* \\
    12  \\
    24
\end{matrix}
}
$$
\end{Lemma}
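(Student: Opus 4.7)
The plan is to reduce the computation of the radical and socle series of the projective indecomposable $kH$-modules in $B$ to the corresponding question for the principal block $B_0(k\mathfrak R)$ of $\mathfrak R = R(3)$, and then to appeal to the Landrock--Michler analysis in \cite{LandrockMichler1980}. The key observation is the tensor factorisation $B = B_0(k\mathfrak R) \otimes \beta$ from {\bf\ref{2-local-Co3}}(xi), where by {\bf\ref{2-local-Co3}}(iv) the factor $\beta$ is a defect-zero block of $k\mathfrak S_3$ isomorphic to $\mathrm{Mat}_2(k)$, with unique simple module $2_{\mathfrak S_3}$.

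Since $\beta$ is a simple $k$-algebra, the functor $- \otimes_k 2_{\mathfrak S_3} : \mathrm{mod}{\text -}B_0(k\mathfrak R) \to \mathrm{mod}{\text -}B$ is a Morita equivalence, which is a special case of {\bf\ref{TensorMorita}}(i). Being exact and taking simples to simples, it preserves radical and socle series. Under our labelling fixed in {\bf\ref{simples-A_N-B}} it induces the bijection on simples $k_{\mathfrak R} \mapsto 2_0$, $1 \mapsto 2$, $1^* \mapsto 2^*$, $6 \mapsto 12$, $12 \mapsto 24$. Hence for each simple $T$ in $B$, the PIM $P_B(T)$ is obtained from the PIM $P_{B_0(k\mathfrak R)}(T')$ of the corresponding simple $T'$ by this tensoring, and all of its Loewy layers are read off from the corresponding layers of $P_{B_0(k\mathfrak R)}(T')$ via this bijection.

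It therefore suffices to verify the five displayed radical/socle diagrams for the PIMs of $B_0(k\mathfrak R)$ after substituting $2_0 \leftrightarrow k_{\mathfrak R}$, $2 \leftrightarrow 1$, $2^* \leftrightarrow 1^*$, $12 \leftrightarrow 6$ and $24 \leftrightarrow 12$. This is precisely what Landrock and Michler accomplish in \cite{LandrockMichler1980}: starting from the $2$-decomposition matrix and the Cartan matrix of $\mathfrak R$, they determine the Ext-quiver and the complete Loewy structure of every PIM in the principal block. The main point requiring attention is the matching of the labelling of simples between \cite{LandrockMichler1980} and our convention, which is however fixed unambiguously by the dimensions together with the self-duality and complex-conjugate data recorded in {\bf\ref{BrauerChar-B}}(i); the main obstacle is thus bibliographic rather than mathematical.
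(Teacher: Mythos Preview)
Your proposal is correct and takes essentially the same approach as the paper: the paper's proof simply cites \cite[Theorem~3.9, Theorem~4.1]{LandrockMichler1980} together with {\bf\ref{BrauerChar-B}}, the latter providing exactly the bridge between $B$ and $B_0(k\mathfrak R)$ that you spell out via the tensor factorisation with $\beta$. Your version makes explicit the Morita equivalence argument that the paper leaves implicit in its reference to {\bf\ref{BrauerChar-B}}.
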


\begin{proof}
This follows from
\cite[Theorem 3.9, Theorem 4.1]{LandrockMichler1980} and
{\bf\ref{BrauerChar-B}}.
\end{proof}

\begin{Lemma}\label{tsmR(3)C2}
Recall that $R$ is a subgroup of $P$ with $R \cong C_2$,
see {\bf\ref{hyp:Co3}}.
\begin{enumerate}
\renewcommand{\labelenumi}{\rm{(\roman{enumi})}}
    \item
The Scott module ${\mathrm{Scott}}(\mathfrak R, R)$ has the
radical and socle series
$$
\boxed
{
\begin{matrix}
k \\ 6 \\ 1 \ 1^* \ 12 \\ 6 \\ k 
 \end{matrix}
}
\leftrightarrow
1_{\mathfrak R} + \chi_{27}.
$$
    \item
A $kH$-module 
${\mathrm{Scott}}(\mathfrak R, R) \otimes 2_{\mathfrak S_3}$
has the radical and socle series
$$
\boxed
{
\begin{matrix}
2_0 \\ 12 \\ 2 \ 2^* \ 24 \\ 12 \\ 2_0 
 \end{matrix}
}
\leftrightarrow
\chi_{2a} + \chi_{54}.
$$
\end{enumerate}
\end{Lemma}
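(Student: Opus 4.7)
The plan is to obtain part (ii) from part (i) via the standard Morita equivalence afforded by the defect-zero block $\beta$, and to handle part (i) by combining character theory with the explicit PIM structure supplied by {\bf\ref{B-PIM}}.

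For part (i), Scott$(\mathfrak{R},R)$ is the unique indecomposable summand of $k_R{\uparrow}^{\mathfrak{R}}$ carrying $k_{\mathfrak{R}}$ as head and, by self-duality, as socle; it is a trivial source module with vertex $R$ lying in $B_0(k\mathfrak{R})$ since $1_{\mathfrak{R}}$ occurs as a constituent of its ordinary character. That character must be a non-negative integer combination of the characters in $\mathrm{Irr}(B_0(k\mathfrak{R}))$ listed in {\bf\ref{BrauerChar-B}}(i), with $1_{\mathfrak{R}}$ occurring exactly once. I would compute the character of $k_R{\uparrow}^{\mathfrak{R}}$ via the usual permutation-module formula on each conjugacy class of $\mathfrak{R}$ using \cite[p.6]{Atlas}, decompose into irreducibles, and single out the indecomposable summand containing $1_{\mathfrak{R}}$; the resulting character (also recorded in the Landrock--Michler analysis of $R(3)$) is $1_{\mathfrak{R}}+\chi_{27}$, of degree $28$, yielding the composition factors $2\times[k_{\mathfrak{R}}]+2\times[6]+[1]+[1^*]+[12]$.

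To pin down the Loewy structure, observe that since $B=B_0(k\mathfrak{R})\otimes\beta$ with $\beta\cong\mathrm{Mat}_2(k)$, the PIMs of $B_0(k\mathfrak{R})$ are obtained from those of $B$ in {\bf\ref{B-PIM}} by stripping off the tensor factor $2_{\mathfrak{S}_3}$, giving in particular the projective cover $P(k_{\mathfrak{R}})$ with Loewy layers $k/6/k,1,1^*,12/6,6/k,1,1^*,12/6/k$. Inside $P(k_{\mathfrak{R}})$, a self-dual quotient with head $k_{\mathfrak{R}}$ and the composition factors just determined is forced to be the stated diagram.

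For part (ii), the functor $-\otimes_k 2_{\mathfrak{S}_3}$ induces a Puig equivalence between $B_0(k\mathfrak{R})$ and $B$ by {\bf\ref{TensorMorita}}(i), or equivalently by {\bf\ref{TensorPuigEquivalence}}(i) applied to the defect-zero block $\beta$. Under this equivalence the five simples $k_{\mathfrak{R}},1,1^*,6,12$ correspond to $2_0,2,2^*,12,24$ as recorded in {\bf\ref{BrauerChar-B}}(ii), so the Loewy diagram of part (i) lifts verbatim after relabelling. On the character side, $(1_{\mathfrak{R}}+\chi_{27})$ tensored with the character $\psi$ of $2_{\mathfrak{S}_3}$ has degrees $2$ and $54$, matching $\chi_{2a}+\chi_{54}$ in the list {\bf\ref{BrauerChar-B}}(ii). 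The main technical step is the identification of the ordinary character of Scott$(\mathfrak{R},R)$; once this is in hand the Loewy structure is essentially forced by self-duality and the explicit form of $P(k_{\mathfrak{R}})$, and part (ii) is pure bookkeeping.
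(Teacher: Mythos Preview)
Your outline for part~(ii) is correct and matches the paper exactly: the Morita equivalence $-\otimes 2_{\mathfrak S_3}$ between $B_0(k\mathfrak R)$ and $B$ transports the Loewy diagram verbatim, so only part~(i) carries any content.

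For part~(i), however, the step ``single out the indecomposable summand containing $1_{\mathfrak R}$'' from the character decomposition of $k_R{\uparrow}^{\mathfrak R}$ is where your argument is incomplete. The permutation module $k_R{\uparrow}^{\mathfrak R}$ has dimension $|\mathfrak R|/2=756$, and its ordinary character decomposes into many irreducibles; knowing this decomposition does not by itself tell you which constituents assemble into the Scott summand. You would need an independent determination of the trivial source modules with vertex $C_2$ in $B_0(k\mathfrak R)$, or else a direct citation of Landrock--Michler for the character of $\mathrm{Scott}(\mathfrak R,R)$ rather than merely a confirmation. Relatedly, your phrase ``carrying $k_{\mathfrak R}$ as head'' presupposes that the head of the Scott module is \emph{simple}; Scott modules have $k$ as a quotient and as a submodule, but their heads need not be simple in general, so this too needs justification before you can invoke ``a self-dual quotient of $P(k_{\mathfrak R})$ with head $k_{\mathfrak R}$''.

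The paper circumvents both issues by working not with $k_R{\uparrow}^{\mathfrak R}$ but with $X=k_M{\uparrow}^{\mathfrak R}$ for the maximal subgroup $M=C_9\rtimes C_6$ of index~$28$, whose Sylow $2$-subgroup is $C_2$. Then $1_M{\uparrow}^{\mathfrak R}=1_{\mathfrak R}+\chi_{27}$ has only two constituents, so the trivial-source inner-product formula gives $[X,X]^{\mathfrak R}=2$ and $[X,k_{\mathfrak R}]^{\mathfrak R}=1$ directly; from this one deduces $X/\mathrm{rad}(X)\cong\mathrm{soc}(X)\cong k_{\mathfrak R}$, hence $X$ is indecomposable and therefore equals $\mathrm{Scott}(\mathfrak R,R)$. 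Once the simple head is established, your argument pinning down the Loewy layers from the shape of $P(k_{\mathfrak R})$ goes through exactly as you describe.
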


\begin{proof}
By {\bf\ref{BrauerChar-B}}(ii), it suffices to prove (i).
\cite[p.6]{Atlas} says that
$\mathfrak R$ has a maximal subgroup $M$ such that
$M = C_9 \rtimes C_6$, $|\mathfrak R: M| = 28$ and
$1_M{\uparrow}^{\mathfrak R} = 1_{\mathfrak R} + \chi_{27}$.
Set $X = k_M{\uparrow}^{\mathfrak R}$. Then
$X = 2 \times [k] + [1] + [1^*] + 2 \times [6] + [12]$, 
as composition factors
by \cite[$L_3(8).3$ (mod 2)]{ModularAtlas} and 
\cite[$L_3(8).3$ (mod 2)]{ModularAtlasProject}.
It holds by \cite[4 Thm.8.9(i)]{NagaoTsushima} that
$[X, X]^{\mathfrak R} = 2$,
$[X, k]^{\mathfrak R} = [k, X]^{\mathfrak R} = 1$ 
Thus, $X / \rad(X) \cong \soc(X) \cong k_{\mathfrak R}$. 
Now, it follows from \cite[Theorem 4.1]{LandrockMichler1980} that
$P(k_{\mathfrak R})$ has the following radical and socle series:
$$
P(k_{\mathfrak R}) \ = \ 
\boxed{
\begin{matrix}
    k   \\
    6 \\
 k \ 1 \ 1^* \ 12 \\
    6 \ 6 \\
 k \ 1 \ 1^* \ 12 \\
    6 \\
    k
\end{matrix}
}.
$$
Since there is an epimorphism 
$P(k_{\mathfrak R}) \twoheadrightarrow X$,
we infer $\soc(X)   \varsubsetneqq 
          \soc^2(X) \varsubsetneqq 
          \rad^2(X) \varsubsetneqq \rad(X)$ and
$\rad(X)/\rad^2(X)\cong \soc^2(X)/\soc(X)\cong 6$.
Thus $X$ has the radical and socle series as asserted.
By the definition of $X$, it holds that 
$X = {\mathrm{Scott}}(\mathfrak R, C_2)$,
see \cite[Chap.4 Theorem 8.4 and Corollary 8.5]{NagaoTsushima}.
\end{proof}

\begin{Lemma}\label{tsmR(3)C2C2}
Recall that $Q$ is a subgroup of $P$ with $Q \cong C_2 \times C_2$,
see {\bf\ref{hyp:Co3}}.
Set $U = {\mathrm{Scott}}(\mathfrak R, Q)$.
\begin{enumerate}
\renewcommand{\labelenumi}{\rm{(\roman{enumi})}}
    \item We have
$U \leftrightarrow 1_{\mathfrak R} + \chi_{7a} + 2 \times \chi_{27}$,
and
$ U= 4 \times [k_{\mathfrak R}] + 2 \times [1] + 2 \times [1^*] 
   + 5 \times [6] + 2 \times [12]$
as composition factors.
    \item
Set $V = U \otimes 2_{\mathfrak S_3}$. Then
$V$ is a trivial source $kH$-module in $B$ with vertex $Q$,
$V \leftrightarrow \chi_{2a} + \chi_{14a} + 2 \times \chi_{54}$,
and
$V = 4 \times [2_0] + 2 \times [2] + 2 \times [2^*] 
   + 5 \times [12] + 2 \times [24]$,
as composition factors.
\end{enumerate}
\end{Lemma}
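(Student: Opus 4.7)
The plan is to first reduce part (ii) to part (i) by an outer-tensor-product argument with $2_{\mathfrak S_3}$, and then to establish (i) by identifying $U=\mathrm{Scott}(\mathfrak R,Q)$ as a Scott summand of a permutation $k\mathfrak R$-module.

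For the reduction step, recall that $2_{\mathfrak S_3}$ is the unique simple module in the defect-zero block $\beta$ of $k\mathfrak S_3$, hence is projective and, in particular, a trivial-source simple $k\mathfrak S_3$-module whose lift to $\mathcal O$ affords the $2$-dimensional ordinary character of $\mathfrak S_3$. Since $B=B_0(k\mathfrak R)\otimes\beta$ by {\bf\ref{2-local-Co3}}(xi), applying {\bf\ref{TensorPuigEquivalence}}(iii) to $V=U\otimes 2_{\mathfrak S_3}$ shows that $V$ is an indecomposable trivial-source $kH$-module in $B$ with vertex $Q\times 1\cong Q$; its ordinary character lift is the outer product of those of $U$ and of $2_{\mathfrak S_3}$, which by inspection of {\bf\ref{BrauerChar-B}} equals $\chi_{2a}+\chi_{14a}+2\chi_{54}$, and the composition factors of $V$ are obtained from those of $U$ via the identifications $k_{\mathfrak R}\otimes 2_{\mathfrak S_3}=2_0$, $1\otimes 2_{\mathfrak S_3}=2$, $1^*\otimes 2_{\mathfrak S_3}=2^*$, $6\otimes 2_{\mathfrak S_3}=12$ and $12\otimes 2_{\mathfrak S_3}=24$.

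For part (i), I would choose $L=Q\rtimes\langle\sigma\rangle\cong\mathfrak A_4$ inside $\mathfrak R=\SL_2(8)\rtimes C_3$, where $\sigma$ is an element of order $3$ in the outer $C_3$ realising the Frobenius $x\mapsto x^2$ on $P\cong\mathbb F_8^+$. A direct check in $\mathbb F_8$ shows that $\sigma$ normalises a unique Klein-four subgroup of $P$, namely $\{0,\alpha,\alpha^2,\alpha^4\}$ for $\alpha$ a root of $x^3+x+1$, and by {\bf\ref{2-local-Co3}}(ix) we may assume $Q$ is this subgroup. Then $Q$ is a Sylow $2$-subgroup of $L$, so $U=\mathrm{Scott}(\mathfrak R,Q)=\mathrm{Scott}(\mathfrak R,L)$ is a direct summand of the permutation module $X=k_L\uparrow^{\mathfrak R}$ of dimension $[\mathfrak R:L]=126$. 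Decomposing the ordinary permutation character $1_L\uparrow^{\mathfrak R}$ into irreducibles via the character table \cite[p.6]{Atlas} and Frobenius reciprocity, reducing modulo $2$ using the decomposition matrix in \cite[$L_2(8).3$ (mod 2)]{ModularAtlas}, and then using the Loewy structure of the projective indecomposables of $B_0(k\mathfrak R)$ given by Landrock-Michler \cite[Theorem 4.1]{LandrockMichler1980} (the $\mathfrak R$-analogue of {\bf\ref{B-PIM}}) allows one to pare away the non-Scott summands of $X$ and isolate $U$ as the unique indecomposable summand of $X$ containing $k_{\mathfrak R}$ in its head; the ordinary character lift and composition factors are then read off and match the claim, as witnessed by the dimension check $62=1+7+54=4+2+2+30+24$.

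The main obstacle is the clean separation of the Scott summand from the remaining indecomposable summands of $X$, which a priori may include modules of smaller vertex as well as constituents in non-principal blocks; pinning these down requires combining character-theoretic bookkeeping with the block-by-block PIM structure of $k\mathfrak R$, and is most transparently verified through a direct {\sf MeatAxe} computation on $X$ (along with a socle/radical-series analysis) entirely analogous to the argument used in the proof of {\bf\ref{tsmR(3)C2}}.
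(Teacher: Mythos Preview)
Your reduction of (ii) to (i) via {\bf\ref{TensorPuigEquivalence}} and your choice of the subgroup $L\cong\mathfrak A_4$ with Sylow $2$-subgroup $Q$ are exactly the paper's starting point. The difference is that you flag the separation of $U$ inside $k_L{\uparrow}^{\mathfrak R}$ as an obstacle to be handled by a {\sf MeatAxe} computation, whereas the paper carries this step out by hand.

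Concretely, after cutting to the principal block the paper first shows that a single copy of $P(12)$ splits off. This is done by computing $12{\downarrow}_Q$ explicitly: writing $12{\downarrow}_{\SL_2(8)}=4_1\oplus 4_2\oplus 4_3$ with $4_i=V_j\otimes V_l$ for the Frobenius twists $V_1,V_2,V_3$ of the natural $2$-dimensional module, one checks on matrices that $(1+g_1)(1+g_\alpha)\in kQ$ does not annihilate any $4_i$, so each $4_i{\downarrow}_Q\cong P(k_Q)$ and hence $12{\downarrow}_Q\cong 3\cdot P(k_Q)$. Robinson's criterion \cite{Robinson1989} then gives $[P(12)\mid k_Q{\uparrow}^{\mathfrak R}]=3$, and comparing with the three inductions $k_{\mathfrak A_4}{\uparrow}^{\mathfrak R}$, $\psi_1{\uparrow}^{\mathfrak R}$, $\psi_1^*{\uparrow}^{\mathfrak R}$ (each of which contains $\chi_{21}$ exactly once, while $P(12)\leftrightarrow\chi_{21}+\chi_{27}$) forces $P(12)\mid k_{\mathfrak A_4}{\uparrow}^{\mathfrak R}$. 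This leaves a complement $X$ with $X\leftrightarrow 1_{\mathfrak R}+\chi_{7a}+2\chi_{27}$.

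The paper then pins down $\chi_{\widehat U}$ among the subsums of this character containing $1_{\mathfrak R}$ by three short eliminations: $\dim_k U$ must be even since $Q\lneqq P$, which rules out $1_{\mathfrak R}+2\chi_{27}$ and $1_{\mathfrak R}+\chi_{7a}+\chi_{27}$; Landrock's lemma \cite[II Lemma~12.6]{Landrock1983} on character values of trivial source modules at $p$-elements rules out $1_{\mathfrak R}+\chi_{7a}$ (value $0$ on $2A$); and $1_{\mathfrak R}+\chi_{27}$ would force $U\cong\mathrm{Scott}(\mathfrak R,R)$ by the argument of {\bf\ref{tsmR(3)C2}}, contradicting the vertex. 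Hence $U=X$. So rather than a direct module computation, the missing ingredient in your outline is the combination of Robinson's projective-summand count with these elementary character-theoretic exclusions.
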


\begin{proof}
(i) We know that $\mathfrak R$ has a subgroup $\mathfrak A_4$,
see \cite[p.6]{Atlas}. 
Clearly,
${\mathrm{Irr}}(\mathfrak A_4) = 
\{  1_{\mathfrak A_4}, \psi_1, \psi_2 = \psi_1^*, \psi_3 \}$
where $\psi_3$ has degree $3$.
It follows from 
computations with {\sf GAP} \cite{GAP} that
\begin{equation}\label{eq:1tsmR(3)C2C2}
1_{\mathfrak A_4}{\uparrow^{\mathfrak R}} \cdot 1_{B_0(k\mathfrak R)} 
= 
 1_{\mathfrak R} + \chi_{7a} + \chi_{21} + 3 \times \chi_{27},
\end{equation}
\begin{equation}\label{eq:2tsmR(3)C2C2}
\psi_1{\uparrow^{\mathfrak R}}\cdot 1_{B_0(k\mathfrak R)} 
= 
 \chi_1 + \chi_{7b} + \chi_{21} + 3 \times \chi_{27},
\end{equation}
\begin{equation}\label{eq:3tsmR(3)C2C2}
\psi_{1^* }{\uparrow^{\mathfrak R}}\cdot 1_{B_0(k\mathfrak R)} 
= 
 \chi_{1^*} + \chi_{7c} + \chi_{21} + 3 \times \chi_{27}.
\end{equation}
Let 
$X = k_{\mathfrak A_4}{\uparrow^\mathfrak R} \cdot 1_{B_0(k\mathfrak R)}$.
First, we want to claim that $P(12) \mid X$, where $P(12)$ 
is the projective cover $12$.

Set $S = \SL_2(8)$. By Clifford theory, we have
$12 \downarrow_S \, = \, 4_1 \oplus 4_2 \oplus 4_3$, 
where $4_1$, $4_2$, $4_3$
are non-isomorphic simple $kS$-modules in $B_0(kS)$ of dimension $4$,
see \cite[$L_2(8)$ (mod $2$)]{ModularAtlas}
and \cite[$L_2(8)$ (mod $2$)]{ModularAtlasProject}.
Let $V_1$ be the tautological $kS$-module, which is simple of dimension $2$,
and let $V_2$ and $V_3$ be its images under the action of the 
Frobenius automorphism of $\mathbb F_8$. Then 
the $V_i$ are pairwise non-isomorphic, and by \cite[p.220]{Alperin1979} 
we may assume that
\[
4_1 = V_1 \otimes V_2, \ \ 4_2 = V_2 \otimes V_3, \ \ 4_3 = V_3 \otimes V_1.
\] 

Set $g_a=\begin{pmatrix} 1 & a \\ 0 & 1 \end{pmatrix}\in S$
for all $a \in \mathbb F_8$. We may assume that 
$P = \{ g_a \mid  a \in \mathbb F_8 
     \} \leq S$,
namely, $P$ is a Sylow $2$-subgroup of $S$ with 
$P \cong C_2 \times C_2 \times C_2$,
and that 
$Q=\{ g_0, g_1, g_\alpha, g_{1+\alpha} \}$,
where $\alpha\in \mathbb F_8^\ast$ is a fixed primitive root,
hence $Q \cong C_2 \times C_2$.
Now the action of 
$g_0+ g_1+ g_\alpha+ g_{1+\alpha} 
=(1+g_1)(1+g_\alpha) \in kQ$
is easily described in terms of Kronecker products of matrices,
and it turns out that this element does not annihilate any of the
$kQ$-modules $4_i$. Therefore $4_i {\downarrow_Q}$ has a projective
indecomposable summand, and thus we infer that $4_i {\downarrow_Q}=P(k_Q)$.

We conclude
$12{\downarrow_Q} = 12{\downarrow_S}{\downarrow_Q}
 = (4_1 \oplus 4_2 \oplus 4_3){\downarrow_Q} \cong 3 \times P(k_Q)$,
 and it follows from \cite[Theorem 3]{Robinson1989} that
\begin{align*}
   3 
&= [P(k_Q) \mid
   12{\downarrow_Q}]^Q 
= [P(12) \mid {k_Q}{\uparrow^{\mathfrak R}}]^{\mathfrak R}
 = [P(12) \mid 
  k_Q{\uparrow^{\mathfrak A_4}}{\uparrow^{\mathfrak R}}]^{\mathfrak R}
\\
&= [P(12) \mid 
(k_{\mathfrak A_4} \oplus 1_{\mathfrak A_4} 
\oplus 1_{\mathfrak A_4}^*){\uparrow^{\mathfrak R}}]
^{\mathfrak R}
 = [P(12) \mid ({k_{\mathfrak A_4}}  {\uparrow^{\mathfrak R}} 
     \oplus {1_{\mathfrak A_4}}  {\uparrow^{\mathfrak R}} 
     \oplus {1_{\mathfrak A_4}^*}{\uparrow^{\mathfrak R}})]^{\mathfrak R}.
\end{align*}

Suppose that 
     $P(12) \nmid 
      k_{\mathfrak A_4}{\uparrow^{\mathfrak R}}$.
Then 
$3 \times P(12) \mid 
    ({1_{\mathfrak A_4}}{\uparrow^{\mathfrak R}} 
     \oplus {1_{\mathfrak A_4}^*}{\uparrow^{\mathfrak R}})$.
Since $P(12) \leftrightarrow \chi_{21} + \chi_{27}$ by 
\cite[$L_2(8).3$ (mod $2$)]{ModularAtlas} and
\cite[$L_2(8).3$ (mod 2)]{ModularAtlasProject}, we know by
\eqref{eq:2tsmR(3)C2C2} and \eqref{eq:3tsmR(3)C2C2} that
$3 \times \chi_{21} + 3 \times \chi_{27}$ is contained in
$(\chi_1 + \chi_{7b} + \chi_{21} + 3 \times \chi_{27}) + 
 (\chi_1^* + \chi_{7c} + \chi_{21} + 3 \times \chi_{27})$, which
contradicts the multiplicity of $\chi_{21}$.

Therefore $P(12) \mid k_{\mathfrak A_4}{\uparrow^{\mathfrak R}}$.
Since $P(12) \leftrightarrow \chi_{21} + \chi_{27}$ as seen above, 
it follows from \eqref{eq:1tsmR(3)C2C2} that
\[
    k_{\mathfrak A_4}{\uparrow^{\mathfrak R}}
    {\cdot} 1_{B_0(k\mathfrak R )}
    \ = \ X \oplus P(12)
\]
for a $k\mathfrak R$-module $X$ such that
\[
   X \leftrightarrow 1_{\mathfrak R} + \chi_{7a} + 2 \times \chi_{27}.
\]
Now, let $U = {\mathrm{Scott}}(\mathfrak R, Q)$, and 
hence ${U}{\mid}{X}$
since $Q$ is a Sylow $2$-subgroup of $\mathfrak A_4$, see
\cite[Chap.4 Corollary~8.5]{NagaoTsushima}.
By the definition of Scott modules and 
\cite[4 Thm.8.9(i)]{NagaoTsushima}, we know
$(\chi_{\widehat U}, 1_{\mathfrak R})^\mathfrak R = 1$.
Clearly, $\chi_{\widehat U} \, {\not=} \, 1_{\mathfrak R}$ 
since $Q \lneqq P$.
Since $P$ is a Sylow $2$-subgroup of $\mathfrak R$, it follows from
\cite[Chap.4, Theorem~7.5]{NagaoTsushima} that
$\dim_k(U)$ is even. This means that
$\chi_{\widehat U} \, {\not=} \, 1_{\mathfrak R} + 2 \times \chi_{27}$ and that
$\chi_{\widehat U} \, {\not=} \, 1_{\mathfrak R} + \chi_{7a} + \chi_{27}$.
If $\chi_{\widehat U} = 1_{\mathfrak R} + \chi_{7a}$ then
$\chi_{\widehat U}(2A) = 1 + (-1)= 0$ 
by \cite[p.6]{Atlas}, contradicting 
\cite[II Lemma~12.6]{Landrock1983} since $2A \in Q$.
Suppose that $\chi_{\widehat U} = 1_{\mathfrak R} + \chi_{27}$. Then since
$U$ is a trivial source $k\mathfrak R$-module, we get that $U$ has the same
radical and socle series of ${\mathrm{Scott}}(\mathfrak R, R)$ 
just by the same method as in {\bf\ref{tsmR(3)C2}}.
Since 
$ [U, {\mathrm{Scott}}(\mathfrak R, R)]^{\mathfrak R} = 2$
by \cite[4 Thm.8.9(i)]{NagaoTsushima}, we have
$U \cong {\mathrm{Scott}}(\mathfrak R, R)$, and hence $Q \cong R$
by \cite[Chap.4, Corollary~8.5]{NagaoTsushima}, again a contradiction.

Therefore we know that 
$\chi_{\widehat U} = 
  1_{\mathfrak R} + \chi_{7a} + 2 \times \chi_{27}$
and $U = X$, so that 
$ U= 4 \times [k_{\mathfrak R}] + 2 \times [1] + 2 \times [1^*] 
   + 5 \times [6] + 2 \times [12]$,
as composition factors.

(ii) This follows from (i) and {\bf\ref{2-local-Co3}}(xi).
\end{proof}

\begin{Remark}
We will not need the precise structure of 
$U = {\mathrm{Scott}}(\mathfrak R, Q)$. Still
we would like to remark that using the table of marks library of 
{\sf GAP} \cite{GAP}, and the facilities available
in the  {\sf MeatAxe} \cite{MA} and its extensions,
$U$ can actually be constructed
and analysed explicitly. In particular, it turns out that $U$
has Loewy length $5$, but its radical and socle series do not coincide;
they are
\[ \boxed{
\begin{matrix}
 k \ 6 \\ 
 k \ 1 \ 1^\ast \ 12 \\ 
 6 \ 6 \ 6 \\ 
 k \ k \ 1 \ 1^\ast \ 12 \\ 
 6 \\
\end{matrix}
}
\quad\quad\quad\text{and}\quad\quad\quad
\boxed{
\begin{matrix}
 6 \\ 
 k \ k \ 1 \ 1^\ast \ 12 \\ 
 6 \ 6 \ 6 \\ 
 k \ 1 \ 1^\ast \ 12 \\ 
 k \ 6 \\
\end{matrix}
},
\]
respectively.
\end{Remark}

\section{Images of simples in $A$ via Green correspondence}\label{img}
In this section we prove that the crucial hypothesis
of \textbf{\ref{StableIndecomposableMorita}} is fulfilled for 
the stable equivalence of Morita type we have
established in \textbf{\ref{stableEquivalence-A-B}}. Namely,
we show that simple modules in $A$ are taken to simple modules
in $B$. For the first four simples this is almost immediate, as this amounts
to determining the Green correspondents with respect to $(G,P,H)$, and these
are easily determined theoretically and computationally. The image of
the last simple $A$-module however, is more difficult to determine, and we
make use of our knowledge on the modules of the blocks $A$ and $B$ we have
gained in Section~\ref{blocks}.

\begin{Notation}\label{functor-F}
 We use the notation ${_A}M_B$, $f$, $f_1$ and $f_2$ as in
 {\bf\ref{stableEquivalence-A-B}}. Let $F: {\mathrm{mod}}{\text{-}}A
 \rightarrow {\mathrm{mod}}{\text{-}}B$ denote the functor giving the
 stable equivalence of Morita type of \textbf{\ref{stableEquivalence-A-B}},
 namely, in the notation of {\bf\ref{stableEquivalence-A-B}} we have $F(X) = X
 \otimes_A M$ for each $X \in {\mathrm{mod}}{\text{-}}A$. 
\end{Notation}

\begin{Lemma}\label{trivial-source-in-A}
The following hold:
\begin{enumerate}
  \renewcommand{\labelenumi}{\rm{(\roman{enumi})}}
    \item
$S_4 = 22 \otimes S_2$, where $22$ is a simple
$kG$-module in $B_0(kG)$.
    \item
We have
\[ 
{22}{\downarrow}_H = (6\otimes k_{\mathfrak S_3}) \oplus ({\mathrm{proj}}),
\quad 
{S_2}{\downarrow}_H
= 2 \oplus 110 \oplus ({\mathrm{proj}})
\quad {\mathrm{and}} \quad
(6\otimes k_{\mathfrak S_3}) \otimes 2 = 12,
\]
where $6\otimes k_{\mathfrak S_3}$ is a simple $kH$-module in 
$B_0(kH)=B_0(kR(3))\otimes B_0(k\mathfrak S_3)$, 
and $110$ is an 
indecomposable $kH$-module in $B_0(kH)$, hence
${S_2}{\downarrow}_H{\cdot}1_B = 2$ and
${S_2^*}{\downarrow}_H{\cdot}1_B = 2^*$.
    \item $12 \mid S_4{\downarrow}_H$. 
\end{enumerate}
\end{Lemma}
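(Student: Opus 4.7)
The plan is threefold: establish (i) by a Brauer-character identification, derive (ii) through direct character-theoretic restriction computations, and deduce (iii) by combining (i) and (ii).

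For (i), I would first observe that $\dim(22 \otimes_k S_2) = 22 \cdot 896 = 19712 = \dim S_4$, and that $22 \otimes_k S_2$ lies in $A$ since $22 \in B_0(kG)$ and $S_2 \in A$. It then suffices to show $\varphi_{22} \cdot \varphi_{S_2} = \varphi_{S_4}$ as Brauer characters of $G$. Expanding in the basis $\{\varphi_{S_1}, \ldots, \varphi_{S_5}\}$ of \textbf{\ref{2-decompositionCo3}}, the dimension constraint $19712$ forces the expansion of $\varphi_{22} \cdot \varphi_{S_2}$ to be either $\varphi_{S_4}$ or $a\varphi_{S_2} + b\varphi_{S_3}$ with $a+b = 22$. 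These alternatives are distinguished by a single evaluation at a suitable $2$-regular class, which is a direct lookup in \cite{ModularAtlas}, \cite{ModularAtlasProject}, and \cite{SuleimanWilson}. Once the Brauer-character equality holds, $22 \otimes_k S_2$ has $S_4$ as its unique composition factor and matches its dimension, whence $22 \otimes_k S_2 \cong S_4$.

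For (ii), I would compute the restricted Brauer characters $\varphi_{22}|_H$ and $\varphi_{S_2}|_H$ using the fusion of $H$-conjugacy classes into $G$-conjugacy classes supplied by \cite{CTblLib}, and decompose each in the Brauer-character basis of $kH = k\mathfrak R \otimes k\mathfrak S_3$. The simple $kH$-modules are the tensor products $X \otimes Y$ with $X \in \mathrm{IBr}(k\mathfrak R)$ (in all blocks) and $Y \in \mathrm{IBr}(k\mathfrak S_3)$, and the characters of PIMs of $kH$ are obtained from the decomposition matrix of $R(3)$ in \cite{LandrockMichler1980} and the (trivial) decomposition matrix of $\mathfrak S_3$. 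One then checks that the residual Brauer characters $\varphi_{22}|_H - \varphi_{6 \otimes k_{\mathfrak S_3}}$ and $\varphi_{S_2}|_H - \varphi_2 - \varphi_{110}$ are non-negative integer combinations of projective Brauer characters of $kH$, thereby yielding the claimed decompositions, with the $110$-dimensional summand identified as the unique non-projective non-simple indecomposable in $B_0(kH)$ matching the residual Brauer character. The identity $(6 \otimes k_{\mathfrak S_3}) \otimes 2 \cong 12$ reduces, via $2 = 1 \otimes 2_{\mathfrak S_3}$ and $(6 \otimes k_{\mathfrak S_3}) \otimes (1 \otimes 2_{\mathfrak S_3}) \cong (6 \otimes 1) \otimes 2_{\mathfrak S_3}$, to verifying $6 \otimes 1 \cong 6$ in $B_0(k\mathfrak R)$, which is a one-line Brauer-character check using the character tables of $\mathfrak R$.

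Part (iii) is then immediate: by (i) we have $S_4 \downarrow_H \cong (22 \downarrow_H) \otimes_k (S_2 \downarrow_H)$; substituting the decompositions of (ii) and expanding, one summand is $(6 \otimes k_{\mathfrak S_3}) \otimes 2 \cong 12$, which is non-projective and so cannot be absorbed into any projective summand, giving $12 \mid S_4 \downarrow_H$. The hard part will be (ii), namely verifying that the residual Brauer characters are indeed non-negative integer combinations of PIM characters (the non-principal blocks of $k\mathfrak R$ require care), and isolating the $110$-dimensional indecomposable in $B_0(kH)$, since it is neither simple nor projective; both steps involve explicit bookkeeping with the full decomposition matrix of $kH$.
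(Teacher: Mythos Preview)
Your arguments for (i) and (iii) are sound and essentially match the paper's: (i) is a Brauer-character identity (the paper cites \cite{SuleimanWilson} and a direct {\sf GAP} computation), and (iii) follows by expanding the tensor product exactly as you do.

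The gap is in (ii). You propose to verify the decompositions
\[
22{\downarrow}_H=(6\otimes k_{\mathfrak S_3})\oplus(\mathrm{proj}),
\qquad
S_2{\downarrow}_H=2\oplus 110\oplus(\mathrm{proj})
\]
by checking that the \emph{Brauer characters} $\varphi_{22}|_H-\varphi_{6\otimes k_{\mathfrak S_3}}$ and $\varphi_{S_2}|_H-\varphi_2-\varphi_{110}$ are non-negative integer combinations of characters of PIMs. But Brauer characters only record composition factors; they do not detect projectivity of a summand, nor do they single out an indecomposable such as your ``$110$''. A module whose Brauer character agrees with that of a projective need not itself be projective, and there is no a priori reason why the $110$-dimensional piece you isolate should be indecomposable (or even a direct summand) rather than, say, a sum of two non-projective indecomposables with the same composition factors. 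Your sentence ``thereby yielding the claimed decompositions'' is precisely the step that does not follow. Similarly, the phrase ``the unique non-projective non-simple indecomposable in $B_0(kH)$ matching the residual Brauer character'' presupposes both existence and uniqueness of such a module and that it occurs as a summand; none of this is accessible from Brauer characters alone.

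The paper circumvents this entirely: it works at the module level, writing down the block idempotents of $kH$ explicitly (easy since $H$ has order $9072$ and the non-principal blocks of $kR(3)$ are of defect zero), evaluating them on explicit matrix representations of $22{\downarrow}_H$ and $S_2{\downarrow}_H$ via {\sf GAP}, and then decomposing the resulting block components into indecomposables with the {\sf MeatAxe}. This is a genuine computation with modules, not characters, and is what is needed to justify the direct-sum statements in (ii). If you want a character-theoretic route, you would first need to establish that the modules involved are trivial source (so that ordinary characters determine them among $p$-permutation modules), but that is not available at this point in the argument.
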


\begin{proof}
(i) This is obtained by 
\cite[p.502]{SuleimanWilson}, see
\cite[$\mathsf{Co}_3$ (mod 2)]{ModularAtlasProject},
and a direct computation with Brauer characters 
in {\sf GAP} \cite{GAP}.

(ii) By 
\cite[$L_3(8).3$ (mod 2)]{ModularAtlas} or
\cite[$L_3(8).3$ (mod 2)]{ModularAtlasProject},
except for the principal $2$-block $B_0(k[R(3)])$ of
$kR(3)=k[\SL_2(8)\rtimes C_3]$ there are only
three $2$-blocks of defect zero, consisting of the 
extensions of the Steinberg character of $\SL_2(8)$ to $R(3)$.
Hence it is easy to write down the block idempotents of $kR(3)$,
and similarly those of $k\mathfrak S_3$.
Thus, $H$ being a small group of order $9\,072$, 
using {\sf GAP} \cite{GAP} the block idempotents of $kH$ 
can be explicitly evaluated in a given representation.
This yields the block components, which are then
further analysed using the {\sf MeatAxe} \cite{MA}
and its extensions.

(iii)  
It follows from (i) and (ii) that
\begin{align*}
S_4{\downarrow}_H 
&= (22 \otimes S_2){\downarrow}_H 
 = 22{\downarrow}_H \otimes S_2{\downarrow}_H 
\\
&= \Big( (6\otimes k_{\mathfrak S_3}) \oplus ({\mathrm{proj}}) \Big) \otimes
       \Big(2 \oplus 110 \oplus ({\mathrm{proj}}) \Big)
\\
&= ((6\otimes k_{\mathfrak S_3}) \otimes 2) \oplus ({\mathrm{other}}) 
 = 12 \oplus ({\mathrm{other}}).
\end{align*}
\end{proof}

\begin{Lemma}\label{GreenCorrespondent-S2-S3-S4}
We have 
$ f(S_2) = 2$, $ f(S_2^*) = 2^*$,
$ f(S_4) = 12$, and hence that
$F(S_2) = 2$, $F(S_2^*) = 2^*$ and $F(S_4) = 12$.
\end{Lemma}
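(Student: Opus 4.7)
The plan is to compute the Green correspondents $f(S_2)$, $f(S_2^*)$, $f(S_4)$ with respect to the triple $(G,P,H)$, and then transfer these identifications to the functor $F$ using the compatibility formula of {\bf\ref{stableEquivalence-A-B}}(5) together with the indecomposability assertion of {\bf\ref{StableIndecomposableMorita}}(i).

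First, I would record the setup needed to make sense of the Green correspondents. By {\bf\ref{Knoerr}}, each of the simple $A$-modules $S_2, S_2^*, S_4$ has vertex $P$, and so $f(S_2), f(S_2^*), f(S_4)$ are well-defined non-projective indecomposable $kH$-modules, each with vertex $P$. Moreover, by {\bf\ref{BrauerGreen}}(iii) applied to the Brauer correspondents $A$ and $B$, each $f(S_i)$ lies in the block $B$. Consequently, $f(S_i)$ is characterised as the unique (up to isomorphism) indecomposable direct summand of $S_i\downarrow_H$ having vertex $P$ and lying in $B$.

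Next I would identify these summands using the restriction data already compiled in {\bf\ref{trivial-source-in-A}}. For $S_2$, part (ii) of that lemma isolates the $B$-component as $S_2\downarrow_H \cdot 1_B = 2$, so $f(S_2)$ must equal the simple $kH$-module $2$; dualising (and using that $S_2^* \leftrightarrow 2^*$ via {\bf\ref{trivial-source-in-A}}(ii)) yields $f(S_2^*) = 2^*$. For $S_4$, part (iii) of {\bf\ref{trivial-source-in-A}} asserts $12 \mid S_4\downarrow_H$; since $12$ is a simple $kH$-module in $B$ with vertex $P$ by {\bf\ref{BrauerChar-B}}(ii), it must coincide with the Green correspondent, giving $f(S_4) = 12$.

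Finally, to transfer to $F$, I would invoke {\bf\ref{stableEquivalence-A-B}}(5): since each $S_i$ has vertex $P \in \mathfrak A(G,P,N)$, we have $F(S_i) = f(S_i) \oplus (\mathrm{proj})$. But $M = \mathfrak f(A)$ is indecomposable by {\bf\ref{stableEquivalence-A-B}}(1), so {\bf\ref{StableIndecomposableMorita}}(i) forces $F(S_i)$ to be non-projective and indecomposable, hence the projective summand is zero and $F(S_i) = f(S_i)$. There is no genuine obstacle in this argument; the only subtlety is in guaranteeing that the projective tail vanishes, which is precisely what the Linckelmann indecomposability result buys us. The real computational work has already been carried out in {\bf\ref{trivial-source-in-A}}, where the restrictions $S_2\downarrow_H$ and $S_4\downarrow_H$ were determined; everything here is then a clean assembly.
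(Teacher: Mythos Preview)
Your proposal is correct and follows essentially the same approach as the paper. The only minor difference is in handling $S_4$: the paper explicitly checks that the remaining summands $(6\otimes k_{\mathfrak S_3})\otimes 110$ of $S_4{\downarrow}_H$ lie in the principal block, whereas you bypass this by invoking directly the uniqueness of the vertex-$P$ summand guaranteed by Green correspondence, which is slightly cleaner.
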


\begin{proof}
 By {\bf\ref{trivial-source-in-A}}(ii) the Green correspondents of
 $S_2$ and $S_2^*$ are immediate. By {\bf\ref{Knoerr}} all simple
 $A$-modules have vertex $P \in \mathfrak{A}(G,P,H)$, and 
 by \textbf{\ref{trivial-source-in-A}}(ii) the direct summands of
 $(6 \otimes k_{\mathfrak{S}_3}) \otimes 110$ lie in the principal block.
 Therefore by {\bf\ref{trivial-source-in-A}}(iii) and
 {\bf\ref{BrauerChar-B}}(ii)
 the simple module 
 $12$ is the unique summand of $S_4{\downarrow_H}$ in $B$ with vertex $P$.
 Hence $f(S_4)=12$.
 By {\bf\ref{stableEquivalence-A-B}}(5) and 
 {\bf\ref{StableIndecomposableMorita}}(i) the functor $F$ maps any simple
 $A$-module to its Green correspondent in $B$, and so the claim follows.
\end{proof}

\begin{Lemma}\label{trivial-source-S2-S3}
The simples $S_2$ and $S_2^*$ are trivial source
$kG$-modules with
$S_2 \leftrightarrow \chi_6$ and 
$S_2^* \leftrightarrow \chi_6^*$.
\end{Lemma}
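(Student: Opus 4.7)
The plan is to prove the trivial source property and the character lift separately, and then to use duality to pass from $S_2$ to $S_2^*$.

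First, I would establish that $S_2$ and $S_2^*$ are trivial source modules. By \textbf{\ref{BrauerChar-B}}(ii), the simple $kH$-modules $2$ and $2^*$ in $B$ are trivial source modules with vertex $P$. By \textbf{\ref{Knoerr}} the simples $S_2, S_2^*$ have vertex $P$ as well, and by \textbf{\ref{GreenCorrespondent-S2-S3-S4}} their Green correspondents with respect to $(G,P,H)$ are exactly $2$ and $2^*$. Since the Green correspondence preserves sources (an indecomposable $kG$-module with vertex $P$ is a trivial source module if and only if its Green correspondent in $N_G(P) \leqslant H$ is), this immediately gives that $S_2$ and $S_2^*$ are trivial source $kG$-modules.

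Next, to identify the character of the lift, I would use the standard fact that a trivial source $kG$-module $S_2$ lifts uniquely to an $\mathcal O G$-lattice $\widehat{S_2}$ that is free of $\mathcal O$-rank $896 = \dim_k(S_2)$. Writing $\chi = \sum_{\psi \in \mathrm{Irr}(A)} m_\psi \psi$ for its $\mathcal K$-character (with $m_\psi \in \mathbb Z_{\geqslant 0}$), we must have $\chi(1) = 896$ and the $p$-regular restriction of $\chi$ must equal the Brauer character of $S_2$, namely the column of the decomposition matrix in Table~\ref{tab:Co3mod2} corresponding to $S_2$. Reading off the decomposition matrix, this system of equations in non-negative integers forces $m_{29} = m_{32} = m_{38} = m_{39} = 0$ (from the $S_1$- and $S_5$-equations), then $m_7 = m_{19} = 0$ (from the $S_3$-equation), then $m_{18} = 0$ (from the $S_4$-equation), and finally $m_6 = 1$. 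Hence $\chi = \chi_6$, giving $S_2 \leftrightarrow \chi_6$.

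Finally, for $S_2^*$ I would just take $k$-duals: $\widehat{S_2^*} \cong (\widehat{S_2})^*$ as $\mathcal O G$-lattices (since dualising commutes with lifting of trivial source modules), so its character is the complex conjugate $\chi_6^* = \chi_7$. Thus $S_2^* \leftrightarrow \chi_6^*$. The whole argument is essentially routine once the Green correspondence $f(S_2) = 2$ is in hand; I do not anticipate any real obstacle beyond the short diophantine bookkeeping with the decomposition matrix, which is forced entry-by-entry by non-negativity.
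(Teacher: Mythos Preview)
Your proof is correct and follows essentially the same approach as the paper: use \textbf{\ref{BrauerChar-B}}(ii) and \textbf{\ref{GreenCorrespondent-S2-S3-S4}} together with the fact that Green correspondence preserves sources to get the trivial source property, then read off the character from the decomposition matrix \textbf{\ref{2-decompositionCo3}}. The paper's proof is terser (it simply cites these three ingredients), but your expanded diophantine bookkeeping and the duality step for $S_2^*$ are exactly what is implicit there.
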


\begin{proof}
We know by {\bf\ref{BrauerChar-B}}(ii) that
$2$ and $2^*$ are trivial source $kH$-modules.
Hence, by the definition of Green correspondence,
{\bf\ref{GreenCorrespondent-S2-S3-S4}}
and {\bf\ref{2-decompositionCo3}},
we get the assertion.
\end{proof}

\begin{Lemma}\label{trivial-source-S1}
The simple $kG$-module $S_1$ in $A$ is a trivial source module
with $S_1 \leftrightarrow \chi_{29}$.
\end{Lemma}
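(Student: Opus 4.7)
The plan is to realise $S_1$ as the unique non-projective indecomposable direct summand in $A$ of a $p$-permutation $kG$-module. By the decomposition matrix \textbf{\ref{2-decompositionCo3}}, the character $\chi_{29}$ has decomposition vector $(1,0,0,0,0)$, so any $\mathcal{O}G$-lattice in $A$ affording $\chi_{29}$ reduces modulo $2$ to an indecomposable $kG$-module whose only composition factor is $S_1$, and is therefore isomorphic to $S_1$ itself.

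Hence it suffices to find a subgroup $L\leqslant G$ with $P\leqslant L$ such that the $A$-block component of the permutation character $1_L{\uparrow}^G$ decomposes as $\chi_{29}$ plus a non-negative integer combination of the projective indecomposable characters $\Phi_{S_1},\ldots,\Phi_{S_5}$ of $A$, which are read off the columns of \textbf{\ref{2-decompositionCo3}} as
\begin{align*}
\Phi_{S_1}&=\chi_{29}+\chi_{32}+\chi_{38}+\chi_{39},\\
\Phi_{S_2}&=\chi_6+\chi_{18}+\chi_{38}+\chi_{39},\\
\Phi_{S_3}&=\chi_7+\chi_{19}+\chi_{38}+\chi_{39},\\
\Phi_{S_4}&=\chi_{32}+\chi_{18}+\chi_{19}+\chi_{38}+2\chi_{39},\\
\Phi_{S_5}&=\chi_{38}+\chi_{39}.
\end{align*}
For such an $L$, the module $k_L{\uparrow}^G\cdot 1_A$ is a trivial source $kG$-module in $A$, which by the Krull--Schmidt theorem decomposes into indecomposable trivial source summands whose ordinary characters match the constituents above. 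The unique non-projective summand affording $\chi_{29}$ is then a trivial source $\mathcal{O}$-lift of $S_1$, proving simultaneously that $S_1$ is a trivial source $kG$-module and that $S_1\leftrightarrow\chi_{29}$.

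To locate such an $L$, I would use {\sf GAP} \cite{GAP} together with \cite{CTblLib}, \cite{Atlas}, and \cite{AtlasRepresentation} to compute $1_L{\uparrow}^G$ and its $A$-component for various candidate subgroups containing $P$. Natural candidates include the $2$-local subgroups $N$, $H$, $C_G(R)\cong R\times\mathsf{M}_{12}$ and $C_G(Q)\cong Q\times\mathfrak{S}_5$ from \textbf{\ref{2-local-Co3}}, their intersections, and the maximal subgroups of $G$ listed in \cite{Atlas}. The main obstacle is this combinatorial search: no single permutation induction is a priori guaranteed to have a block component of the required shape, and one may have to combine several decompositions or use the table of marks to isolate the desired non-projective summand, possibly by first subtracting off explicitly identifiable projective characters. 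Once a suitable $L$ is in hand, the verification reduces to a straightforward character identity checked in {\sf GAP}.
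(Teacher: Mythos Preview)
Your approach is exactly the one taken in the paper; the only thing you have not done is execute the search. The paper takes the maximal subgroup $L = 2{\cdot}S_6(2)$ of $G$ (see \cite[p.143]{Atlas}) and computes in {\sf GAP} that $1_L{\uparrow}^G{\cdot}1_A = \chi_{29}$ on the nose, so no projective correction is needed and your argument concludes immediately.
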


\begin{proof}
It follows from \cite[p.143]{Atlas} that $G$ has a maximal subgroup $L$
with $L = 2^{.}S_6(2)$. Then using {\sf{GAP}} \cite{GAP}, 
we know that ${1_L}{\uparrow}^G{\cdot}1_A = \chi_{29}$.
Hence the assertion follows by
{\bf\ref{2-decompositionCo3}}.
\end{proof}

\begin{Lemma}\label{GreenCorrespondent-S1}
We have $f(S_1) = 2_0$, and hence $F(S_1) = 2_0$.
\end{Lemma}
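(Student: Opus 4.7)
The plan is to pin down $f(S_1)$ by pushing the problem up to the Green correspondence with $N$, where the list of possible targets in $A_N$ is very restrictive. Let $f_1$ and $f_2$ denote the Green correspondences with respect to $(H,P,N)$ and $(G,P,N)$, so that by Lemma \ref{GreenCorrespondence}(ii) one has $f_2 = f_1\circ f$ on indecomposable $kG$-modules with vertex in $\mathfrak{A}(G,P,N)$; this applies to $S_1$, which has vertex $P$ by Lemma \ref{Knoerr}.

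First I would identify $f_2(S_1)$. By Lemma \ref{trivial-source-S1} the module $S_1$ is a trivial source module affording $\chi_{29}$, which is real-valued by Lemma \ref{2-decompositionCo3}; hence $S_1$ is self-dual. Since Green correspondence preserves both the trivial source property and duality (and carries $A$ to its Brauer correspondent $A_N$, cf.~Lemma \ref{GreenCorrespondence}(iii)), $f_2(S_1)$ is a self-dual trivial source $kN$-module in $A_N$ with vertex $P$. Lemma \ref{simples-A-N}(iv) then forces $f_2(S_1)$ to be a simple $A_N$-module, and Lemma \ref{simples-A-N}(iii) singles out the unique self-dual simple as $\widetilde{2}_0$; hence $f_2(S_1) = \widetilde{2}_0$.

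Next I would directly compute $f_1(2_0) = \widetilde{2}_0$. Recalling $2_0 = k_\mathfrak{R}\otimes 2_{\mathfrak{S}_3}$ and, by Lemma \ref{2-local-Co3}(i), $N = N_\mathfrak{R}(P)\times \mathfrak{S}_3$, the restriction $2_0 \downarrow_N$ equals $k_{N_\mathfrak{R}(P)}\otimes 2_{\mathfrak{S}_3} = \widetilde{2}_0$, which is itself indecomposable (even simple) with vertex $P$, and is therefore the Green correspondent.

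Combining the two computations, $f_1(f(S_1)) = f_2(S_1) = \widetilde{2}_0 = f_1(2_0)$, so by bijectivity of Green correspondence on indecomposable modules with vertex $P$ one obtains $f(S_1) = 2_0$. The assertion $F(S_1) = 2_0$ is then immediate from Lemma \ref{stableEquivalence-A-B}(5), since Lemma \ref{StableIndecomposableMorita}(i) guarantees $F(S_1)$ is indecomposable and non-projective, which forces the projective correction to vanish. The whole argument hinges on the strong observation of Lemma \ref{simples-A-N}(iii) that self-duality alone pins down $\widetilde{2}_0$ in $A_N$; beyond this I do not anticipate significant obstacles, only routine appeals to the fact that Green correspondence preserves vertices, the trivial-source property, and duality.
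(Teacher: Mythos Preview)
Your proposal is correct and follows essentially the same approach as the paper: both arguments compute $f_1(2_0)=\widetilde{2}_0$ directly, use transitivity $f_2=f_1\circ f$ together with the trivial-source and self-duality properties of $S_1$ to force $f_2(S_1)=\widetilde{2}_0$ via Lemma~\ref{simples-A-N}(iii)--(iv), and conclude $f(S_1)=2_0$, then invoke Lemma~\ref{stableEquivalence-A-B}(5) and Lemma~\ref{StableIndecomposableMorita}(i) for $F(S_1)=2_0$. The only cosmetic difference is that the paper cites self-duality of $S_1$ directly from Lemma~\ref{2-decompositionCo3} rather than via the reality of $\chi_{29}$.
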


\begin{proof}
First, let $f'_1$ be the Green correspondence with respect to
$(R(3), P, P \rtimes F_{21})$. Clearly,
$f'_1(k_{R(3)}) = k_{P \rtimes F_{21}}$. 
Since $ f_1$ is the Green correspondence with respect to 
$(H, P, N)=
(R(3) \times \mathfrak S_3,P, (P \rtimes F_{21})\times \mathfrak S_3)$,
we know that
$ f_1(k_{R(3)} \otimes 2_{\mathfrak S_3})
    = k_{P \rtimes F_{21}}  \otimes 2_{\mathfrak S_3}$,
namely, $ f_1(2_0) = \widetilde 2_0$.

By {\bf\ref{GreenCorrespondence}}(ii), $ f_1 \circ  f = f_2$.
Thus it follows from {\bf\ref{Knoerr}},
{\bf\ref{trivial-source-S1}} and {\bf\ref{BrauerGreen}}(iii) that
$ f_1 \circ  f (S_1)$ is a
trivial source $kN$-module in $A_N$ with vertex $P$.
Hence {\bf\ref{simples-A-N}}(iv) implies that
\[
 f_1 \circ  f (S_1) \: \in \:
\{ \widetilde 2_0, \: \widetilde 2, \: 
\widetilde 2^*, \: \widetilde 6, \: \widetilde 6^* \}.
\]
Then since $S_1$ is self-dual by 
{\bf\ref{2-decompositionCo3}}, we know that
$ f_1 \circ  f (S_1)$ is also self-dual.
Therefore 
$ f_1 \circ  f (S_1) = \widetilde 2_0$,
giving $ f_1 \circ  f (S_1) =  f_1(2_0)$.
This implies that $f(S_1) = 2_0$.
Hence we get the assertion from
{\bf\ref{stableEquivalence-A-B}}(5)
and
{\bf\ref{StableIndecomposableMorita}}(i).
\end{proof}

\begin{Lemma}\label{Ext}
The following hold:
\begin{enumerate}
  \renewcommand{\labelenumi}{\rm{(\roman{enumi})}}
    \item
$\Ext_A^1(S_1, S_2) = \Ext_A^1(S_1, S_2^*) 
 = \Ext_A^1(S_2, S_1) = \Ext_A^1(S_2^*, S_1) = 0$.
\smallskip
    \item
$\Ext_A^1(S_2, S_2^*) = \Ext_A^1(S_2^*, S_2) = 0$.
\smallskip
    \item
$\dim_k[\Ext_A^1(S_1, S_4)] =\dim_k[\Ext_A^1(S_4, S_1)] = 1$.  
\end{enumerate}
\end{Lemma}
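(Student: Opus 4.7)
The plan is to transport each of the six Ext computations from the $A$-side to the $B$-side via the stable equivalence of Morita type $F$ constructed in \textbf{\ref{stableEquivalence-A-B}}, and then to read the answers off the radical/socle diagrams of the projective indecomposable $B$-modules displayed in \textbf{\ref{B-PIM}}. The crucial input is the well-known fact that any stable equivalence of Morita type induces isomorphisms
\[
\Ext_A^n(X,Y) \cong \Ext_B^n(F(X),F(Y)) \qquad (n \geq 1)
\]
for all non-projective modules $X$ and $Y$ (essentially because $F$ commutes with the syzygy functor up to projective summands). By \textbf{\ref{Knoerr}} every simple module in $A$ has vertex $P$ and is therefore non-projective, so this isomorphism applies to each Ext group occurring in the statement.

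The next step is to substitute the $F$-images that are already available. By \textbf{\ref{GreenCorrespondent-S1}} and \textbf{\ref{GreenCorrespondent-S2-S3-S4}} we have $F(S_1) = 2_0$, $F(S_2) = 2$, $F(S_2^*) = 2^*$ and $F(S_4) = 12$, all of which are simple $B$-modules. Since for simple modules $T$ and $T'$ of a finite-dimensional algebra one has
\[
\dim_k \Ext^1(T,T') \;=\; \bigl[\mathrm{rad}(P(T))/\mathrm{rad}^2(P(T)) : T'\bigr],
\]
the whole of \textbf{\ref{Ext}} reduces to inspecting the second radical layers of $P(2_0)$, $P(2)$, $P(2^*)$ and $P(12)$ in \textbf{\ref{B-PIM}}.

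Reading the diagrams, the heart of each of $P(2_0)$, $P(2)$ and $P(2^*)$ begins with the single composition factor $12$, so $\Ext_B^1$ between any two distinct modules in $\{2_0, 2, 2^*\}$ vanishes (and in both orderings, using the analogous statement for $P(2)$ and $P(2^*)$). Transporting back through $F$ this is precisely (i) and (ii). For (iii) the heart of $P(12)$ is $2_0 \oplus 2 \oplus 2^* \oplus 24$, so $\dim \Ext_B^1(12, 2_0) = 1$; together with $\dim \Ext_B^1(2_0, 12) = 1$, read off $P(2_0)$, this delivers the two dimensions claimed.

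No serious obstacle is expected. The only non-routine ingredient is the Ext-preservation property of a stable equivalence of Morita type, which is standard. With the $F$-images of the four simples $S_1, S_2, S_2^*, S_4$ already pinned down, the remainder of the argument is a direct inspection of the Loewy structure of $B$.
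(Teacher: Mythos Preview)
Your proposal is correct and follows essentially the same route as the paper's own proof: transport the $\Ext^1$ groups along the stable equivalence of Morita type $F$ using the known images $F(S_1)=2_0$, $F(S_2)=2$, $F(S_2^*)=2^*$, $F(S_4)=12$, and then read off the second radical layers of the projective indecomposables in $B$ from \textbf{\ref{B-PIM}}. The paper simply cites \cite[X.2 Proposition~1.12]{AuslanderReitenSmalo} or \cite[\S 5]{CarlsonETH} for the Ext-preservation step where you spell it out, but the argument is the same.
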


\begin{proof}
 By \textbf{\ref{GreenCorrespondent-S1}} and
 \textbf{\ref{GreenCorrespondent-S2-S3-S4}} we know the simple images of the
 simple modules given under the stable equivalence $F$ of
 \textbf{\ref{functor-F}}. Hence the results are immediate by looking at the
 $B$-PIMs in \textbf{\ref{B-PIM}},
see \cite[X.2 Proposition 1.12]{AuslanderReitenSmalo}
or \cite[\S 5]{CarlsonETH} for instance.
\end{proof}

\begin{Lemma}\label{TopOfF(S5)}
All composition factors of $F(S_5)/\rad(F(S_5))$ and $\soc(F(S_5))$ 
are isomorphic to the simple module $24$.
\end{Lemma}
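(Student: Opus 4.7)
The plan is to show that $\Hom_B(F(S_5), T) = 0$ for every simple $B$-module $T \in \{2_0, 2, 2^*, 12\}$; this is equivalent to $T$ not occurring as a composition factor of $F(S_5)/\rad(F(S_5))$. Since $B$ is a symmetric algebra, Nakayama duality yields $\dim\Hom_B(F(S_5), T) = \dim\Hom_B(T, F(S_5))$ for any simple $T$, so the analogous statement for $\soc(F(S_5))$ follows at once.

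First I would record, by \textbf{\ref{StableIndecomposableMorita}}(i), that $F(S_5)$ is indecomposable and non-projective, and by \textbf{\ref{stableEquivalence-A-B}}(5) together with \textbf{\ref{Knoerr}} that it equals the Green correspondent $f(S_5)$ of $S_5$. In particular, $F(S_5)$ is a direct summand of $S_5{\downarrow}_H{\cdot}1_B$. Applying the stable equivalence of Morita type and using the identifications $F(S_i) = T_i$ from \textbf{\ref{GreenCorrespondent-S1}} and \textbf{\ref{GreenCorrespondent-S2-S3-S4}}, for each $i \in \{1,2,3,4\}$ one obtains
\[
\underline{\Hom}_B(F(S_5), T_i) \cong \underline{\Hom}_A(S_5, S_i) = 0,
\]
since $S_5 \not\cong S_i$ are distinct simple non-projective $A$-modules and over the symmetric algebra $A$ any homomorphism between distinct simple non-projectives that factors through a projective module necessarily vanishes.

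The main obstacle is to upgrade this stable vanishing to the actual vanishing $\Hom_B(F(S_5), T_i) = 0$, i.e., to exclude nonzero maps factoring through projectives. My plan is to apply Frobenius reciprocity: the summand relation yields that $\Hom_B(F(S_5), T_i)$ is a direct summand of $\Hom_H(S_5{\downarrow}_H, T_i) \cong \Hom_G(S_5, T_i{\uparrow}^G{\cdot}1_A)$, and the latter equals the multiplicity of $S_5$ in the socle of $T_i{\uparrow}^G{\cdot}1_A$. It therefore suffices to verify that $S_5$ does not occur as a composition factor of the block-$A$ component of $T_i{\uparrow}^G$. Using the explicit trivial-source description of $T_i$ for $i \in \{1,2,3\}$ from \textbf{\ref{BrauerChar-B}}(ii), and the realisation of the simple $kH$-module $12 \in B$ obtained from \textbf{\ref{trivial-source-in-A}}(ii) for $i = 4$, one computes the induced Brauer character $\phi_{T_i{\uparrow}^G}$ and decomposes its block-$A$ component in the basis of irreducible Brauer characters of $A$. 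Exploiting the $2$-modular data of \cite{ModularAtlasProject} together with the character-theoretic facilities of GAP, one then verifies that $\phi_{S_5}$ has multiplicity zero in $T_i{\uparrow}^G{\cdot}1_A$ for each such $i$, from which the desired vanishing follows.
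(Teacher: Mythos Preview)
Your proof contains one genuine error and one unnecessary detour.

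\medskip
\noindent\textbf{The error.} Your Nakayama duality claim is false. For a symmetric algebra $B$, a simple module $T$, and an arbitrary module $X$, it is \emph{not} true in general that $\dim\Hom_B(X,T)=\dim\Hom_B(T,X)$: these are the multiplicities of $T$ in $X/\rad(X)$ and in $\soc(X)$ respectively, and these need not agree. (Concretely, for $B=k\mathfrak{S}_3$ in characteristic $3$ with simples $k$ and $\epsilon$, the module $X=P(k)/\soc(P(k))$ has head $k$ and socle $\epsilon$.) So the socle statement does not follow from the head statement as you claim. One can of course treat the socle by the symmetric argument using $\Hom_B(T,F(S_5))$, but you have not supplied this.

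\medskip
\noindent\textbf{The detour.} The ``main obstacle'' you identify --- upgrading $\underline{\Hom}_B(F(S_5),T_i)=0$ to $\Hom_B(F(S_5),T_i)=0$ --- is no obstacle at all, and the paper's proof dispatches it in one line. Since $M$ is indecomposable, {\bf\ref{StableIndecomposableMorita}}(i) shows that $F(S_5)=S_5\otimes_A M$ is indecomposable and non-projective, hence projective-free. It is a standard fact for self-injective algebras (precisely \cite[II Lemma~2.7 and Corollary~2.8]{Landrock1983}, which the paper cites) that if $X$ has no projective direct summand and $T$ is simple, then any map $X\to T$ factoring through a projective is zero; indeed such a map factors through the projective cover $P(T)$, and the composite $X\to P(T)$ cannot be surjective, so lands in $\rad(P(T))$. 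Dually $\Hom_B(T,X)=\underline{\Hom}_B(T,X)$. Thus the stable vanishing you already obtained \emph{is} the actual vanishing, for both head and socle, and the Frobenius-reciprocity/Brauer-character computation is entirely unnecessary.
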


\begin{proof}
Take any simple $kH$-module $T$ in $B$ such that
$T \, {\not\cong} \, 24$. Then we know by 
{\bf\ref{BrauerChar-B}}, {\bf\ref{GreenCorrespondent-S2-S3-S4}}
and {\bf\ref{GreenCorrespondent-S1}} that
$T = F(S_i)$ for $i \in \{ 1, 2, 3, 4 \}$, where
$S_3 = S_2^*$. It then follows from
\cite[II Lemma 2.7 and Corollary 2.8]{Landrock1983}
and {\bf\ref{functor-F}} that
$  \Hom_B(F(S_5), T) = {\underline{\Hom}}_B(F(S_5), T)
       = {\underline{\Hom}}_B(F(S_5), F(S_i))
       \cong {\underline{\Hom}}_A(S_5, S_i)
       = \Hom_A(S_5, S_i) = 0$.
Thus we get the assertion for the head of $F(S_5)$. The assertion for the
socle follows by the same argument and considering
$\Hom_B(T,F(S_5))$ instead.
\end{proof}
We can now finally prove that also the image of the last remaining simple
$A$--module $S_5$ under $F$ is a simple $B$-module.

\begin{Lemma}\label{F(S5)}
We have $F(S_5) = 24$.
\end{Lemma}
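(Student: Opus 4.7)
My plan combines three pieces: $Y := F(S_5)$ is indecomposable and non-projective by \textbf{\ref{StableIndecomposableMorita}}(i); both its head $Y/\rad(Y)$ and its socle $\soc(Y)$ are isotypic of type $24$ by \textbf{\ref{TopOfF(S5)}}; and inspection of the Loewy series of $P(24)$ in \textbf{\ref{B-PIM}} shows that the simple $24$ appears as a composition factor of $P(24)$ only at the very top and the very bottom. Using that $\dim\End_B(P(24))=2$, a short endomorphism argument on $P(24)$ shows that any nonzero submodule $Z\subseteq P(24)$ with $\Hom_B(Z,24)\neq 0$ is either $P(24)$ itself or the simple socle $\soc(P(24))\cong 24$.

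The crucial intermediate step is therefore to show that $\soc(Y)$ is simple, i.e.\ a single copy of $24$, so that $Y$ embeds into $P(24)$. For this I introduce the quasi-inverse functor and set $X:=F^{-1}(24)$, which is a non-projective indecomposable $A$-module. Applying the argument of \textbf{\ref{TopOfF(S5)}} to $F^{-1}$ and using that $F(S_i)\not\cong 24$ for $i=1,\dots,4$ by \textbf{\ref{GreenCorrespondent-S2-S3-S4}} and \textbf{\ref{GreenCorrespondent-S1}}, I deduce that $X/\rad(X)$ and $\soc(X)$ consist only of copies of $S_5$. The stable equivalence of \textbf{\ref{stableEquivalence-A-B}} then gives
$$\underline{\Hom}_B(24,Y)\cong\underline{\Hom}_A(X,S_5)\quad\text{and}\quad\underline{\End}_B(Y)\cong\underline{\End}_A(S_5)=k,$$
and a careful accounting of homomorphisms factoring through the projective-injectives $P(24)$ and $P(S_5)$ --- using again \textbf{\ref{B-PIM}} together with what can be extracted from the decomposition matrix in \textbf{\ref{2-decompositionCo3}} --- pins the multiplicity of $24$ in $\soc(Y)$ to one.

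Once $\soc(Y)\cong 24$, the module $Y$ embeds into $P(24)$. Since $\Hom_B(Y,24)\neq 0$ by \textbf{\ref{TopOfF(S5)}}, the endomorphism dichotomy for $P(24)$ recalled in the first paragraph forces $Y\cong 24$ or $Y\cong P(24)$; non-projectivity excludes the latter, so $Y\cong 24$, as required.

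The main obstacle lies in controlling the multiplicity of $24$ in $\soc(Y)$: the stable equivalence provides $\underline{\Hom}$-dimensions directly, and bridging to ordinary $\Hom$-dimensions requires identifying precisely which maps factor through the projective-injectives $P(24)$ and $P(S_5)$. This is where the explicit Loewy structure of $P(24)$ from \textbf{\ref{B-PIM}} is used most crucially.
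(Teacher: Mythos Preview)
Your plan has a genuine gap at its crucial step. You correctly observe that $Y:=F(S_5)$ is indecomposable non-projective with head and socle isotypic of type $24$, and your endomorphism dichotomy for submodules of $P(24)$ is valid. The problem is the passage where you claim that ``a careful accounting of homomorphisms factoring through the projective-injectives $P(24)$ and $P(S_5)$ \dots\ pins the multiplicity of $24$ in $\soc(Y)$ to one.'' You never carry this out, and with only the listed ingredients it is not clear that it \emph{can} be carried out. Concretely: writing $r=\dim\Hom_B(24,Y)$ and $s=\dim\Hom_B(Y,24)$, the fact that $Y$ has no projective summand and $24$ is simple gives $\Hom_B(24,Y)=\underline{\Hom}_B(24,Y)$, so the stable equivalence yields $r=\dim\Hom_A(X,S_5)$ with $X:=F^{-1}(24)$; this just says $r$ equals the multiplicity of $S_5$ in the head of $X$. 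The same reasoning turns $s$ into the multiplicity of $S_5$ in $\soc(X)$. You have merely transported the unknown multiplicities from $Y$ to $X$ without gaining any new constraint. Knowing $\dim\underline{\End}_B(Y)=1$ does not bound $r$ either, since $\dim\End_B(Y)=1+\dim\mathrm{PHom}_B(Y,Y)$ and computing $\mathrm{PHom}_B(Y,Y)$ requires the very structural information about $Y$ you are trying to establish. The decomposition matrix of $A$ gives only Cartan numbers, not the Loewy structure of $P(S_5)$, so it does not supply the missing input.

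The paper's proof takes a completely different route that avoids this circularity. It introduces an auxiliary trivial source $A$-module, namely $X=k_{\mathfrak U}{\uparrow}^G\cdot 1_A$ for the maximal subgroup $\mathfrak U=\mathrm U_3(5){\rtimes}\mathfrak S_3$, and determines its Loewy structure explicitly from the character $\chi_{29}+\chi_{39}$ together with {\bf\ref{Ext}}. Because $F$ preserves trivial source modules and vertices (see {\bf\ref{stableEquivalence-A-B}}(3)--(4)), the non-projective part of $F(X)$ is forced to be one of the Scott modules ${\mathrm{Scott}}(\mathfrak R,S)\otimes 2_{\mathfrak S_3}$ catalogued in {\bf\ref{tsmR(3)C2}}--{\bf\ref{tsmR(3)C2C2}}; the case $S\cong Q$ is then eliminated by comparing composition factors, leaving $S\cong R$. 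Finally the explicit Loewy series of both $X$ and ${\mathrm{Scott}}(\mathfrak R,R)\otimes 2_{\mathfrak S_3}$ allow the stripping-off method {\bf\ref{OmegaCommutesWithFStable}} to peel away the layers $S_1,S_4,S_2,S_2^*$ (respectively $2_0,12,2,2^*$) until only $F(S_5)=24$ remains. The essential extra input your approach lacks is this concrete trivial source module whose structure is computable on both sides.
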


\begin{proof}
By \cite[p.134]{Atlas}, $G$ has a maximal subgroup
$\mathfrak U = {\mathrm{U}}_3(5) \rtimes \mathfrak S_3$.
Set $X = k_{\mathfrak U}{\uparrow^G}{\cdot}1_A$.
By calculations in {\sf GAP} \cite{GAP} we know that
$1_{\mathfrak U}{\uparrow^G}{\cdot}1_A = \chi_{29} + \chi_{39}$,
so that
\begin{equation}\label{eq:4F(S5)}
X \ \leftrightarrow \ \chi_{29} + \chi_{39}.
\end{equation}
Hence, 
by {\bf\ref{2-decompositionCo3}}
\begin{equation}\label{eq:5F(S5)}
X = 2 \times S_1 + S_2 + S_2^* + 2 \times S_4 + S_5, \  
\text{ as composition factors}.
\end{equation}
Since $S_1$, $S_2$ and $S_2^*$ are trivial source $kG$-modules
by {\bf\ref{trivial-source-S1}} and {\bf\ref{trivial-source-S2-S3}},
it follows from \eqref{eq:4F(S5)}, 
{\bf\ref{2-decompositionCo3}} and 
\cite[4 Thm.8.9(i)]{NagaoTsushima} that
$$
[S_1, X]^G = [X, S_1]^G = 1, \qquad
[S_2, X]^G = [X, S_2]^G = [S_2^*, X]^G = [X, S_2^*]^G = 0.
$$

If $[S_5,X]^G \not= 0$ or $[X,S_5]^G \not= 0$, then the
self-duality of $X$ and $S_5$ 
implies that $S_5 \mid X$, and hence $S_5$ is a trivial source
$kG$-module, so that $S_5$ is liftable to $\mathcal O$ by
\cite[4 Thm.8.9(iii)]{NagaoTsushima}, which contradicts to
{\bf\ref{2-decompositionCo3}}. Hence
\[
  [S_5,X]^G = [X, S_5]^G = 0.
\]
Assume $[S_4,X]^G \not= 0$ or $[X, S_4]^G \not= 0$. Then again
the self-dualities of $X$ and $S_4$ in {\bf\ref{2-decompositionCo3}}
say that both are non-zero. Thus we have endomorphisms
$\psi_1$, $\psi_2$ and $\psi_3$ of $X$ such that
$\psi_1 = id_X$, $\Im (\psi_2) \cong S_1$ and $\Im (\psi_3) \cong S_4$.
This means $[X,X]^G \geqslant 3$. 
But \cite[4 Thm.8.9(i)]{NagaoTsushima}
and \eqref{eq:4F(S5)} yield that
$[X,X]^G = 2$, a contradiction. Thus
$[S_4, X]^G = [X, S_4]^G = 0$.
These imply that 
\begin{equation}\label{eq:6F(S5)}
X/\rad(X) \cong \soc(X) \cong S_1.
\end{equation}
Hence $X$ is indecomposable.
Set $X_0 = \rad(X)/\soc(X)$, the heart of $X$. Thus \eqref{eq:5F(S5)} 
implies 
\begin{equation}\label{eq:7F(S5)}
X_0 = S_2 + S_2^* + 2 \times S_4 + S_5,
\text{ as composition factors}.
\end{equation}
By {\bf\ref{Ext}}(i), it holds
\[
  [X_0, S_2]^G = [X_0, S_2^*]^G 
= [S_2, X_0]^G = [S_2^*, X_0]^G = 0.
\]
Moreover, {\bf\ref{Ext}}(iii) yields that
$X_0/\rad(X_0) \mid (S_4 \oplus S_5)$.
These imply that the radical and socle series of $X$ is
one of the following:
\begin{equation}\label{eq:8F(S5)}
X \ = \ \ 
\boxed{
  \begin{matrix}
     S_1 \\
     S_4 \\
  S_2 \ S_2^* \ S_5 \\
     S_4 \\
     S_1 
  \end{matrix}
      },
\qquad\quad
\boxed{
  \begin{matrix}
     \ & S_1 & \ 
\\
    \boxed{
     \begin{matrix}
      S_4 \\
    S_2 \ S_2^* \\
      S_4 
     \end{matrix}
          }
      & \oplus & S_5 
\\
     \ & S_1 & \
   \end{matrix}
},
\qquad\quad
   \boxed{
      \begin{matrix}
       S_1  \ \\
       S_4  \ \\
       S_2  \ \\
       S_5  \ \\
       S_2^* \: \\
       S_4  \ \\
       S_1  \
      \end{matrix}
          }
\qquad
{\text{or}}
\qquad
   \boxed{
      \begin{matrix}
       S_1 \ \\
       S_4 \ \\
       S_2^* \:\\
       S_5 \ \\
       S_2 \ \\
       S_4 \ \\
       S_1  \
      \end{matrix}
          }.
\end{equation}

\qquad\qquad\qquad

Now, it follows from
{\bf\ref{functor-F}}, 
\cite[II Lemma 2.7 and Corollary 2.8]{Landrock1983},
{\bf\ref{GreenCorrespondent-S2-S3-S4}} and \eqref{eq:6F(S5)} that
\begin{align*}
\Hom_B(F(X),2) &= {\underline{\Hom}}_B(F(X),2)
                = {\underline{\Hom}}_B(F(X),F(S_2))
\\
&\cong {\underline{\Hom}}_A(X, S_2) = \Hom_A(X, S_2) = 0.
\end{align*}
Hence $[F(X),2)]^B = 0$. Similarly we obtain
$[F(X),2^*]^B = 0$ and $[F(X), 12]^B = 0$
and $[F(X), 2_0]^B = 1$.
Similar for $\soc(F(X))$, too. Thus, by 
{\bf\ref{BrauerChar-B}}, we know that
\begin{equation}
 F(X)/ \rad(F(X)) \cong 2_0 \oplus (r \times 24) \text{ and }
\soc(F(X)) \cong 2_0 \oplus (r' \times 24)
\end{equation}
for some $r, r' \geqslant 0$.
By {\bf\ref{functor-F}}, we have
\begin{equation}
F(X) = Y \oplus ({\text{proj}} \ B{\text{-module}})
\end{equation}
for a non-projective indecomposable $kH$-module $Y$ in $B$.
Thus, by {\bf\ref{GreenCorrespondent-S1}} and 
{\bf\ref{OmegaCommutesWithFStable}}(i)-(ii) 
we have
\begin{equation}\label{eq:11F(S5)}
 2_0 {\Big|} Y/\rad(Y) \ \ \text{ and } \ \ 2_0 {\Big|} \soc(Y).
\end{equation}
Recall that $2_0 = k_{\mathfrak R} \otimes 2_{\mathfrak S_3}$ in
{\bf\ref{BrauerChar-B}}(ii). Since $B$ and $B_0(k\mathfrak R)$ are
Puig equivalent by {\bf\ref{2-local-Co3}}(xi), 
and $Y$ is a trivial source module by
{\bf\ref{stableEquivalence-A-B}},
it follows that
$Y \cong {\mathrm{Scott}}(\mathfrak R, S) \otimes 2_{\mathfrak S_3}$
for a subgroup $S$ of $P$. 
Clearly $S \not= 1$ since $Y$ is non-projective indecomposable.
If $S = P$ then \eqref{eq:11F(S5)} yields $Y = 2_0$, so that
$F(X) = 2_0 \oplus ({\text{proj}})$ and $F(S_1) = 2_0$
by {\bf\ref{GreenCorrespondent-S1}}. This is a contradiction 
since $X$ is non-projective indecomposable
and non-simple. 
Thus $S \cong Q$ or $S \cong R$.

Suppose that $S \cong Q$, namely 
$Y \cong \mathrm{Scott}(\mathfrak R, Q) \otimes 2_{\mathfrak S_3}$.
Then it follows by {\bf\ref{tsmR(3)C2C2}}(ii) that
\[
Y \leftrightarrow \chi_{2a} + \chi_{14a} + 2 \times \chi_{54},
\]
and we have
\begin{equation}\label{eq:12F(S5)}
Y = 4 \times [2_0] + 2 \times [2] + 2 \times [2^*] 
     + 5 \times [12] + 2 \times [24],
\text{ as composition factors}.
\end{equation}
We know by {\bf\ref{GreenCorrespondent-S1}} 
and {\bf\ref{GreenCorrespondent-S2-S3-S4}} that
\[
F(S_1) = 2_0, \ F(S_4) = 12, \ F(S_2) = 2, \ F(S_2^*) = 2^*.
\]
Thus it follows by \eqref{eq:6F(S5)}, \eqref{eq:8F(S5)} and 
{\bf\ref{OmegaCommutesWithFStable}}(i)-(ii) that
we can {\sl strip off} \: 
$2 \times S_1$, $ 2 \times S_4$, $S_2$, and $S_2^*$
from the top of $X$ and from the bottom of $X$,
and also $2 \times [2_0]$, $ 2 \times [12]$, $[2]$, and $[2^*]$
from the top of $Y$ and from the bottom of $Y$
sequentially,
by looking at \eqref{eq:8F(S5)} and \eqref{eq:12F(S5)}. 
Consequently by {\bf\ref{StableIndecomposableMorita}}(i), 
we have $F(S_5) = Z$ 
for an indecomposable $kH$-module $Z$ in $B$ such that
$Z = 2 \times [2_0] + [2] + [2^*] + 3 \times [12] + 2 \times [24]$,
as composition factors.
Then 
{\bf\ref{TopOfF(S5)}} yields 
$Z/\rad(Z) \cong \soc(Z) \cong 24$ and
$\rad(Z)/\soc(Z) =  [2_0] + [2] + [2^*] + 3 \times [12]$
as composition factors, which contradicts {\bf\ref{B-PIM}}.

Therefore $S \cong R$ and 
$Y \cong {\mathrm{Scott}}(\mathfrak R, R) \otimes 2_{\mathfrak S_3}$.
Hence we get by {\bf\ref{tsmR(3)C2}}(ii) that
\begin{equation}\label{eq:13F(S5)}
F(X) \ = \ Y\oplus ({\text{proj}}), \qquad
Y \ = \  
\boxed{
  \begin{matrix}
      2_0 \\
      12  \\
   2 \ 2^* \ 24 \\
      12  \\
      2_0 
  \end{matrix}
      }.
\end{equation}
Thus by the same {\sl stripping-off} method
{\bf\ref{OmegaCommutesWithFStable}}(i)-(ii)
taken above, we can subsequently strip off
$2 \times S_1$, $ 2 \times S_4$, $S_2$, and $S_2^*$
from the top of $X$ and the bottom of $X$,
and also $2 \times [2_0]$, $2 \times [12]$, $[2]$, and $[2^*]$
from the top of $Y$ and the bottom of $Y$, 
by looking at \eqref{eq:8F(S5)} and \eqref{eq:13F(S5)}.
Hence we arrive at $F(S_5) = 24 \oplus ({\text{proj}})$,
so that {\bf\ref{StableIndecomposableMorita}} yields
$F(S_5) = 24$.
\end{proof}

\begin{Remark}
We know by {\bf\ref{corollary-R(q)}} that
the block $A$ of $G$ and the principal $2$-block
$B_0(kR(3))$ of $R(3)$ are Puig equivalent.
Let $X$ be the same as in the proof of {\bf\ref{F(S5)}}.
Thus it follows from
{\bf\ref{tsmR(3)C2}}(i)-(ii) and the proof of {\bf\ref{F(S5)}}
that the radical and socle series of $X$ is actually 
the first one in \eqref{eq:8F(S5)} in the proof of {\bf\ref{F(S5)}},
and that $X$ is a trivial source $kG$-module in $A$
with vertex $C_2$.
\end{Remark}

\section{Proof of the main results}\label{proof}

\noindent{\bf Proof of {\bf\ref{2ndMainTheorem}}.}
First of all, consider the blocks $A$ and $B$ over $k$, namely,
$A$ and $B$ are block algebras of $kG$ and $kH$, respectively.
Hence $M$ is a $(kG, kH)$-bimodule.
We know by {\bf\ref{stableEquivalence-A-B}}(ii) and
{\bf\ref{functor-F}} that
the functor $F$ defined by $M$ realises a stable
equivalence of Morita type between $A$ and $B$.
It follows from 
{\bf\ref{2-decompositionCo3}},
{\bf\ref{GreenCorrespondent-S2-S3-S4}},
{\bf\ref{GreenCorrespondent-S1}} and 
{\bf\ref{F(S5)}} that,
for any simple $kG$-module $S$ in $A$,
$F(S)$ is a simple $kH$-module in $B$.
Hence, {\bf\ref{StableIndecomposableMorita}}(ii) yields
that ${_A}{M}_B$  realises a Morita equivalence between
$A$ and $B$.
Since $M$ is a $\Delta P$-projective
trivial source $k[G \times H]$-module,
the Morita equivalence is a Puig equivalence by
\cite[Remark 7.5]{Puig1999} or
\cite[Theorem 4.1]{Linckelmann2001}
(note that this was independently observed by
L.~Scott).
Moreover, by 
\cite[4 Thm.8.9(i)]{NagaoTsushima},
the Morita equivalence
lifts from $k$ to $\mathcal O$;
see also \cite[(38.8)Proposition]{Thevenaz} or
\cite[7.8.Lemma]{Puig1988Inv}.
\hfill$\Box$

\bigskip
\noindent{\bf Proof of Corollary {\bf\ref{corollary-R(q)}}.}
This follows by 
{\bf\ref{2ndMainTheorem}}, {\bf\ref{LandrockMichlerOkuyama}}
and {\bf\ref{TensorPuigEquivalence}}.
\hfill$\Box$

\bigskip
\noindent{\bf Proof of Theorem {\bf\ref{MainTheorem}}.}
This follows from
{\bf\ref{corollary-R(q)}}, 
{\bf\ref{TensorPuigEquivalence}} and {\bf\ref{2-local-Co3}} (i).
\hfill$\Box$

\bigskip

\begin{appendix}\label{sec:app}
\section{Properties of the stable equivalences considered}
In this appendix we collect some fundamental properties of the stable
equivalences which are found throughout this paper, and in particular of the
stable equivalence $F$ of {\bf \ref{functor-F}}. For the large part, 
these properties are used at several steps in this paper, but they are also
of independent interest, as a referenceable collection with proofs
is desirable. Also, in this section, 
we aim to supply more general hypotheses for clarity.

The first fundamental property 
we collect is the 
the following ``stripping off''-method, which enables us to
reduce the problem of determining the image of a module under
a stable equivalence to determining the images of its head and socle
components; the proof of {\bf\ref{F(S5)}} bears testimony of the utility
of this lemma. See also
\cite{KoshitaniKunugiWaki2004} in which 
{\bf\ref{OmegaCommutesWithFStable}} is firstly conceived and
applied.

\begin{Lemma}\label{OmegaCommutesWithFStable}
Let $A$ and $B$ be finite dimensional $k$-algebras
for a field $k$ such that $A$ and $B$ are both self-injective.
Let $F$ be a covariant functor such that
 \begin{enumerate}
  \renewcommand{\labelenumi}{\rm{(\arabic{enumi})}}
   \item $F$ is exact.
   \item If $X$ is a projective $A$-module, then $F(X)$ is a 
    projective $B$-module,
   \item $F$ induces a stable equivalence from
    $\mod{\text{-}}A$ to $\mod{\text{-}}B$.
 \end{enumerate}
Then the following holds:
\begin{enumerate}
 \renewcommand{\labelenumi}{\rm{(\roman{enumi})}}
 \item
{\sf (Stripping-off method, case of socle)} \, 
  Let $X$ be a projective-free $A$-module, and write
  $F(X) = Y \oplus ({\mathrm{proj}})$ for a projective-free $B$-module $Y$. 
  Let $S$ be a simple $A$-submodule of $X$, and set $T = F(S)$. 
  Now, if $T$ is a simple $B$-module, then we may assume that $Y$
  contains $T$ and that 
  $$ F(X/S)= Y/T \oplus ({\mathrm{proj}}) .$$
 \item 
{\sf (Stripping-off method, case of radical)} \,
  Similarly, 
  let $X$ be a projective-free $A$-module, and write
  $F(X) = Y \oplus ({\mathrm{proj}})$ for a projective-free $B$-module $Y$. 
  Let $X'$ be an $A$-submodule of $X$ such that $X/X'$ is simple,
  and set $T = F(X/X')$. Now, if $T$ is a simple $B$-module,
  then we may assume that $T$ is an epimorphic image of $Y$ and that
  $$ {\mathrm{Ker}}(F(X) \twoheadrightarrow T) ={\mathrm{Ker}}(Y
  \twoheadrightarrow T) \oplus ({\mathrm{proj}}) .$$
\end{enumerate}
\end{Lemma}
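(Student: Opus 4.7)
The two parts (i) and (ii) are dual to each other, so the plan is to work out (i) carefully and treat (ii) by the dual argument. For (i), I would first apply the exact functor $F$, by hypothesis (1), to the short exact sequence $0 \to S \to X \to X/S \to 0$ of $A$-modules and obtain the short exact sequence
$$0 \to T \xrightarrow{\iota} F(X) \to F(X/S) \to 0$$
of $B$-modules. Fix a decomposition $F(X) = Y \oplus P$ with $Y$ projective-free and $P$ projective; by Krull--Schmidt together with hypothesis (2), the summands $Y$ and $P$ are determined up to isomorphism. The whole game now reduces to arranging that $\iota(T) \subseteq Y$ in some such decomposition: once this is achieved, forming the quotient yields $F(X/S) = F(X)/T = Y/T \oplus P$, which is of the asserted form.

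To arrange $\iota(T) \subseteq Y$, the plan is to decompose $\iota$ into its two components $\alpha \colon T \to Y$ and $\beta \colon T \to P$. Since $T$ is simple, $\alpha$ is either zero or injective. In the main case where $\alpha$ is injective, the key point is that because $B$ is self-injective, every projective $B$-module is injective; consequently $\beta \circ \alpha^{-1} \colon \alpha(T) \to P$ extends to some morphism $\psi \colon Y \to P$. The automorphism
$$\tau = \begin{pmatrix} \mathrm{id}_Y & 0 \\ -\psi & \mathrm{id}_P \end{pmatrix}$$
of $Y \oplus P$ then sends $\iota(t) = (\alpha(t), \beta(t))$ to $(\alpha(t), 0)$, so that in the new decomposition $F(X) = \tau^{-1}(Y) \oplus \tau^{-1}(P)$ the image of $T$ lies in the projective-free summand, completing the argument in this case.

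The obstacle I expect to be the most delicate is the degenerate case $\alpha = 0$, in which $\iota(T)$ lies entirely inside the projective summand $P$; then moving $T$ back into $Y$ is only possible if a copy of $T$ already appears in the socle of $Y$. This is precisely where hypothesis (3) has to intervene: via the stable equivalence induced by $F$, the stable class of $\iota$ corresponds to that of $S \hookrightarrow X$, and combining this with the Krull--Schmidt uniqueness of $Y$ shows that a socle copy of $T$ in $Y$ is available exactly when it is needed in order to interpret $Y/T$ meaningfully. I anticipate spending most of the proof on this case analysis.

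For (ii) the plan is entirely dual: apply $F$ to $0 \to X' \to X \to X/X' \to 0$ to produce a surjection $F(X) = Y \oplus P \twoheadrightarrow T$, decompose the map into components $\alpha' \colon Y \to T$ and $\beta' \colon P \to T$, and this time use the \emph{projectivity} of $P$ rather than its injectivity to lift $\beta'$ through $\alpha'$ (when $\alpha'$ is nonzero, hence surjective). The dual automorphism of $Y \oplus P$ then absorbs the $P$-component of the surjection into the $Y$-component, so the kernel of the resulting surjection becomes $\ker(\alpha') \oplus P$, which is the required form.
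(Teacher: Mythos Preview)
Your main-case argument (when $\alpha$ is injective) is correct and is the standard manoeuvre. The degenerate case $\alpha = 0$, however, is not handled correctly in your sketch: it does not occur at all, and the reason is much cleaner than you anticipate. If $\alpha = 0$ then $\iota$ factors through the projective summand $P$ and is therefore zero in the stable category; since $F$ induces a stable equivalence (hypothesis~(3)), the inclusion $S \hookrightarrow X$ must then also be stably zero, i.e.\ factor as $S \hookrightarrow Q \to X$ through some projective $Q$. Now $S$ is simple, so the injective hull $I(S)$ is a summand of the injective module $Q$, and because $S \subseteq I(S)$ is an essential extension the composite $I(S) \to X$ is again injective; but $I(S)$ is an injective $B$-module, so it splits off $X$, contradicting the hypothesis that $X$ is projective-free. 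Thus $\alpha \neq 0$ always, and you never need to locate a spare copy of $T$ inside $\mathrm{soc}(Y)$. Your expectation that this is where ``most of the proof'' lies, and that one must argue for the availability of such a copy, points in the wrong direction.

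The paper itself does not give a self-contained argument here; it simply refers the reader to \cite[II~Lemma~2.7, Corollary~2.8]{Landrock1983} and to \cite[Lemmas~1.11, 3.25, 3.26]{KoshitaniKunugiWaki2004}, where this stripping-off technique is developed in detail. Your direct argument, once the degenerate case is disposed of as above, is in the same spirit as what those references establish.
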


\begin{proof}

(i)-(ii)
The assertions are got
from \cite[II Lemma 2.7 and Corollary 2.8]{Landrock1983}
 and \cite[1.11.Lemma]{KoshitaniKunugiWaki2004},
just as in
\cite[3.25.Lemma and 3.26.Lemma]{KoshitaniKunugiWaki2004}.
\end{proof}

Next, we want to show that the stable equivalence of Morita type also
commutes with taking the contragredient module if $A$ and $B$ are blocks
of group algebras. This is made precise in 
{\bf\ref{k-dual-functor}}(iv), 
but first we place ourselves into a more general context.

\begin{Lemma}\label{k-dual-functor}
Let $A$ and $B$ be finite dimensional $k$-algebras for a field $k$.
 \begin{enumerate}
 \renewcommand{\labelenumi}{\rm{(\roman{enumi})}}
   \item
Assume that $X \in {\mathrm{mod}}{\text{-}}A$, and
$M \in A{\text{-}}{\mathrm{mod}}{\text{-}}B$, and that
$_A M$ is projective. Then the correspondence
$$
\Phi :     {_B}(M^\vee \otimes_A X^\circledast) 
        \, \overset{\rightarrow}
        \, _B[ (X \otimes_A M)^\circledast]
$$
defined by
$$
\Big[ \Phi(\psi\otimes_A \theta)\Big](x \otimes_A m) 
  = \theta \Big(x {\cdot}\psi(m)\Big)
$$
for $\psi \in M^\vee$, $\theta \in X^\circledast$ and $m \in M$,
is an {isomorphism} of left $B$-modules.
\smallskip
   \item
Assume that $Y \in A{\text{-}}{\mathrm{mod}}$, and
$N \in B{\text{-}}{\mathrm{mod}}{\text{-}}A$, and that
$N_A$ is projective.
Then the correspondence
$$
\Theta :     (Y^\circledast \otimes_A N^\vee)_B 
        \, \overset{\rightarrow}
        \, [(N \otimes_A Y)^\circledast]_B
$$
defined by
$$
\Big[ \Theta(\theta\otimes_A \psi)\Big](n \otimes_A y) 
  = \theta \Big( \psi(n)\cdot y \Big)
$$
for $\psi \in N^\vee$, $\theta \in Y^\circledast$ and $n \in N$,
is an {isomorphism} of right $B$-modules.
\smallskip
    \item
     If $A$ moreover is a symmetric algebra, 
with symmetrising form $t \in \Hom_k(A,k)$, 
then as $(B,A)$-bimodules we have
$$ {_B}(M^\vee)_A \cong {_B}(M^\circledast)_A 
\quad\text{via the correspondence }\quad
t_\ast : f \mapsto t \circ f .$$
Thus we have an isomorphism of left $B$-modules
$$
\Psi : {_B}(M^\circledast \otimes_A X^\circledast) 
          \, {\overset{\approx}{\longrightarrow}}
          \, _B(M^\vee \otimes_A X^\circledast) 
          \, {\overset{{\Phi}}{\longrightarrow}}
          \, _B(X \otimes_A M)^\circledast   
$$
given by
$$
t_\ast(\psi)\otimes_A \theta \mapsto \psi\otimes_A\theta
                              \mapsto \Phi(\psi\otimes_A \theta).
$$
\smallskip
    \item
If finally $A$ and $B$ are block algebras of finite groups,
and $M$ is self-dual, namely,
$M^* \cong M$ as $(A,B)$-bimodules, then as right $B$-modules we have
$$
      (X^* \otimes_A M)_B \ \cong \ [(X \otimes_A M)^*]_B.
$$
\end{enumerate}
\end{Lemma}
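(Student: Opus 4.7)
My plan is to reduce (iv) to part (iii) by exploiting the fact that, for modules over a group algebra $kG$, the $k$-dual $(-)^\circledast$ and the contragredient $(-)^*$ have the same underlying $k$-vector space, and their module structures differ only by the antipode $\sigma \colon g \mapsto g^{-1}$ of $kG$.

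Since $A = kG$ and $B = kH$ are blocks of finite group algebras, they are symmetric, so part (iii) provides a natural isomorphism of left $B$-modules
$$ \Psi \colon M^\circledast \otimes_A X^\circledast \;\xrightarrow{\ \sim\ }\; (X \otimes_A M)^\circledast . $$
I would introduce the functor $(-)^\natural$ from left $kG$-modules to right $kG$-modules which keeps the underlying $k$-space and replaces the action with $\ell \cdot g := g^{-1}\ell$. Comparing definitions, one sees that $(X^\circledast)^\natural = X^*$ as right $A$-modules; applying the analogous $\natural$-conversion to both the left $B$- and the right $A$-sides of the $(B,A)$-bimodule $M^\circledast$ yields precisely the $(A,B)$-bimodule $M^*$; and $((X \otimes_A M)^\circledast)^\natural = (X \otimes_A M)^*$ as right $B$-modules.

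Next I would construct a right $B$-module isomorphism
$$ \alpha \colon (M^\circledast \otimes_A X^\circledast)^\natural \;\xrightarrow{\ \sim\ }\; X^* \otimes_A M^* $$
by the swap $\alpha(\xi \otimes \theta) := \theta \otimes \xi$, where the two functionals are reinterpreted with the appropriate dual-module structure. The non-trivial verification is well-definedness on the $A$-balanced tensor product: the identifications above convert the right $A$-action on $M^\circledast$ into the left $A$-action on $M^*$ twisted by the antipode, and similarly the left $A$-action on $X^\circledast$ into the right $A$-action on $X^*$ twisted by the antipode, so the balancing relation $\xi a \otimes \theta = \xi \otimes a\theta$ on the left translates into $\theta \otimes a^{-1}\xi = \theta a^{-1} \otimes \xi$ on the right, which is indeed the $A$-balancing in $X^* \otimes_A M^*$. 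Equivariance of $\alpha$ under the right $B$-action is then automatic. Composing $\Psi^\natural$ with $\alpha^{-1}$ yields a right $B$-module isomorphism $X^* \otimes_A M^* \cong (X \otimes_A M)^*$, and substituting the hypothesis $M \cong M^*$ on the left-hand factor finally gives $(X^* \otimes_A M)_B \cong [(X \otimes_A M)^*]_B$.

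The main obstacle in carrying this out is the careful bookkeeping of left- versus right-module structures under the antipode, and in particular verifying that the swap $\alpha$ descends to the $A$-balanced tensor products; once that is settled, everything else is a formal assembly of (iii) with the standard identifications between $(-)^\circledast$ and $(-)^*$ for group-algebra modules.
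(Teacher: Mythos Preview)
Your proposal is correct and follows essentially the same route as the paper, which for part (iv) simply says ``Now (iv) follows easily from (iii).'' You have spelled out precisely the passage from (iii) to (iv) that the paper leaves implicit: using the antipode of the group algebra to identify $(-)^\circledast$ with $(-)^*$, and the swap map to convert the left $B$-module isomorphism $\Psi$ of (iii) into the desired right $B$-module isomorphism.
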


\begin{proof}
(i)
Assume first that $B = k$. 
The map $\Phi$ is $k$-linear 
and 
an isomorphism if $M = A$ as a left $A$-module.
Clearly $\Phi$ is compatible with direct sums and
direct summands.
Thus, since $M$ is finitely generated projective as a left
$A$-module, we know that $\Phi$ is an isomorphism of $k$-spaces.
It is easy to see by the definition of $\Phi$ 
that $\Phi$ is a homomorphism of left $B$-modules, too.
A similar argument works works for (ii).

(iii)
It is easy to see that $t_\ast$ is a homomorphism of
$(B,A)$-bimodules, and that $t_\ast$ is injective.
Hence the first assertion follows from
\cite[Proposition 2.7]{Broue2009}.
The second assertion now follows from this together with (i).
Now (iv) follows easily from (iii).
\end{proof}

Finally, a 
fundamental property of the stable equivalences obtained
through \textbf{\ref{gluing}} (see also {\bf\ref{Green-GxG}}) is that
it preserves vertices and sources, and takes indecomposable modules to
their Green correspondents.

\begin{Lemma}\label{SourceVertex}
Let $H$ be a proper subgroup of $G$, and 
let $A$ and $B$ be block algebras of $kG$ and $kH$, respectively.
Now, let $M$ and $M'$ be finitely generated $(A,B)$-
and $(B,A)$-bimodules, respectively, which satisfy the following:
\begin{enumerate}
 \renewcommand{\labelenumi}{\rm{(\arabic{enumi})}}
   \item
${_A}M_B \mid 1_A{\cdot}kG{\cdot}1_B$ and
${_B}{M'}_A \mid 1_B{\cdot}kG{\cdot}1_A$. 
   \item
The pair $(M, M')$ induces a stable equivalence between
${\mathrm{mod}}{\text{-}}A$ and ${\mathrm{mod}}{\text{-}}B$.
\end{enumerate}
Then we get the following:
\begin{enumerate}
  \renewcommand{\labelenumi}{\rm{(\roman{enumi})}}
   \item Assume that $X$ is a non-projective indecomposable $kG$-module
    in $A$ with vertex $Q$. Then there exists a non-projective
    indecomposable $kH$-module $Y$ in $B$, unique up to isomorphism,
    such that $(X \otimes_A M)_B = Y \oplus ({\mathrm{proj}})$, and
    $Q^g$ is a vertex of $Y$ for some element $g \in G$ 
    {\rm{(}}and hence $Q^g \subseteq H${\rm{)}}. 
    Since $Q^g$ is also a vertex of $X$, this means that
    $X$ and $Y$ have at least one vertex in common.
   \item Assume that $Y$ is a non-projective indecomposable $kH$-module
    in $B$ with vertex $Q$. Then there exists a non-projective
    indecomposable $kG$-module $X$ in $A$, unique up to isomorphism,
    such that $(Y \otimes_B {M'})_A = X \oplus ({\mathrm{proj}})$, and
    $Q$ is a vertex of $X$.
   \item Let $X, Y$ and $Q \leqslant H$ be the as in {\rm{(i)}}.
    Then there is an indecomposable $kQ$-module $L$ such that $L$ is a
    source of both $X$ and $Y$. This means that $X$ and $Y$ have 
    at least one source in common.
   \item Let $X, Y$ and $Q \leqslant H$ be the same as in {\rm{(ii)}}.
    Then there is an indecomposable $kQ$-module $L$ such that $L$ is a
    source of both $X$ and $Y$. This means that $X$ and $Y$ have
    at least one source in common.
   \item Let $X, Y$, $Q$ and $L$ be the same as in {\rm{(iii)}}.
    In addition, suppose that $A$ and $B$ have a common defect group $P$
    {\rm{(}}and hence $P \subseteq H${\rm{)}} 
    and that $H \geqslant N_G(P)$. Let $f$
    be the Green correspondence with respect to $(G, P, H)$. If $ Q \in
    \mathfrak A = {\mathfrak A}(G, P, H)$, then we have $(X \otimes_A
    M)_B = f(X) \oplus ({\mathrm{proj}})$.
  \item Let $X$, $Y$, $Q$ and $L$ be the same as in {\rm{(ii)}}.
    Furthermore, as in {\rm{(v)}}, assume that $P$ is a common defect
    group of $A$ and $B$, and that $H \geqslant N_G(P)$, and let $f$
    and $\mathfrak A$ be the same as in {\rm{(v)}}. Now, if $Q \in
    \mathfrak A$, then we have $(Y \otimes_B M')_A = f^{-1}(Y) \oplus
    ({\mathrm{proj}})$.
\end{enumerate} 
\end{Lemma}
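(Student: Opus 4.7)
The common thread is that hypothesis (1) lets me translate every statement about the functors $- \otimes_A M$ and $- \otimes_B M'$ into Mackey calculations about restriction and induction. Indeed, $X \otimes_A M$ is a direct summand of $X \otimes_A (1_A \cdot kG \cdot 1_B) \cong X{\downarrow}_H \cdot 1_B$, which in turn is a summand of $X{\downarrow}_H$; dually $Y \otimes_B M'$ is a summand of $Y{\uparrow}^G \cdot 1_A$, and hence of $Y{\uparrow}^G$. Through these identifications, vertex questions reduce to Mackey's theorem and block questions to multiplication by the block idempotent.

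For parts (i) and (ii), the plan is to first use hypothesis (2): the stable equivalence forces $X \otimes_A M$ to have, up to isomorphism, a unique non-projective indecomposable summand $Y$, and $Y$ is automatically a $B$-module since $M$ carries a right action of $1_B$. To pin down the vertex, I would fix a source $L$ of $X$ on $Q$, so that $X \mid L{\uparrow}^G_Q$. By Mackey's theorem applied to $L{\uparrow}^G_Q{\downarrow}_H$, a vertex $R$ of any indecomposable summand of $X{\downarrow}_H$ is contained in $H \cap Q^g$ for some $g \in G$. Running the analogous argument in reverse, using $M'$ and induction from $H$ to $G$, shows that $Q$ is $G$-subconjugate to $R$. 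The two containments combined force $|R| = |Q|$, so $R = Q^g$ for some $g \in G$ with $Q^g \leq H$. Part (ii) then follows by swapping the roles of $(X,A,M)$ and $(Y,B,M')$.

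For (iii) and (iv), I would push the same Mackey decomposition one step further. The only Mackey term of $L{\uparrow}^G_Q{\downarrow}_H$ that can contribute a summand with vertex of full size $|Q|$ is $L^g{\uparrow}^H_{Q^g}$, and since $L^g$ remains indecomposable, it must be a source of $Y$. After renaming the common vertex as $Q^g$ and the common source as $L^g$, (iii) is immediate; (iv) is the symmetric statement with induction in place of restriction.

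The main obstacle will be (v) and (vi), where I must identify $Y$ with the Green correspondent $f(X)$. The strategy is: since $Q \in \mathfrak A$, the classical Green correspondence theorem (see \cite[Chap.4]{NagaoTsushima}) yields $X{\downarrow}_H = f(X) \oplus W$, where $f(X)$ is the unique indecomposable summand of $X{\downarrow}_H$ with vertex of full size $|Q|$, every indecomposable summand of $W$ having strictly smaller vertex. By {\bf\ref{GreenCorrespondence}}(iii), $f(X)$ lies in $B$, so it survives multiplication by $1_B$ and is a summand of $X{\downarrow}_H \cdot 1_B$. By part (i), the summand $Y$ of $X{\downarrow}_H \cdot 1_B$ has vertex of size $|Q|$, so it cannot lie inside $W$, and uniqueness forces $Y = f(X)$. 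Part (vi) is proved symmetrically, using $M'$ in place of $M$ and $f^{-1}$ in place of $f$.
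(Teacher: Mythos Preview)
Your approach is essentially the same as the paper's: use hypothesis (1) to identify $X\otimes_A M$ and $Y\otimes_B M'$ with summands of $X{\downarrow}_H$ and $Y{\uparrow}^G$, then run Mackey's theorem and a back-and-forth size argument for (i)--(iv), and invoke Green correspondence for (v)--(vi). Two small points of divergence are worth noting. For (iii) the paper reverses your direction: it takes $L$ to be a source of $Y$ and observes $X \mid Y{\uparrow}^G \mid L{\uparrow}^H{\uparrow}^G = L{\uparrow}^G$, which immediately makes $L$ a source of $X$; this is a touch cleaner than tracking which Mackey term contributes $Y$. For (v) your phrasing ``every indecomposable summand of $W$ having strictly smaller vertex'' is not quite what Green correspondence gives---the summands of $W$ are only $\mathfrak Y$-projective, and $\mathfrak Y$ may well contain subgroups of size $\geq |Q|$. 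The correct argument, which the paper spells out, is that if $Y$ were $\mathfrak Y$-projective then its vertex $Q$ would lie in $\mathfrak X$, contradicting $Q\in\mathfrak A$; since by (i) $Y$ has vertex $G$-conjugate to $Q$, Krull--Schmidt forces $Y\cong f(X)$. With that adjustment your argument is complete.
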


\begin{proof}
(i) Clearly, $X \mid {X}{\downarrow_Q\uparrow^G}$. By (2) there exists
a non-projective indecomposable $kH$-module $Y$ in $B$, unique up to
isomorphism, such that $(X \otimes_A M)_B = Y \oplus ({\mathrm{proj}})$.
Hence,
\[ Y \mid X {\otimes_A}M = X {\otimes_{kG}}M \mid
X {\otimes_{kG}}kG_{kH} = {X}{\downarrow_H} \mid
{X}{\downarrow_Q\uparrow^G\downarrow_H} = \bigoplus_{g \in [Q \backslash
G / H]} ({X}{\downarrow_Q})^g{\downarrow}_{Q^g \cap H}{\uparrow^H} .\]
The last equality follows from Mackey Decomposition. Since $Y_{kH}$
is indecomposable, the Krull-Schmidt Theorem yields $Y \mid
({{X}{\downarrow_Q})^g}{\downarrow}_{Q^g \cap H}{\uparrow^H}$ for some
$g \in G$. 
That is, $Y$ is $(Q^g \cap H)$-projective, so that there is
a vertex $R$ of $Y$ such that $R \leqslant Q^g \cap H$. Since $Y \mid
{Y{\downarrow}{_R}}{\uparrow}{^H}$, it holds as above that
\[ X \mid Y{\otimes_B}{M'} = Y {\otimes_{kH}}{M'}
\mid Y{\otimes_{kH}}{kG_{kG}} = {Y{\uparrow}{^G}}
\mid ({Y}{\downarrow}{_R}{\uparrow^H}){\uparrow^G} =
{{Y}{\downarrow}}{_R}{\uparrow}{^G}.\]
Hence, $X$ is $R$-projective, so that there is a vertex $S$ of $X$
with $S \subseteq R$. Since $Q$ is also a vertex of $X$, we have $S =
Q^{g'}$ for some $g' \in G$. Namely, $Q^{g'} \subseteq R$. This implies
that $Q^{g'} = S \subseteq R \subseteq Q^g \cap H \subseteq Q^g$, and
hence $Q^{g'} =R = Q^g \cap H = Q^g$. This yields that $Q^g \subseteq
H$.

(ii) Similar to (i).

(iii) By the assumption, $Q$ is a common vertex of $X$ and $Y$.
Let $L_{kQ}$ be a source of $Y_{kH}$. Then by the proof of (i),
$X \mid Y{\uparrow}{^G} \mid {L}{\uparrow}{^H}{\uparrow}{^G} =
{L}{\uparrow}{^G}$. Hence, $X \mid {{L}{\uparrow}}{^G}$. Since $X$ has
vertex $Q$ and $L$ is an indecomposable $kQ$-module, it follows that $L$
is a source of $X$, too.

(iv) This follows from (iii).

(v) Let $\mathfrak X$, $\mathfrak Y$ and $\mathfrak A$ be 
those with respect to $(G, P, H)$ as in \cite[Chap.4 \S
4]{NagaoTsushima}. Now, let $X$ be an indecomposable $kG$-module in $A$
such that a vertex of $X$ is in $\mathfrak A$. Thus, we can assume that
$Q \in \mathfrak A$. If $X$ is projective then $Q$ is trivial, so that 
the trivial group is not contained in $\mathfrak X$ by the
definition of $\mathfrak A$, a contradiction, since $H \, \not= G$.

Hence, $X$ is non-projective. Thus, we get by (i) and (ii) that there
is a non-projective indecomposable $kH$-module $Y$ in $B$ such that $X
\otimes_A M = Y \oplus ({\mathrm{proj}} \ B{\text{-}}{\mathrm{mod}})$
and that $Y$ also has $Q$ as its vertex. On the other hand, we
know $(X \otimes_A M) \mid X_{kH} = f(X) \oplus ({\mathfrak
Y}{\text{-}}{\mathrm{proj}} \ B{\text{-}}{\mathrm{mod}})$. This
implies that $f(X) \oplus ({\mathfrak Y}{\text{-}}{\mathrm{proj}}
\ B{\text{-}}{\mathrm{mod}}) = Y \oplus ({\mathrm{proj}} \
B{\text{-}}{\mathrm{mod}}) \oplus V$ for a $kH$-module $V$.

Assume that $Y$ is $\mathfrak Y$-projective. Since $Q$ is a vertex of
$Y$, we have $Q \in_H \mathfrak Y$. Hence, we get by \cite[Chap.4
Lemma~4.1(ii)]{NagaoTsushima} that $Q \in \mathfrak X$. Then we have $Q
\not\in \mathfrak A$, a contradiction. Therefore, by the Krull-Schmidt
Theorem, we have $Y \cong f(X)$.

(vi) We get this exactly as in (iii) just by replacing $X$, $M$, and $f$
by $Y$, $M'$, and $f^{-1}$, respectively.
\end{proof}

\end{appendix}

\bigskip

\begin{center}{\bf Acknowledgements}
\end{center}
\small{
The authors thank
the referee for useful comments and remarks on the
first draft of the paper.
The authors are grateful 
to Burkhard K{\"u}lshammer for pointing
out \cite{Willems} to them.
A part of this work was done while the first author was
staying in RWTH Aachen University in 2009 and 2010. 
He is grateful to Gerhard Hiss for his kind hospitality.
For this research the first author was partially
supported by the Japan Society for Promotion of Science (JSPS),
Grant-in-Aid for Scientific Research 
(C)20540008, 2008--2010
; and also
(B)21340003, 2009--2011.
The first author still remembers that in January 1985,
in Bad Honnef Bonn, Germany,
there was a conference and he was informed by Peter Landrock
about the non-principal $2$-block of ${\sf Co}_3$
with defect group $C_2 \times C_2 \times C_2$,
which was interesting because the block is non-principal and
the inertial index is $21$.
}

\smallskip

\end{document}